\title{Interpolation and sampling sequences\\
       for mixed-norm spaces\thanks{Sections~1--3 of this paper are
       taken from the Ph.D. dissertation of the first author \cite{PKN}
       under the direction of the second author. Section~4 on sampling was
       completed later.}}
\author{Phuc K. Nguyen \and Daniel H. Luecking}
\date{January 23, 2018}
\renewcommand\section{\@startsection {section}{1}{\z@}%
                                   {-3.5ex \@plus -1ex \@minus -.2ex}%
                                   {2.3ex \@plus.2ex}%
                                   {\normalfont\bfseries}}
\renewcommand\subsection{\@startsection{subsection}{2}{\z@}%
                                     {-3.25ex\@plus -1ex \@minus -.2ex}%
                                     {1.5ex \@plus .2ex}%
                                     {\normalfont\scshape}}
\let\tu\textup
\newtheorem{theorem}{Theorem}
\newtheorem{lemma}[theorem]{Lemma}
\newtheorem{proposition}[theorem]{Proposition}
\newtheorem{corollary}[theorem]{Corollary}
\theoremstyle{remark}
\newtheorem{remark}{Remark}
\newcommand*{\Z}{\mathcal{Z}}
\newcommand*{\K}{\mathcal{K}}
\newcommand*{\ID}{\mathbb{D}}
\newcommand*{\IC}{\mathbb{C}}
\newcommand*{\eps}{\epsilon}
\newcommand*\norm[1]{\left\lVert#1\right\rVert}
\newcommand*\triplenorm[1]{\left\vvvert #1\right\vvvert}
\newcommand*\W{\mathcal{W}}
\newcommand*\U{\mathcal{U}}
\newcommand*\M{\mathcal{M}}
\newcommand*\C{\mathcal{C}}
\newcommand*\clos[1]{\mkern4.5mu\overline{\mkern-4.5mu #1}}
\newcommand*\defeq{\mathrel{\mathop:}=}
\begin{document}

\maketitle

\begin{abstract}
This paper extends the known characterization of interpolation and
sampling sequences for Bergman spaces to the mixed-norm spaces. The
Bergman spaces have conformal invariance properties not shared by the
mixed-norm spaces. As a result, different techniques of proof were
required.
\end{abstract}

\section{Introduction}
\subsection{The mixed-norm spaces}

For a function $f$ analytic in the unit disc $\ID =\{z\in\IC :\:
|z|<1\}$, the integral means are defined by
\begin{equation}\label{def:integralmean}
  M_p(r,f) =
  \left[
    \frac{1}{2\pi} \int_0^{2\pi} |f(re^{i\theta})|^p \,d\theta
  \right]^{1/p},\quad 0<p<\infty,
\end{equation}
and
\begin{equation*}
  M_{\infty}(r,f) = \max_{\theta} |f(re^{i\theta})|.
\end{equation*}

For $0<p<\infty,\, 0<q<\infty$, the mixed-norm space $A(p,q)$ is the set
of functions $f$ analytic in $\ID $ with
\begin{equation}\label{def:mixednorm}
 \begin{split}
 \norm{f}_{L(p,q)}
    &= \left[ \int_0^1 M_p(r,f)^q 2r\,dr
        \right]^{1/q} \\
    &= \left[ \int_0^1
          \left( \frac{1}{2\pi}
            \int_0^{2\pi} |f(re^{i\theta})|^p \,d\theta
          \right)^{q/p} 2r\,dr
        \right]^{1/q} <\infty.
 \end{split}
\end{equation}

If $f\in A(p,q)$, we write $\norm{f}_{A(p,q)}$ for $\norm{f}_{L(p,q)}$.

If $p=q$, $A(p,q)$ is the Bergman space $A^p$:
\begin{equation}\label{def:bergmanspace}
  A^p =
    \left\{ f \text{ analytic in } \ID :
    \norm{f}_{A^p} = \left( \int_{\ID } |f(z)|^p \,dA(z)
    \right)^{1/p} < \infty \right\}.
\end{equation}

For $0<p,q<\infty$, $A(p,q)$ are invariant complete metric spaces with
the metric
\begin{equation}
  d(f,g) = \norm{f-g}_{A(p,q)}^s, \text{  where } s=\min(p,q,1).
\end{equation}
If $1\le p$ and $1\le q$, $\norm{.}_{A(p,q)}$ is a norm and
$(A(p,q),\norm{.})$ becomes a Banach space.

We are also interested in two other spaces which are related to the
mixed-norm space. The first one is the growth space $A^{-n}\, (n>0)$,
which is the set of functions $f$ analytic in $\ID $ with
\begin{equation}\label{def:growthspace}
  \norm{ f}_{-n}
    =
    \sup_{z\in\ID } \,(1-|z|^2)^n |f(z)| < \infty.
\end{equation}
The second one is the weighted Bergman space $A_\alpha^p
\,(0<p<\infty,\alpha>-1)$, which consists of functions $f$
analytic in $\ID $ with
\begin{equation}\label{def:weightedbergmanspace}
  \norm{ f }_{p,\alpha} = \left\{
    \int_{\ID } |f(z)|^p (1-|z|^2)^{\alpha} dA(z)
    \right\}^{1/p} < \infty.
\end{equation}

Note that $A^p = A^p_{0}$. Also note that some authors use that
convention that $A^p_\alpha$ is equal to what we would call $A^p_{\alpha
p - 1}$.

\subsection{Definitions of interpolation sequences}\label{sec:definitions}

Let $A$ be a space of functions on $\Omega$, $X$ a sequence space and
$\Gamma \equiv (z_m) \subset \Omega$ a sequence that has no limit points
in $\Omega$. Denote by $R_\Gamma$ the mapping $f \mapsto (f(z_m))$. We say
$\Gamma$ is an interpolation sequence for $(A,X)$ if $R_\Gamma(A)=X$. In other
words, $\Gamma$ is an interpolation sequence for $(A,X)$ if $(f(z_m))
\in X$ for all $f \in A$ and for every sequence $(a_m) \in X$, there is
a function $f \in A$ such that $f(z_m)=a_m$ for every $m$.

If $A=A(p,q)$, we let the sequence space $X$ to be $l^{p,q}$ defined as
folows:

Let $\beta = 1/L$ for some integer $L \geq 2$ and set $r_j = 1-\beta^j,
\,j=0,1,2,\dotsc$. Let us divide the unit disc $\ID $ into annuli
$A_j = \{ z\in\IC :\: r_j \le |z| < r_{j+1} \}$. Also divide each
annulus $A_j$ by means of equally spaced radii into $2L^j$ equal
`polar rectangles' $ Q_{j,k} = \{ z=r^{i\theta}: \,r_j \leq r < r_{j+1},
\,(k-1)\beta^j\pi \leq \theta < k\beta^j\pi \} $.

We now arrange $\Gamma$ such that
\begin{equation*}
  |z_1| \leq |z_2| \leq \dotsc < 1.
\end{equation*}
For each annulus $A_j$, let $L_j$ be the number of points of $\Gamma$ in
$A_j$ (necessarily finite). Number the points of $\Gamma$ by $z_{j,k}$
such that $z_{j,k} \in A_j, \,k=1,2,\dotsc,L_j$ and $|z_{j,k}| < |z_{j',k'}|$
if $j<j'$. Specifically, $(z_{j,k})$ is a doubly indexed
sequence defined by
\begin{equation}\label{eqn:doubleindex}
  z_{j,k} = z_m \text{ if } m = k + \sum_{i=0}^{j} L_i.
\end{equation}
Unless specified otherwise, every doubly indexed sequence from now on is
numbered according to \eqref{eqn:doubleindex}.

Now let $(a_m)$ be a sequence in $\IC $ and let $(a_{j,k})$ be its
doubly indexed sequence defined in the same manner, i.e. $a_{j,k} = a_m
\text{ if } m = k + \sum_{i=0}^{j} L_i$. Let us define a sequence space
$l^{p,q}(\Gamma)$ to consist of all sequence $(a_m)$ with
\begin{equation}
  \norm{ (a_m) }_{l^{p,q}(\Gamma)}
  \equiv
  \norm{ (a_{j,k}) }_{l^{p,q}(\Gamma)} = \left\{
    \sum_{j=0}^\infty (1-r_j)^{1+q/p}
    \left( \sum_{k=1}^{L_j} |a_{j,k}|^p \right)^{q/p}
    \right\}^{1/q} < \infty.
\end{equation}
The sequence space $l^{p,q}$ is now used for interpolation for $A(p,q)$.
We say $\Gamma$ is an interpolation sequence for $A(p,q)$ if
$R_\Gamma (A(p,q)) = l^{p,q}$.

We also require the definitions of interpolation sequences for $A^{-n}$
and $A_\alpha^p$. The corresponding sequence spaces are
$l_{-n}^\infty(\Gamma)$ and $l_\alpha^p(\Gamma)$ defined by
\begin{align}
  l_{-n}^\infty(\Gamma)
    &= \left\{
        (a_m) \subset \IC :\:
        \norm{ (a_m) }_{-n,\Gamma}
        =
        \sup_m (1-|z_m|^2)^n |a_m| < \infty
        \right\}, \\
  l_\alpha^p(\Gamma)
    &= \left\{
        (a_m) \subset \IC :\: \norm{ (a_m) }_{p,\alpha,\Gamma}^p
        =
        \sum_m |a_m|^p (1-|z_m|^2)^{\alpha + 2} < \infty
        \right\}.
\end{align}
Thus we say $\Gamma$ is an interpolation sequence for $A^{-n}$ if
$R_\Gamma (A^{-n})=l_{-n}^\infty(\Gamma)$ and $\Gamma$ is an interpolation
sequence for $A_\alpha^p$ if $R_\Gamma (A_\alpha^p)=l_\alpha^p(\Gamma)$.

\subsection{Uniformly Discrete Sequences}\label{sec:uniformlydiscrete}

Recall that for $0<p,q<\infty$, $A(p,q)$ and $l^{p,q}$ are invariant
complete metric spaces with the metric $d(x,y) = \norm{x-y}^s$,
$s=\min(p,q,1)$. Let $\Gamma=(z_m)$ be an interpolation sequence for
$A(p,q)$. A simple verification shows that the mapping $R_\Gamma : f \mapsto
(f(z_m))$ has closed graph and hence is bounded from $A(p,q)$ into
$l^{p,q}$. Since $\Gamma$ is an interpolation sequence, $R_\Gamma$ is also
onto. The open mapping theorem implies that there exists the smallest
constant $M(\Gamma)$ such that for every $(a_m) \in l^{p,q}$, there is
an $f \in A(p,q)$ satisfying $f(z_m)=a_m$ for all $m$ and
$\norm{f}_{A(p,q)} \leq M\norm{(a_m)}_{l^{p,q}}$. $M$ is called the
interpolation constant of $\Gamma$ for  $A(p,q)$.

Interpolation sequences are not too dense anywhere. In fact, a necessary
condition for interpolation is being uniformly discrete. A sequence
$\Gamma \equiv (z_m) \subset \ID $ is said to be uniformly
discrete with separation constant $\delta$ if
\begin{equation}
  \delta := \inf\{ \rho(z_n,z_m): n \neq m \} > 0
\end{equation}
where $\rho(z,w)$ is the pseudohyperbolic distance between two points in
$\ID $ given by
\begin{equation}
  \rho(z,w) = \dfrac{|z-w|}{|1-\bar{w}z|}.
\end{equation}
We also denote the pseudohyperbolic disk of radius $r$ centered at $z$
by
\begin{equation}
  E(z,r) = \{ \zeta\in\ID  : \rho(z,\zeta) < r \}.
\end{equation}

The following results on uniformly discrete sequences were proved in
\cite[Chapter 2]{durenschuster}:

\begin{proposition}\label{prop:uniformlydiscrete}
Suppose that $\Gamma=(z_m)$ is a uniformly discrete sequence with
$\rho(z_n,z_m) \geq \delta > 0$ for $m \neq n$. Let $n(\Gamma,z,r)$
denote the number of points in $\Gamma \cap E(z,r)$. Then
\begin{enumerate}
  \item ${\displaystyle\sum_{m=1}^\infty} (1-|z_m|^2)^2 \leq \dfrac{4}{\delta^2}$.
  \item $n(\Gamma,z,r) \leq \left(\frac{2}{\delta}+1\right)^2
        \dfrac{1}{1-r^2}$ for every point $z \in \ID $ and $0<r<1$. In
        particular, $n(\Gamma,z,r) = O\left(\dfrac{1}{1-r}\right)$.
\end{enumerate}
\end{proposition}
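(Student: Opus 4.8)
The plan is to derive both parts from a single geometric fact: since $\rho(z_n,z_m)\ge\delta$ for $n\ne m$, the pseudohyperbolic disks $E(z_m,\delta/2)$ are pairwise disjoint. I would begin by recording the elementary (and standard) facts about these disks. Writing $\varphi_a(z)=(a-z)/(1-\bar{a}z)$ for the Möbius automorphism of $\ID$ interchanging $0$ and $a$, one has $\rho(z,a)=|\varphi_a(z)|$, hence $E(a,s)=\varphi_a(\{|w|<s\})$, and a direct computation shows that $E(a,s)$ is the Euclidean disk of radius $s(1-|a|^2)/(1-s^2|a|^2)$; in particular its Euclidean area is $\pi s^2(1-|a|^2)^2/(1-s^2|a|^2)^2\ge\pi s^2(1-|a|^2)^2$. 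Note that $\delta<2$ (indeed $\delta\le1$, as $\rho<1$ always), so $\delta/2<1$ and these disks are genuine proper subdisks of $\ID$.

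For statement~1 this is essentially all that is needed: the disks $E(z_m,\delta/2)$ are disjoint subsets of $\ID$, so the sum of their Euclidean areas is at most $\pi$, giving $\sum_m\pi(\delta/2)^2(1-|z_m|^2)^2\le\pi$, which is exactly $\sum_m(1-|z_m|^2)^2\le4/\delta^2$. (If one normalizes $dA$ to have total mass one, the factor $\pi$ simply drops out and the constant is unchanged.)

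For statement~2 the naive Euclidean version of this argument is too wasteful --- it produces a bound of order $(1-r^2)^{-2}$ --- and the crux of the proof is to replace Euclidean area by the Möbius-invariant area measure $d\lambda(z)=dA(z)/(1-|z|^2)^2$. Since $\lambda$ is invariant under each $\varphi_a$ and $\varphi_a$ carries $E(a,s)$ onto $\{|w|<s\}$, one gets $\lambda(E(a,s))=\lambda(\{|w|<s\})=\pi s^2/(1-s^2)$, a constant independent of $a$. Next I would invoke the pseudohyperbolic ``triangle inequality'' $\rho(a,c)\le(\rho(a,b)+\rho(b,c))/(1+\rho(a,b)\rho(b,c))$ together with the fact that $(x,y)\mapsto(x+y)/(1+xy)$ is increasing in each variable on $[0,1)$: these show that if $z_m\in E(z,r)$ then $E(z_m,\delta/2)\subset E(z,r')$, where $r'=(r+\delta/2)/(1+r\delta/2)<1$. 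Hence the $n(\Gamma,z,r)$ disks $E(z_m,\delta/2)$ with $z_m\in E(z,r)$ are disjoint subsets of $E(z,r')$, each of $\lambda$-measure $\pi(\delta/2)^2/(1-(\delta/2)^2)$, so
\[
  n(\Gamma,z,r)\cdot\frac{\pi(\delta/2)^2}{1-(\delta/2)^2}\ \le\ \lambda\bigl(E(z,r')\bigr)=\frac{\pi(r')^2}{1-(r')^2}.
\]
It then remains to simplify the resulting estimate $n(\Gamma,z,r)\le\bigl((r')^2/(1-(r')^2)\bigr)\bigl((1-(\delta/2)^2)/(\delta/2)^2\bigr)$. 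Using the factorization $1-(r')^2=(1-r^2)(1-\delta^2/4)/(1+r\delta/2)^2$ (a short computation), the right-hand side collapses to $(2r+\delta)^2/(\delta^2(1-r^2))$, which is at most $(2/\delta+1)^2/(1-r^2)$ because $r<1$. The final assertion $n(\Gamma,z,r)=O(1/(1-r))$ is then immediate from $1-r^2\ge1-r$.

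The only real obstacle here is conceptual rather than computational: one must notice that the Euclidean area estimate that suffices for part~1 yields the wrong power of $1-r^2$ in part~2 and has to be traded for the invariant measure $d\lambda$ --- equivalently, that the hyperbolic area of $E(z,r)$ is of order $(1-r^2)^{-1}$, not $(1-r^2)^{-2}$. Once that is in place, the argument reduces to the two elementary identities above (the formula for $\lambda(E(a,s))$ and the factorization of $1-(r')^2$) plus the bookkeeping of constants, all of which is routine.
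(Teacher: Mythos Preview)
Your argument is correct in every detail and yields exactly the stated constants. The paper itself offers no proof of this proposition, merely citing \cite[Chapter~2]{durenschuster}; the argument you give (disjointness of the disks $E(z_m,\delta/2)$, Euclidean area for part~1, the M\"obius-invariant measure $d\lambda=(1-|z|^2)^{-2}\,dA$ together with the pseudohyperbolic triangle inequality for part~2) is precisely the standard proof found in that reference, so there is nothing further to compare.
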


\subsection{Seip's theorems and their extensions}

In \cite{seip}, Seip characterizes sets of sampling and interpolation
for $A^{-n}$ via certain densities which are equivalent to the
Korenblum's densities in \cite{Korenblum}. In order to state the
results, we require several definitions.

For $0<s<1$, let $n(\Gamma,\zeta,s)$ be the number of points of $\Gamma$
contained in $E(\zeta,s)$. Define, for $\Gamma$ uniformly discrete,
\begin{equation}
  F(\Gamma,\zeta,r) =
  \frac{{\displaystyle \int_0^r n(\Gamma,\zeta,s) \,ds}}
  {{\displaystyle 2\int_0^r a(E(0,s)) \,ds}}
\end{equation}
where
\begin{equation*}
  a(\Omega) = \int_\Omega \frac{1}{(1-|z|^2)^2} \,dA(z)
\end{equation*}
is the hyperbolic measure of a measurable subset $\Omega$ of $\ID $.

The lower and upper uniform densities are defined, respectively, to be
\begin{equation}
  D^-(\Gamma) = \liminf_{r \rightarrow 1} \inf_{\zeta \in \ID }
  F(\Gamma,\zeta,r)
\end{equation}
and
\begin{equation}
  D^+(\Gamma) = \limsup_{r \rightarrow 1} \sup_{\zeta \in \ID }
  F(\Gamma,\zeta,r).
\end{equation}
The densities can be reformulated as
\begin{align}
  D^-(\Gamma)
    &= \liminf_{r\rightarrow 1} \inf_{\zeta \in\ \ID }
        \dfrac{\displaystyle \sum\limits_{1/2 < \rho(\zeta,z_n) < r}
        \log\dfrac{1}{\rho(\zeta,z_n)}}
        {\left(\log \dfrac{1}{1-r}\right)}, \\
  D^+(\Gamma) &= \limsup_{r\rightarrow 1} \sup_{\zeta \in\ \ID }
        \dfrac{\displaystyle \sum\limits_{1/2 < \rho(\zeta,z_n) < r}
        \log\dfrac{1}{\rho(\zeta,z_n)}}
        {\left(\log \dfrac{1}{1-r}\right)}.
\end{align}

The following theorems were proved in \cite{seip}. (The definition of
sampling sequence occurs in section~\ref{sec:sampling}.)

\begin{theorem}\label{seiptheoremsampling}
A sequence $\Gamma$ of distinct points in $\ID $ is a set of
sampling for $A^{-n}$ if and only if it contains a uniformly discrete
subsequence $\Gamma'$ for which $D^-(\Gamma') > n$.
\end{theorem}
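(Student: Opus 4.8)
The plan is to run the Beurling-type ``weak limit'' machinery, exploiting the one structural advantage $A^{-n}$ has over the mixed-norm spaces: full conformal invariance. For $\zeta\in\ID$ write $\varphi_\zeta(z)=(\zeta+z)/(1+\bar\zeta z)$; then $f\mapsto (f\circ\varphi_\zeta)\,(\varphi_\zeta')^{\,n}$ is an isometry of $A^{-n}$ (because $(1-|\varphi_\zeta(z)|^2)=|\varphi_\zeta'(z)|(1-|z|^2)$), while $\rho$ and the counting function $n(\Gamma,\zeta,s)$, and hence $F(\Gamma,\zeta,r)$ and $D^{\pm}(\Gamma)$, are Möbius invariant. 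The argument rests on two facts from Korenblum--Seip zero-set theory, which I would isolate as lemmas: (A) if $g\in A^{-n}$, $g\not\equiv0$, then its zero sequence $Z_g$ has $D^+(Z_g)\le n$; (B) a uniformly discrete sequence with $D^-<n$ is not a sampling sequence for $A^{-n}$ — more precisely it is contained in the zero sequence of some nonzero $g\in A^{-n}$ — and the borderline case $D^-=n$ that arises below is handled by Korenblum's premeasure estimates. Fact (A) is a Poisson--Jensen/Riesz-mass estimate for the subharmonic function $\log|g|$ under the bound $|g(z)|\le(1-|z|^2)^{-n}$, applied uniformly over all hyperbolic centers; fact (B) requires the explicit construction of a nontrivial $A^{-n}$ function vanishing on a prescribed sparse sequence, via an infinite product or the exponential of a potential built from the counting measure.

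\emph{Sufficiency.} Suppose $\Gamma\supseteq\Gamma'$ with $\Gamma'$ uniformly discrete and $D^-(\Gamma')>n$. Since $\sup_m(1-|z_m|^2)^n|f(z_m)|\le\norm{f}_{-n}$ is automatic and only grows when points are added, it suffices to prove the reverse (sampling) inequality $\norm{f}_{-n}\le C\sup_{z\in\Gamma'}(1-|z|^2)^n|f(z)|$ for $\Gamma'$. If it fails, choose $f_k$ with $\norm{f_k}_{-n}=1$ and $\sup_{z\in\Gamma'}(1-|z|^2)^n|f_k(z)|\to0$, pick $w_k$ with $(1-|w_k|^2)^n|f_k(w_k)|>\tfrac12$, and replace $f_k$ by $g_k=(f_k\circ\varphi_{w_k})(\varphi_{w_k}')^{\,n}$ and $\Gamma'$ by $\Gamma_k=\varphi_{w_k}^{-1}(\Gamma')$. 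Then $\norm{g_k}_{-n}=1$, $|g_k(0)|\ge\tfrac12$, $\sup_{\zeta\in\Gamma_k}(1-|\zeta|^2)^n|g_k(\zeta)|\to0$, and each $\Gamma_k$ is uniformly discrete with the same separation constant and, by invariance, the same densities as $\Gamma'$. The unit ball of $A^{-n}$ is a normal family, so along a subsequence $g_k\to g$ locally uniformly with $g\in A^{-n}$, $g(0)\ne0$; the $\delta$-separated sequences are compact under local (Hausdorff-type) convergence, so along a further subsequence $\Gamma_k\to\Gamma_0$, uniformly discrete. Passing to limits in $(1-|\zeta_k|^2)^n|g_k(\zeta_k)|\to0$ along $\zeta_k\in\Gamma_k$, $\zeta_k\to\zeta_0\in\Gamma_0$, shows $g$ vanishes on $\Gamma_0$, while lower semicontinuity of $D^-$ under such limits gives $D^-(\Gamma_0)\ge D^-(\Gamma')>n$; this contradicts (A) since $\Gamma_0\subseteq Z_g$ forces $D^-(\Gamma_0)\le D^+(Z_g)\le n$.

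\emph{Necessity.} Suppose $\Gamma$ is a sampling sequence, with sampling constant depending on $C$. First reduce to the uniformly discrete case: for $\delta$ small enough (depending only on $C$ and $n$) let $\Gamma'$ be a maximal $\delta$-separated subsequence of $\Gamma$, so every point of $\Gamma$ lies within pseudohyperbolic distance $\delta$ of $\Gamma'$; then $\Gamma'$ is still sampling, for otherwise the weak-limit procedure above produces a nonzero $h\in A^{-n}$ with $|h(0)|$ bounded below by $1/(2C)$, $\norm{h}_{-n}\le1$, and a zero inside $E(0,\delta)$, which is impossible for small $\delta$ by a Schwarz-type estimate. It remains to show $D^-(\Gamma')>n$. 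If not, choose $\zeta_k\in\ID$ and $r_k\to1$ with $F(\Gamma',\zeta_k,r_k)\to D^-(\Gamma')\le n$, translate by $\varphi_{\zeta_k}$, and pass to a weak limit $\Gamma^\sharp$: it is uniformly discrete, has $D^-(\Gamma^\sharp)\le n$ (realized ``at the origin''), and is still sampling, since the sampling inequality passes to weak limits of translates by the same normal-families argument. But a uniformly discrete sequence with $D^-\le n$ is not sampling by (B), a contradiction; hence $D^-(\Gamma')>n$.

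\emph{Where the difficulty lies.} The normal-families shuffling of functions and sequences to their weak limits is routine. The two genuine obstacles are the semicontinuity of the uniform densities under weak limits of uniformly discrete sequences (delicate because the defining $\liminf_{r\to1}\inf_\zeta$ must be shown to be inherited by the limit configuration, which one arranges by translating a near-extremal center to the origin before taking the limit), and the zero-set facts (A) and (B), which carry essentially the whole weight of the theorem and in the critical case $D^-=n$ require the finer Korenblum premeasure machinery.
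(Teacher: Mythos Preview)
The paper does not contain a proof of this theorem at all: it is stated as a known result of Seip, with the sentence ``The following theorems were proved in \cite{seip}'' immediately preceding it, and the paper then \emph{uses} it (via Theorem~\ref{seiptheoreminterpolation} and its Bergman-space analogues) rather than proving it. So there is no in-paper proof to compare your proposal against.

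That said, your sketch is a faithful outline of Seip's original argument in \cite{seip}: the conformal invariance of $A^{-n}$, the normal-families/weak-limit compactness of both functions and uniformly discrete sequences, and the reduction to the two zero-set facts you label (A) and (B). You are also right that the genuine content sits in (A), (B), and the semicontinuity of $D^{\pm}$ under weak limits; everything else is routine. Two small remarks. First, in the necessity step your reduction to a uniformly discrete $\Gamma'$ is slightly too quick as stated: maximal $\delta$-separation guarantees every point of $\Gamma$ is near $\Gamma'$, but the argument that $\Gamma'$ remains sampling should go through a direct oscillation estimate (of the type in inequality~\eqref{ine:thm:sta}) rather than the weak-limit contradiction you sketch, since the latter already presupposes the sampling inequality you are trying to establish for $\Gamma'$. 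Second, your fact (B) in the borderline case $D^-(\Gamma^\sharp)=n$ is exactly where Seip's argument is most delicate, and your proposal correctly flags but does not resolve it; any complete write-up would need the explicit construction (infinite product with controlled growth) at that point.
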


\begin{theorem}\label{seiptheoreminterpolation}
A sequence $\Gamma$ of distinct points in $\ID $ is a set of
interpolation for $A^{-n}$ if and only if $\Gamma$ is uniformly discrete
and $D^+(\Gamma) < n$.
\end{theorem}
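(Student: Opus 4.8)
\emph{Plan of the proof.} I prove the two implications separately; only the necessity of the strict bound $D^+(\Gamma)<n$ is hard. Two facts are used throughout. First, $A^{-n}$ is isometrically Möbius invariant: writing $\varphi_a(z)=(a-z)/(1-\bar{a}z)$ and $T_af=(f\circ\varphi_a)(\varphi_a')^{\,n}$ (well defined and analytic since $\varphi_a'$ has no zeros on the simply connected disc), the identity $(1-|z|^2)|\varphi_a'(z)|=1-|\varphi_a(z)|^2$ gives $\norm{T_af}_{-n}=\norm{f}_{-n}$, so $T_a$ is an isometric involution of $A^{-n}$ carrying $\Gamma$ to $\varphi_a(\Gamma)$, and $D^+$ is unchanged by these maps. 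Second, exactly as in the paragraph preceding Proposition~\ref{prop:uniformlydiscrete}, $R_\Gamma$ has closed graph, so if $\Gamma$ is an interpolation sequence the open mapping theorem produces a finite interpolation constant $M=M(\Gamma)$.

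\emph{Necessity of uniform discreteness.} If $\Gamma$ is not uniformly discrete, pick $m_k\ne n_k$ with $\rho(z_{m_k},z_{n_k})\to0$ and, for each $k$, solve the problem with datum $(1-|z_{m_k}|^2)^{-n}$ at $z_{m_k}$ and $0$ at every other point of $\Gamma$, obtaining $f_k$ with $\norm{f_k}_{-n}\le M$. Setting $g_k=T_{z_{m_k}}f_k$, one has $\norm{g_k}_{-n}\le M$ and, since $\varphi_a'(0)=-(1-|a|^2)$, $|g_k(0)|=(1-|z_{m_k}|^2)^n|f_k(z_{m_k})|=1$. The $g_k$ are uniformly bounded on $\{|z|\le\tfrac12\}$, so a subsequence converges locally uniformly to $g$ with $\norm{g}_{-n}\le M$ and $|g(0)|=1$; but $g_k$ vanishes at $\zeta_k=\varphi_{z_{m_k}}(z_{n_k})$ and $|\zeta_k|=\rho(z_{m_k},z_{n_k})\to0$, which forces $g(0)=0$, a contradiction.

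\emph{Sufficiency.} Assume $\Gamma$ is uniformly discrete and $D^+(\Gamma)<n$, and fix $\eps>0$ with $s:=D^+(\Gamma)+\eps<n$. I would interpolate by an explicit series based on the canonical-product construction of Korenblum \cite{Korenblum} (see also \cite{seip}), which provides an analytic $g_\Gamma$ with simple zeros exactly at the points of $\Gamma$, with $g_\Gamma\in A^{-s}$ and with the matching lower bound $|g_\Gamma'(z_m)|(1-|z_m|^2)^{s+1}\ge c>0$ uniform in $m$. Given $(a_m)\in l_{-n}^\infty(\Gamma)$ and a large integer $N$, put
\begin{equation*}
  f(z)=g_\Gamma(z)\sum_m \frac{a_m}{g_\Gamma'(z_m)(z-z_m)}
        \left(\frac{1-|z_m|^2}{1-\bar{z}_m z}\right)^{N},
\end{equation*}
so that $f(z_m)=a_m$ once the removable singularities are filled in. To bound $(1-|z|^2)^n|f(z)|$, fix $z$ and split the sum at $E(z,\tfrac12)$: for $z_m\notin E(z,\tfrac12)$ one has $|z-z_m|\asymp|1-\bar{z}_mz|$, and inserting the bounds on $a_m,g_\Gamma,g_\Gamma'$ and using uniform discreteness to dominate the sum by $\int_\ID(1-|w|^2)^{s-1-n+N}|1-\bar{w}z|^{-(N+1)}\,dA(w)\asymp(1-|z|^2)^{s-n}$ (finite and sharp since $s<n$ and $N$ is large) yields $C\norm{(a_m)}_{-n,\Gamma}$; for $z_m\in E(z,\tfrac12)$ there are boundedly many terms by Proposition~\ref{prop:uniformlydiscrete}(2), and each factor $g_\Gamma(z)/(g_\Gamma'(z_m)(z-z_m))$ is $O(1)$ on a fixed pseudohyperbolic disc by Schwarz's lemma. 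Hence $f\in A^{-n}$ with $\norm{f}_{-n}\le C\norm{(a_m)}_{-n,\Gamma}$, and $\Gamma$ is an interpolation sequence.

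\emph{Necessity of $D^+(\Gamma)<n$; the main obstacle.} Suppose $\Gamma$ is interpolating with constant $M$. The hard step is to upgrade this to the strict inequality, and the tool is perturbation stability: a standard iteration — solve on $\Gamma$, transfer the values to a nearby sequence $\Gamma'=(z_m')$ with defect controlled by $(1-|z_m|^2)^n|f(z_m)-f(z_m')|\le C\rho(z_m,z_m')\norm{f}_{-n}$, iterate, and sum the resulting geometric series in the Banach space $A^{-n}$ — shows there is $\tau>0$ such that every $\Gamma'$ with $\sup_m\rho(z_m,z_m')<\tau$ is again interpolating, with constant $\le2M$. Since each dilation $\varphi_\zeta(\Gamma)$ is interpolating with the same constant $M$ and the same $D^+$, I would, assuming $D^+(\Gamma)\ge n$, choose $\zeta_j$ and $r_j\to1$ with $F(\Gamma,\zeta_j,r_j)\to D^+(\Gamma)$ and pass to a Hausdorff limit $\Lambda$ of $\varphi_{\zeta_j}(\Gamma)$: the separation constant survives, $\Lambda$ is regular with $D^-(\Lambda)=D^+(\Lambda)=D^+(\Gamma)\ge n$, and a normal-families argument transfers interpolation (constant $\le M$) to $\Lambda$ and, via the stability above, to its small perturbations. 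It then remains to see that a regular sequence of density $\ge n$ cannot be interpolating for $A^{-n}$: when $D^+(\Gamma)>n$, any $A^{-n}$ function vanishing on $\Lambda$ except at one point is a scalar multiple of the canonical divisor of the rest, which — having density exceeding $n$ — grows too fast for a nonzero such multiple to lie in $A^{-n}$, so the datum that is $1$ at one point and $0$ elsewhere cannot be interpolated; the genuinely delicate case is $D^+(\Gamma)=n$, where one must invoke Korenblum's sharp two-sided growth estimates for the extremal divisor of a critical-density set to force the interpolation constant to $+\infty$. Making this borderline estimate rigorous, via the premeasure machinery of Korenblum \cite{Korenblum} as adapted by Seip \cite{seip}, is the real content of the theorem, and it is also the only step with no counterpart for the mixed-norm spaces, whose absence of isometric conformal invariance is precisely why the present scheme does not transfer there.
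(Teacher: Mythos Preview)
The paper does not give its own proof of this theorem: it is stated as a known result of Seip, with the sentence ``The following theorems were proved in \cite{seip}'' immediately preceding it. So there is nothing to compare your argument against in this paper.

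That said, your sketch is essentially Seip's original strategy---the isometric M\"obius action on $A^{-n}$, a Korenblum-type product for the explicit interpolation formula in the sufficiency direction, and a weak-limit/normal-families argument for necessity---and it is broadly sound. One comment on the borderline case $D^+(\Gamma)=n$: you correctly identify it as the delicate point and propose to resolve it via Korenblum's sharp growth estimates for the critical-density divisor. There is a cleaner alternative, and it is the one this paper later uses for $A(p,q)$ (Remark~\ref{rem:stability}): once you have perturbation stability and have shown $D^+(\Gamma)\le n$ for \emph{every} interpolation sequence, Lemma~\ref{lem:domanski} lets you perturb any given interpolating $\Gamma$ to an interpolating $\Gamma'$ with $D^+(\Gamma')\ge(1+\delta)D^+(\Gamma)$, and since $D^+(\Gamma')\le n$ as well, you get $D^+(\Gamma)<n$ for free. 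This reduces the necessity proof to the non-strict bound and sidesteps the critical-density analysis entirely. Your closing remark about conformal invariance is exactly the point the paper is built around: the scheme you outline depends on the isometric M\"obius action on $A^{-n}$, and the absence of such an action on $A(p,q)$ is why Sections~2 and~3 of the paper take a different route.
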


Theorem \ref{seiptheoreminterpolation} was extended to the Bergman space
$A^p \,(0<p<\infty)$ and the weighted Bergman space $A_\alpha^p
\,(0<p<\infty, \,\alpha>-1)$. The proofs can be found in
\cite{durenschuster}, \cite{schuster} and \cite{jevtic}.

\begin{theorem}\label{weightedbergmansampling}
A sequence $\Gamma$ of distinct points in $\ID $ is a set of
sampling for $A_\alpha^p$ if and only if it contains a uniformly
discrete subsequence $\Gamma'$ for which $D^-(\Gamma') > (1+\alpha)/p$.
\end{theorem}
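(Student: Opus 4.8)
\medskip
\noindent\textbf{Proof strategy.}
The plan is to prove both implications by first reducing to the uniformly discrete case and then invoking the sharp critical--exponent theory of $A^p_\alpha$. Recall that $\Gamma=(z_m)$ is a set of sampling for $A^p_\alpha$ precisely when $\norm{(f(z_m))}_{p,\alpha,\Gamma}\asymp\norm{f}_{p,\alpha}$ for all $f\in A^p_\alpha$; I call the two halves of this equivalence the \emph{upper} and \emph{lower} sampling inequalities. Four ingredients are used: (i) the submean--value estimate $|f(z)|^p\le C_r(1-|z|^2)^{-2}\int_{E(z,r)}|f|^p\,dA$, valid because $|f|^p$ is subharmonic, together with the comparability $1-|\zeta|^2\asymp1-|z|^2$ on $E(z,r)$; (ii) the fact that the weighted composition operators $T_wf=(f\circ\varphi_w)(\varphi_w')^{(2+\alpha)/p}$, with $\varphi_w(z)=(w-z)/(1-\bar wz)$, act isometrically on $A^p_\alpha$ and satisfy $|T_wf(\varphi_w(\zeta))|^p(1-|\varphi_w(\zeta)|^2)^{\alpha+2}=|f(\zeta)|^p(1-|\zeta|^2)^{\alpha+2}$, so sampling, with the same constants, is invariant under M\"obius recentering of $\Gamma$; (iii) the standard facts of Beurling--type density theory: uniformly discrete sequences of a fixed separation constant form a compact family under local convergence of point sets, $D^\pm$ are M\"obius invariant, $D^-$ is lower semicontinuous under such limits, and a weak limit of translates $\varphi_{\zeta_k}(\Gamma)$ taken along centers that minimize $F(\Gamma,\zeta_k,\cdot)$ has $D^+\le D^-(\Gamma)$; and (iv) the sharp critical--exponent facts for $A^p_\alpha$, namely that a uniformly discrete sequence of lower density exceeding $(1+\alpha)/p$ is a uniqueness set for $A^p_\alpha$, while a uniformly discrete sequence whose density does not exceed $(1+\alpha)/p$ is not a set of sampling. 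These last two, together with the interpolation characterization (the $A^p_\alpha$ analogue of Theorem~\ref{seiptheoreminterpolation}), are exactly what is established by the contractive zero--divisor and Korenblum--density constructions of \cite{durenschuster}, \cite{schuster} and \cite{jevtic}.

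\medskip
\noindent\textbf{Reduction to uniformly discrete sequences.}
The upper inequality holds for \emph{every} uniformly discrete $\Gamma$: by (i), $|f(z_m)|^p(1-|z_m|^2)^{\alpha+2}\le C\int_{E(z_m,r)}|f(\zeta)|^p(1-|\zeta|^2)^\alpha\,dA(\zeta)$, and for $r$ small the disks $E(z_m,r)$ are disjoint (with bounded overlap if $r$ is enlarged, by Proposition~\ref{prop:uniformlydiscrete}), so summing yields $\le C\norm{f}_{p,\alpha}^p$. If $\Gamma$ is a sampling sequence, applying the upper inequality to the normalized functions $f_z(w)=(1-|z|^2)(1-\bar zw)^{-(2+\alpha)/p-1}$, for which $\norm{f_z}_{p,\alpha}\asymp1$ while $|f_z(\zeta)|^p(1-|\zeta|^2)^{\alpha+2}\ge c_0>0$ on $E(z,1/2)$, forces $n(\Gamma,z,1/2)\le C$ uniformly in $z$; hence $\Gamma$ is a finite union of uniformly discrete sequences, and a selection argument (as in \cite{durenschuster}, using that point values of $A^p_\alpha$ functions vary slowly in the pseudohyperbolic metric and that each fixed pseudohyperbolic disk meets $\Gamma$ in boundedly many points) removes points to leave a uniformly discrete subsequence $\Gamma'$ that is still a sampling sequence. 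Since passing to a subsequence can only decrease $D^-$, it now suffices to show that a uniformly discrete $\Gamma'$ is a sampling sequence if and only if $D^-(\Gamma')>(1+\alpha)/p$.

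\medskip
\noindent\textbf{Necessity of the density.}
Suppose $\Gamma'$ is uniformly discrete and sampling but, for contradiction, $D^-(\Gamma')\le(1+\alpha)/p$. Choose $\zeta_k$ and $r_k\to1$ along which $F(\Gamma',\zeta_k,r_k)$ tends to the liminf defining $D^-(\Gamma')$, and by (iii) pass to a subsequence so that $\varphi_{\zeta_k}(\Gamma')\to\Gamma^{\ast}$ with $D^+(\Gamma^{\ast})\le(1+\alpha)/p$. By (iv), $\Gamma^{\ast}$ is not a sampling sequence, so there are $g_j\in A^p_\alpha$ with $\norm{g_j}_{p,\alpha}=1$ and $\norm{(g_j(z))}_{p,\alpha,\Gamma^{\ast}}\to0$; after truncating $\Gamma^{\ast}$ to a large pseudohyperbolic disk (bounding the tail by the upper inequality for $\Gamma^{\ast}$) and transporting back through $T_{\zeta_k}$ by means of (ii), one obtains unit--norm functions in $A^p_\alpha$ whose $l^p_\alpha(\Gamma')$--norms of restriction are arbitrarily small, contradicting the lower sampling inequality for $\Gamma'$.

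\medskip
\noindent\textbf{Sufficiency of the density, and the main obstacle.}
Let $\Gamma'$ be uniformly discrete with $D^-(\Gamma')>(1+\alpha)/p$; only the lower inequality needs proof. If it fails there are $f_k\in A^p_\alpha$ with $\norm{f_k}_{p,\alpha}=1$ and $\norm{(f_k(z_m))}_{p,\alpha,\Gamma'}\to0$. Choose $w_k$ so that $|T_{w_k}f_k(0)|$ is at least half $\sup_w|T_wf_k(0)|$ and set $g_k=T_{w_k}f_k$, $\Lambda_k=\varphi_{w_k}(\Gamma')$; then $\norm{g_k}_{p,\alpha}=1$ and, by (ii), $\norm{(g_k(z))}_{p,\alpha,\Lambda_k}=\norm{(f_k(z_m))}_{p,\alpha,\Gamma'}\to0$. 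Passing to a subsequence with $g_k\to g$ locally uniformly and $\Lambda_k\to\Lambda$, Fatou gives $g\in A^p_\alpha$ with $\norm{g}_{p,\alpha}\le1$, the vanishing of the restriction data forces $g\equiv0$ on $\Lambda$, and $D^-(\Lambda)\ge D^-(\Gamma')>(1+\alpha)/p$ by (iii); by (iv), $\Lambda$ is a uniqueness set for $A^p_\alpha$, so $g\equiv0$. It remains to rule out the degenerate case in which $g$ vanishes identically because the mass of $f_k$ escapes to the boundary faster than the recentering can follow it --- this is exactly where the argument must be made careful, by choosing the recentering points from the region carrying a fixed proportion of $\int_\ID|T_wf_k(0)|^p\,dA(w)/(1-|w|^2)^2=\norm{f_k}_{p,\alpha}^p$ and using uniform tail control from the upper inequality, or, alternatively, by transferring the whole statement to the Hilbert space case $A^2_\beta$ with $(1+\beta)/2=(1+\alpha)/p$ through an atomic decomposition over a fixed dense lattice. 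The genuine obstacle, however, is ingredient (iv): the statements that lower density above $(1+\alpha)/p$ forces uniqueness and density at or below $(1+\alpha)/p$ destroys sampling are strictly finer than anything the growth--space theory behind Theorem~\ref{seiptheoremsampling} supplies --- the threshold $(1+\alpha)/p$ is not the embedding exponent $(2+\alpha)/p$ of $A^p_\alpha$ into a growth space --- and they are proved only through delicate extremal--function (canonical zero--divisor) constructions adapted to $A^p_\alpha$.
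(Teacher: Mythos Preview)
The paper does not prove this theorem at all: it is stated in the introduction as a known extension of Seip's results, with the proofs attributed to \cite{durenschuster}, \cite{schuster} and \cite{jevtic}. So there is no ``paper's own proof'' to compare against; the paper simply quotes the result and uses it as background for the mixed-norm theorems that are its actual subject.

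Your proposal is a reasonable outline of the standard weak-limit/M\"obius-compactness strategy used in those references, and the reduction to uniformly discrete sequences and the M\"obius invariance in (ii) are fine. But as you yourself acknowledge in the final paragraph, the argument is not self-contained: ingredient~(iv) is doing all the real work, and it is essentially equivalent to the theorem you are trying to prove. In the necessity direction you invoke ``$D^+(\Gamma^\ast)\le(1+\alpha)/p$ implies $\Gamma^\ast$ is not sampling,'' and in the sufficiency direction you invoke ``$D^-(\Lambda)>(1+\alpha)/p$ implies $\Lambda$ is a uniqueness set for $A^p_\alpha$.'' Neither of these is a soft fact; both require exactly the extremal-function/contractive-divisor machinery of the cited works, and the second is in fact strictly stronger than the growth-space uniqueness statement behind Theorem~\ref{seiptheoremsampling}. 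So what you have written is a correct reduction of the theorem to its own hard core, plus an honest admission that the core remains. That is fine as a roadmap, but it is not an independent proof --- which is consistent with the paper's own choice to cite rather than reprove.

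One further technical point: in the sufficiency half you note the ``degenerate case'' where $g\equiv 0$ because mass escapes, and you propose fixing it either by choosing recentering points from a region of fixed mass or by transferring to a Hilbert case. The first suggestion is the one actually used in the literature (one normalizes so that $|g_k(0)|$ is bounded below, not merely near the supremum), but making this rigorous already uses the sub-mean-value control you have in (i) together with a non-tangential maximal estimate; the second suggestion (transfer to $A^2_\beta$ via atomic decomposition) does not obviously preserve the sampling property in both directions and would need justification.
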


\begin{theorem}\label{weightedbergmaninterpolation}
A sequence $\Gamma$ of distinct points in $\ID $ is a set of
interpolation for $A_\alpha^p$ if and only if $\Gamma$ is uniformly
discrete and $D^+(\Gamma) < (1 + \alpha)/p$.
\end{theorem}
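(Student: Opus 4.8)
Write $n\defeq(1+\alpha)/p$. The key observation is that the interpolation problem for $A_\alpha^p$ has the same density threshold $n$ as the growth-space problem for $A^{-n}$ (for which Theorem~\ref{seiptheoreminterpolation} is available), even though the natural growth space containing $A_\alpha^p$ is the strictly larger $A^{-(2+\alpha)/p}$; and, crucially, $A_\alpha^p$ — unlike the genuine mixed-norm spaces — is Möbius invariant, since $f\mapsto(f\circ\varphi_w)(\varphi_w')^{(2+\alpha)/p}$ is an isometric automorphism. The plan is therefore to prove the two implications separately, following the blueprint of Seip's proof of Theorem~\ref{seiptheoreminterpolation} and transplanting each step to the weighted Bergman setting.

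\emph{Necessity.} Uniform discreteness is a soft consequence of functional analysis: $R_\Gamma\colon A_\alpha^p\to l_\alpha^p(\Gamma)$ has closed graph, hence is bounded, and is onto by hypothesis, so the open mapping theorem supplies a bounded right inverse; feeding it the normalized test functions $k_w(z)=(1-|w|^2)^{(2+\alpha)/p}(1-\bar wz)^{-2(2+\alpha)/p}$, whose $A_\alpha^p$-norms are bounded and whose values at $w$ are of order $(1-|w|^2)^{-(2+\alpha)/p}$, one sees that a two-point interpolation problem with data concentrated at $z_j$ contradicts the estimate $|f(z_j)-f(z_k)|\le C\,\rho(z_j,z_k)(1-|z_j|^2)^{-(2+\alpha)/p}\norm{f}_{p,\alpha}$ unless $\inf_{j\ne k}\rho(z_j,z_k)>0$. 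The strict bound $D^+(\Gamma)<n$ is the delicate half, and here I would mimic Seip's potential-theoretic argument: realize the $\limsup$ defining $D^+$ along automorphisms $\varphi_{w_k}$, transfer interpolation with a uniform constant to the translated sequences $\varphi_{w_k}(\Gamma)$ via the Möbius invariance above, extract a limit sequence $\Gamma_\infty$ by normal families, and show that $\Gamma_\infty$ would be too dense to support a bounded interpolation operator — whence $D^+(\Gamma)<n$ strictly.

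\emph{Sufficiency.} Assume $\Gamma$ is uniformly discrete with $D^+(\Gamma)<n$ and let $(a_m)\in l_\alpha^p(\Gamma)$. I would use the $\bar\partial$-method: interpolate the data by a smooth function $\tilde f$ supported in tiny hyperbolic disks about the $z_m$ with $\norm{\tilde f}_{p,\alpha}\le C\norm{(a_m)}_{p,\alpha,\Gamma}$, then solve $\bar\partial u=\bar\partial\tilde f$ in an $L^p$-space whose weight combines $(1-|z|^2)^\alpha$ with local logarithmic singularities $\log(1/|z-z_m|)$ at the nodes. The singularities force $u(z_m)=0$, so $f\defeq\tilde f-u$ is holomorphic, satisfies $f(z_m)=a_m$, and obeys $\norm{f}_{p,\alpha}\le C'\norm{(a_m)}_{p,\alpha,\Gamma}$; the Hörmander-type curvature inequality underlying the $\bar\partial$-estimate is precisely where the slack $n-D^+(\Gamma)>0$ is spent, since the point masses must be spread thinly enough that the Laplacian of the weight still dominates $(1-|z|^2)^{-2}$. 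Alternatively one can use the classical interpolation series
\[
  f(z)=\sum_m a_m\,\frac{S(z)}{S'(z_m)(z-z_m)}
        \left(\frac{1-|z_m|^2}{1-\bar z_m z}\right)^{N},\qquad N\text{ large},
\]
where $S$ is a holomorphic generating function vanishing simply on $\Gamma$ with $(1-|z|^2)^n|S(z)|$ comparable to the pseudohyperbolic distance from $z$ to $\Gamma$ and $(1-|z_m|^2)^{n+1}|S'(z_m)|$ bounded below — it is in producing $S$ that $D^+(\Gamma)<n$ is consumed — after which $\norm{f}_{p,\alpha}$ is estimated annulus by annulus, the Möbius factors localizing the sum and Proposition~\ref{prop:uniformlydiscrete} bounding the number of surviving terms, so that the weighted integral is dominated by $\sum_m|a_m|^p(1-|z_m|^2)^{\alpha+2}=\norm{(a_m)}_{p,\alpha,\Gamma}^p$.

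I expect the main obstacle to be \emph{sharpness}: both the limiting step in the necessity half and the production of the good weight (equivalently, of the generating function $S$) must not lose an $\eps$, so that the single number $n=(1+\alpha)/p$ cleanly separates interpolating from non-interpolating sequences — this is what forces Seip's somewhat delicate translation-and-compactness argument rather than a cruder one. The remaining ingredients — the open-mapping step, the localization by the Möbius factors, the summation over $\Gamma$ — are bookkeeping resting on Proposition~\ref{prop:uniformlydiscrete}. A genuine secondary nuisance is the range $p\ne 2$: the Hilbert-space $\bar\partial$ machinery then has to be upgraded to an $L^p$ estimate (or, in the series approach, orthogonality is replaced by the near-disjointness of the dominant supports of the summands).
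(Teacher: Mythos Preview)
The paper does not prove this statement. Theorem~\ref{weightedbergmaninterpolation} is quoted in the introduction as a known extension of Seip's Theorem~\ref{seiptheoreminterpolation}, with the proofs attributed to \cite{durenschuster}, \cite{schuster} and \cite{jevtic}; the paper then \emph{uses} it as a black box in the necessity half of its own main result, Theorem~\ref{interpolationthm}. So there is no proof in the paper to compare your attempt against.

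For what it is worth, your sketch is broadly in line with the arguments in the cited references. The sufficiency proof in \cite{schuster} and \cite{durenschuster} is actually closer to your second alternative than to your $\bar\partial$ route, and it is organized exactly as the paper's own sufficiency proof for $A(p,q)$: one invokes the already-established $A^{-n}$ case with $n=(1+\alpha)/p-\eps$ to obtain peak functions $g_m\in A^{-n}$ with $g_m(z_m)=(1-|z_m|^2)^{-n}$, $g_m(z_{m'})=0$ for $m'\ne m$, and $\norm{g_m}_{-n}\le M(\Gamma)$, and then checks that
\[
  f(z)=\sum_m a_m\,g_m(z)\,\frac{(1-|z_m|^2)^{n+s}}{(1-\bar z_m z)^s}
\]
lies in $A_\alpha^p$ for $s$ large. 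This is your interpolation-series idea with the generating function $S$ hidden inside the $g_m$; the sharpness issue you worry about is thereby delegated entirely to Seip's theorem rather than re-proved. Your $\bar\partial$ approach is genuinely different and is closer to \cite{jevtic}; it works, with exactly the $L^p$ nuisance you flag when $p\ne 2$. For necessity, the strict inequality $D^+(\Gamma)<(1+\alpha)/p$ is obtained in those references essentially as you describe, via M\"obius invariance plus a stability/weak-limit argument; the paper uses the same stability trick (Remark~\ref{rem:stability}) in its mixed-norm setting.
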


\section{Some properties of interpolation sequences for mixed-norm
spaces}

\subsection{Basic properties of mixed norm
spaces}\label{sec:basicproperties}

Recall that the mixed norm space $A(p,q)$ consists of analytic functions
$f$ in the unit disk with
\begin{equation*}
  \norm{ f}_{A(p,q)} = \left[ \int_0^1 \left( \frac{1}{2\pi}
  \int_0^{2\pi} |f(re^{i\theta})|^p \,d\theta \right)^{q/p}
  2r\,dr \right]^{1/q} < \infty.
\end{equation*}
It is easy to check that $A(p,q)$ is an invariant metric space with the
metric
\begin{equation}\label{def:mixednormmatrix}
    d(f,g):=\norm{f-g}_{A(p,q)}^s
\end{equation}
where $s=\min(p,q,1)$. If $p,q > 1$, $(A(p,q),\norm{.}_{A(p,q)})$ is a
normed linear space.

The following results can be found in \cite{gadbois} and \cite{benedek}:

\begin{proposition}\label{prop:basicproperties} {\ }
  \begin{enumerate}
    \item Let $0<p,q<\infty$ and $f \in A(p,q)$, then for all $z \in \ID $:
       \begin{equation}
         |f(z)| \leq C\norm{f}_{A(p,q)} (1-|z|)^{-(1/p+1/q)}
       \end{equation}
       for some constant $C$ independent of $f$.
    \item If $d$ is the metric defined in \eqref{def:mixednormmatrix},
          then $(A(p,q),d)$ is an invariant, complete metric space and
          $(A(p,q),\norm{.}_{A(p,q)})$ is a Banach space if $p,q\ge1$.
    \item If $p,q > 1$ and $1/p+1/p'=1,\, 1/q+1/q'=1$, then there is a
          continuous linear isomorphism of $A(p',q')$ onto the dual space of
          $A(p,q)$.
  \end{enumerate}
\end{proposition}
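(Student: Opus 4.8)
The plan is to prove the three assertions in turn; the first two are routine and the third carries the substance.

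\textbf{Parts 1 and 2.} For the pointwise growth estimate I would run the standard two-step mean-value argument. First, since $r\mapsto M_p(r,f)$ is nondecreasing, $\norm{f}_{A(p,q)}^q\ge\int_\rho^1 M_p(t,f)^q\,2t\,dt\ge M_p(\rho,f)^q(1-\rho^2)$, so $M_p(\rho,f)\le C\norm{f}_{A(p,q)}(1-\rho)^{-1/q}$. Second, for $|z|=\rho$ apply subharmonicity of $|f|^p$ on the Euclidean disc $D\bigl(z,(1-\rho)/2\bigr)$: its area average is at most a constant times $(1-\rho)^{-1}M_p\bigl((1+\rho)/2,f\bigr)^p$ (bound the area integral by $\int M_p(t,f)^p$ over the annulus the disc meets and use monotonicity), and feeding in the first estimate at radius $(1+\rho)/2$ gives $|f(z)|^p\le C\norm{f}_{A(p,q)}^p(1-\rho)^{-1-p/q}$, i.e.\ the exponent $1/p+1/q$ after taking $p$-th roots. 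For completeness: a sequence Cauchy for $d$ is Cauchy for $\norm{\cdot}_{A(p,q)}$, hence by Part~1 uniformly Cauchy on compact subsets of $\ID$ and so locally uniformly convergent to an analytic $f$; Fatou's lemma applied to $\int_0^1 M_p(r,f-f_n)^q\,2r\,dr$ then shows $\norm{f-f_n}_{A(p,q)}\to0$ and $f\in A(p,q)$. When $p,q\ge1$, two applications of Minkowski's inequality (in $\theta$ with exponent $p$, then in $r$ against $2r\,dr$ with exponent $q$) give the triangle inequality, so $\norm{\cdot}_{A(p,q)}$ is a genuine norm and $A(p,q)$ is a Banach space.

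\textbf{Part 3.} Use the sesquilinear pairing $\langle f,g\rangle=\int_\ID f\bar g\,dA$. Two applications of Hölder's inequality (in $\theta$ with exponents $p,p'$, then in $r$ against $2r\,dr$ with exponents $q,q'$) show the pairing is absolutely convergent with $|\langle f,g\rangle|\le\norm{f}_{A(p,q)}\norm{g}_{A(p',q')}$, so $T\colon g\mapsto\langle\cdot,g\rangle$ is a norm-one map of $A(p',q')$ into $A(p,q)^*$. Testing $Tg$ against the monomials $z^n\in A(p,q)$ recovers the Taylor coefficients of $g$, so $T$ is injective. For surjectivity with a bounded inverse, take $\Lambda\in A(p,q)^*$, extend it by Hahn--Banach to $L(p,q)$ without increasing the norm, and use the Benedek--Panzone identification $L(p,q)^*\cong L(p',q')$ \cite{benedek} to write the extension as $\phi\mapsto\int_\ID\phi\bar h\,dA$ for some $h\in L(p',q')$ with $\norm{h}_{L(p',q')}=\norm{\Lambda}$. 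Set $g=Ph$, where $P$ is the Bergman projection $Pf(z)=\int_\ID f(w)(1-z\bar w)^{-2}\,dA(w)$; then $g$ is analytic, $\norm{g}_{A(p',q')}\le C\norm{h}_{L(p',q')}=C\norm{\Lambda}$, and one checks $\langle z^n,g\rangle=\langle z^n,Ph\rangle=\int_\ID z^n\bar h\,dA=\Lambda(z^n)$ for every $n$, the middle equality coming from the reproducing identity $\int_\ID z^n(1-\bar z w)^{-2}\,dA(z)=w^n$ (Fubini is legitimate since $h\in L^1(dA)$, because $L(p',q')\subset L^1(dA)$ by Hölder against the constant $1$). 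Since polynomials are dense in $A(p,q)$---the dilations $f_\rho(z)=f(\rho z)$ converge to $f$ in $A(p,q)$, and each $f_\rho$ is a uniform limit of polynomials on $\clos\ID$---and both $\Lambda$ and $\langle\cdot,g\rangle$ are continuous on $A(p,q)$, it follows that $\Lambda=Tg$. Hence $T$ is a bounded bijection with bounded inverse, i.e.\ a continuous linear isomorphism of $A(p',q')$ onto $A(p,q)^*$.

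\textbf{The main obstacle} is the one substantive input used above: boundedness of the Bergman projection $P$ on $L(p',q')$ for $1<p',q'<\infty$. I would prove this by a Schur test with the test weight $w(z)=(1-|z|^2)^{-\eps}$ for a small $\eps$ chosen in terms of $p',q'$, using the classical estimate $\int_\ID(1-|w|^2)^a|1-z\bar w|^{-(2+a+b)}\,dA(w)\asymp(1-|z|^2)^{-b}$; the key feature is that a single power weight, depending only on $|z|$, simultaneously controls the angular ($p'$) and radial ($q'$) integrations, so the one-variable Schur argument goes through verbatim in the vector-valued formulation. This boundedness can also simply be quoted from the literature on mixed-norm spaces. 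Everything else in Part~3 is bookkeeping with Fubini and Fatou.
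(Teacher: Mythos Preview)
The paper does not prove this proposition at all; it simply states the results and cites \cite{gadbois} and \cite{benedek}. Your proposal supplies a self-contained argument, and it is essentially correct. Parts~1 and~2 are the standard arguments. For Part~3 the route through Hahn--Banach, the Benedek--Panzone duality $L(p,q)^*\cong L(p',q')$, and the Bergman projection is the natural one, and the Schur-test proof you sketch for the boundedness of $P$ on $L(p',q')$ is the same technique the paper itself employs later, in the proof of Lemma~\ref{lower-means}, for a closely related kernel.

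One small technical point in Part~3: in justifying Fubini for the identity $\langle z^n,Ph\rangle=\int_\ID z^n\bar h\,dA$ you assert that $h\in L^1(dA)$ suffices. But $\int_\ID|1-\bar z w|^{-2}\,dA(z)\asymp\log\dfrac{1}{1-|w|}$ is not bounded in $w$, so absolute convergence of the double integral actually requires $\int_\ID|h(w)|\log\dfrac{1}{1-|w|}\,dA(w)<\infty$. This does hold, by H\"older against $\log\dfrac{1}{1-|\cdot|}\in L(p,q)$, so the argument goes through; alternatively, run the whole thing with a weighted projection $P_\alpha$ ($\alpha>0$), which reproduces analytic functions equally well and for which the absolute double integral is immediately finite. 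This is bookkeeping, not a genuine gap.
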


\subsection{Discrete versions of mixed norms}\label{sec:discrete}

Let $f$ be analytic in $\ID $. Then $|f|^p$ is subharmonic for any
$p>0$. This implies that the integral means
\begin{equation*}
  M_p(f,r)=\left(\dfrac{1}{2\pi}\int_{-\pi}^\pi |f(re^{i\theta})|^p
  \,d\theta\right)^{1/p}
\end{equation*}
are increasing function of $r$. Let us adopt the notations
$A_n, r_n, \beta, L, L_n, Q_{n,k}$ used in Section~\ref{sec:definitions}. The
norm $\norm{ f}^q_{A(p,q)}$ can then be replaced with a summation
\begin{equation*}
  \norm{ f}^q_{A(p,q)} =\sum_{n=1}^\infty \int_{r_{n-1}}^{r_n}
  M_p(f,r)^q \,2r\,dr.
\end{equation*}
If $r_{n-1} < r < r_n < r' < r_{n+1}$, then
\begin{equation*}
  M_p(f,r)^p \leq M_p(f,r')^p.
\end{equation*}
Integrate this inequality with respect to $2r'\,dr'$ from $r_n$
to $r_{n+1}$ to get
\begin{equation*}
  (r_{n+1}^2 - r_n^2)M_p(f,r)^p \leq \int_{r_n}^{r_{n+1}} M_p(f,r')^q
  \,2r'\,dr' \leq \dfrac{1}{\pi}\int_{A_{n+1}} |f(z)|^p
  \,dA(z).
\end{equation*}
This gives
\begin{equation*}
  M_p(f,r)^p \leq \dfrac{1}{|A_{n+1}|}\int_{A_{n+1}} |f(z)|^p
  \,dA(z)
\end{equation*}
where absolute bars indicate the area of a set. We can now raise both
sides to the power $q/p$ and integrate with respect to $2r\,dr$
from $r_{n-1}$ to $r_n$:
\begin{equation*}
  \begin{split}
    \int_{r_{n-1}}^{r_n} M_p(f,r)^q \,2r\,dr
    &\leq (r_n^2 - r_{n-1}^2) \left(\dfrac{1}{|A_{n+1}|}\int_{A_{n+1}}
        |f(z)|^p \,dA(z)\right)^{q/p} \\
    &\leq 2\beta^{n-1} \left(\dfrac{1}{|A_{n+1}|}\int_{A_{n+1}} |f(z)|^p
        \,dA(z)\right)^{q/p}.
  \end{split}
\end{equation*}
A similar argument but integrating first in $r$ and then $r'$ gives
\begin{equation*}
 \begin{split}
    \int_{r_n}^{r_{n+1}} M_p(f,r)^q \,2r\,dr
    &\geq (r_{n+1}^2 - r_n^2) \left(\dfrac{1}{|A_n|}\int_{A_n} |f(z)|^p
        \,dA(z)\right)^{q/p} \\
    &\geq \beta^n(1-\beta) \left(\dfrac{1}{|A_n|}\int_{A_n} |f(z)|^p
        \,dA(z)\right)^{q/p}.
  \end{split}
\end{equation*}
Summing both these inequalities gives us (for some constant C depending
only on $\beta$)
\begin{multline*}
%\begin{equation*}
%\begin{split}
  \sum_{n=2}^\infty \dfrac{1-r_n}{C} \left(\dfrac{1}{|A_n|}\int_{A_n}
    |f(z)|^p \,dA(z)\right)^{q/p}
%&
  \leq \norm{ f}^q_{A(p,q)} \\
%&
  \leq \sum_{n=1}^\infty C(1-r_n) \left(\dfrac{1}{|A_n|}\int_{A_n}
    |f(z)|^p \,dA(z)\right)^{q/p}.
%\end{split}
%\end{equation*}
\end{multline*}
Since $M_p(f,r)$ is increasing, the integral of $|f|^p$ over $A_1$ is
less than a constant times the integral over $A_2$ and so we can include
$n=1$ in the sum on the left, provided we increase the constant $C$.
Thus we obtain the following new norm
\begin{equation}
  \triplenorm{f}^q_{A(p,q)} =
    \sum_{n=1}^\infty (1-r_n)
    \left(\dfrac{1}{|A_n|}\int_{A_n} |f(z)|^p \,dA(z)\right)^{q/p}.
\end{equation}
which is equivalent to the usual norm $\norm{ f}_{A(p,q)}$ (that is
$\triplenorm{f} \leq C\norm{f}$ and $\norm{f} \leq C\triplenorm{f}$ for
some constant $C$ depending only on $\beta$).

The integral over the annulus $A_n$ can be written as a sum (on $k$) of
integrals over $Q_{n,k}$:
\begin{equation*}
  \triplenorm{f}^q_{A(p,q)}
    =
    \sum_{n=1}^\infty (1-r_n) \left(\dfrac{1}{|A_n|} \sum_{k=1}^{L_n}
    \int_{Q_{n,k}} |f(z)|^p \,dA(z)\right)^{q/p}.
\end{equation*}
Or, since $|A_n|=2L^n|Q_{n,k}|$:
\begin{equation*}
  \triplenorm{f}^q_{A(p,q)}
    =
    \sum_{n=1}^\infty (1-r_n) \left(\dfrac{L^{-n}}{2} \sum_{k=1}^{L_n}
    \dfrac{1}{|Q_{n,k}|} \int_{Q_{n,k}} |f(z)|^p
    \,dA(z)\right)^{q/p}.
\end{equation*}
Or, since $L^{-n}=\beta^n=1-r_n$:
\begin{equation}\label{eqn:discretenorm}
  \triplenorm{f}^q_{A(p,q)}
    =
    \sum_{n=1}^\infty (1-r_n) \left(\dfrac{1-r_n}{2} \sum_{k=1}^{L_n}
    \dfrac{1}{|Q_{n,k}|} \int_{Q_{n,k}} |f(z)|^p
    \,dA(z)\right)^{q/p}.
\end{equation}
We will need to a few properties of the set $Q_{n,k}$, especially in
connection with the pseudohyperbolic metric $\rho$, defined by
\begin{equation*}
  \rho(z,w) \equiv \dfrac{|z-w|}{|1-\bar{w}z|} \;\; \text{(for
  $z,w\in\ID $)}
\end{equation*}
and the associated hyperbolic disks defined by
\begin{align*}
  E(z,R) &\equiv \{w:\rho(z,w)<R\} \\
  \clos{E}(z,R) &\equiv \{w:\rho(z,w)\leq R\}
\end{align*}

\begin{theorem}
There exist constants $0<r<R<1$ depending only on $\beta$ such that
\begin{enumerate}
  \item For every pair $(n,k)$, if $z \in Q_{n,k}$ then $Q_{n,k} \subset
    \clos{E}(z,R)$
  \item For every pair $(n,k)$, there exists $z \in Q_{n,k}$ with $E(z,r)
    \subset Q_{n,k}$.
\end{enumerate}
\end{theorem}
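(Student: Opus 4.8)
The plan is to reduce everything to the identity
$1 - \rho(z,w)^2 = (1-|z|^2)(1-|w|^2)/|1-\bar w z|^2$ (valid for all $z,w\in\ID$), which is the one non‑routine input and is exactly what makes $\rho$ manageable on these cells; the naive route through $\rho(z,w)=|z-w|/|1-\bar wz|$ combined with $|1-\bar wz|\geq 1-|z||w|$ is hopeless, since those estimates are so lossy they can even yield bounds exceeding $1$. With the identity in hand both parts become elementary geometry of the polar rectangles.

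For part~1 I would fix $(n,k)$ and arbitrary $z,w\in Q_{n,k}$, write $z=\rho_1 e^{i\theta_1}$ and $w=\rho_2 e^{i\theta_2}$, and bound the three ingredients separately. The numerator is bounded below using $|z|,|w|<r_{n+1}=1-\beta^{n+1}$, giving $1-|z|^2,\ 1-|w|^2>\beta^{n+1}$. The denominator is bounded above by expanding $|1-\bar wz|^2=(1-\rho_1\rho_2)^2+2\rho_1\rho_2\bigl(1-\cos(\theta_1-\theta_2)\bigr)$ and using, on the one hand, $\rho_1,\rho_2\geq r_n=1-\beta^n$ (so the first term is at most $4\beta^{2n}$) and, on the other, $|\theta_1-\theta_2|<\beta^n\pi$ (the angular width of $Q_{n,k}$, so the second term is at most $\pi^2\beta^{2n}$). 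This yields $1-\rho(z,w)^2>\beta^2/(4+\pi^2)$, hence $\rho(z,w)\leq R:=\sqrt{1-\beta^2/(4+\pi^2)}<1$, a constant depending only on $\beta$; as $w$ was arbitrary, $Q_{n,k}\subset\clos E(z,R)$.

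For part~2 I would take $z_0$ to be the centre of $Q_{n,k}$ (modulus $\tfrac12(r_n+r_{n+1})$, argument $(k-\tfrac12)\beta^n\pi$), so that $1-|z_0|=\tfrac12(1+\beta)\beta^n$. Two elementary facts are needed. First, the Euclidean distance from $z_0$ to $\partial Q_{n,k}$ is at least $c_1\beta^n$ for some $c_1=c_1(\beta)>0$: the distance to the two bounding circles is exactly $\tfrac12(r_{n+1}-r_n)=\tfrac12(1-\beta)\beta^n$, while the distance to either radial edge is at least the perpendicular distance from $z_0$ to the corresponding ray, namely $|z_0|\sin(\tfrac12\beta^n\pi)$, which by Jordan's inequality is $\geq|z_0|\beta^n$ and hence at least a $\beta$‑dependent multiple of $\beta^n$. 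Second, $E(z_0,r)$ is itself a Euclidean disc (the usual M\"obius computation), and is contained in the Euclidean disc about $z_0$ of radius $\dfrac{(1-|z_0|^2)r}{1-r|z_0|}\leq\dfrac{(1+\beta)r}{1-r}\,\beta^n$. Choosing $r=r(\beta)>0$ small enough that $\dfrac{(1+\beta)r}{1-r}<c_1$, and then shrinking $r$ further if necessary to guarantee $r<R$, gives $E(z_0,r)\subset Q_{n,k}$.

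The only place needing genuine care is the innermost annulus: when $n=0$ one has $r_0=0$ and $Q_{0,1},Q_{0,2}$ are the two half‑discs of radius $r_1=1-\beta$, so the formulas above must be read accordingly; both parts are then checked directly (the centre of such a half‑disc is at Euclidean distance $\tfrac12(1-\beta)$ from its boundary, and in part~1 one has $1-|z|^2\geq1-r_1^2\geq\beta$, so the same constants $R$, $c_1$, $r$ work). I do not expect any deep difficulty; the main ``obstacle'' is really the bookkeeping — keeping the powers of $\beta$ straight and correctly identifying which edge of $Q_{n,k}$ lies closest to the centre.
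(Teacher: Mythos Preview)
Your argument is correct. The paper actually states this theorem without proof and immediately proceeds to use the constants $r_\beta$ and $R_\beta$, so there is nothing to compare against; your proposal supplies the omitted verification.

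A couple of minor remarks. In part~1 your estimates go through uniformly in $n\ge 0$ as written: for $n=0$ the bound $(1-\rho_1\rho_2)^2\le 4\beta^{0}=4$ is trivially true and the numerator bound $1-|z|^2>\beta$ still holds, so the separate treatment of the innermost annulus is not strictly needed there. In part~2 it is worth saying explicitly that the perpendicular distance from $z_0$ to the full ray is a \emph{lower} bound for the distance to the radial edge segment (since if the foot of the perpendicular falls outside the segment the nearest point is an endpoint, which is farther), and that for $n\ge 1$ one has $|z_0|>r_n\ge 1-\beta$, giving the uniform constant $c_1=\tfrac12(1-\beta)$ (the minimum of the arc distance $\tfrac12(1-\beta)\beta^n$ and the ray distance $\ge(1-\beta)\beta^n$). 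With those clarifications the bookkeeping is complete.
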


Let us denote by $r_\beta$ the largest possible $r$ and $R_\beta$ the
smallest possible $R$ from this theorem. Let $\tilde{z}_{n,k}$ denote
the $z$ from statement $2$ for $r=r_\beta$ and let $z_{n,k}$ denote the
Euclidean center of $E_{n,k} \equiv E(\tilde{z}_{n,k},r_\beta)$.

A fairly easy estimate of the areas involved show that there are
constants $C_1$ and $C_2$ depending only on $\beta$ such that for all
pairs $(n,k)$
\begin{equation*}
  |E_{n,k}| \leq |Q_{n,k}| \leq C_1|E_{n,k}|
\end{equation*}
and for any $z \in Q_{n,k}$
\begin{equation*}
  |Q_{n,k}| \leq |E(z,R_\beta)| \leq C_2|Q_{n,k}|.
\end{equation*}
Then we have the estimate, for every analytic function $f$:
\begin{equation*} |f(z_{n,k})|^p
  \leq \dfrac{1}{|E_{n,k}|} \int_{E_{n,k}} |f|^p \,dA
  \leq \dfrac{C_1}{|Q_{n,k}|} \int_{Q_{n,k}} |f|^p \,dA.
\end{equation*}
Note also that for any $z \in A_n$ we have
\begin{equation}\label{ineq:noname01}
  \frac{1}{C} \leq \frac{1-|z|^2}{1-r_n} \leq C
\end{equation}
for some constant $C$ depending only on $\beta$. And so for any $f$ in $A(p,q)$,
\begin{equation}
  \left(
  \sum_{n=1}^\infty (1-r_n)
  \left( \sum_{k=1}^{L_n} (1-|z_{n,k}|^2)|f(z_{n,k})|^p \right)^{q/p}
  \right)^{1/q}
  \leq C\triplenorm{f}_{A(p,q)}
  \leq C \norm{ f }_{A(p,q)}
\end{equation}
for some constant $C=C(\beta)$. This show that evaluation at the points
of the sequence $(z_{n,k})$ is a bounded map from $A(p,q)$ into
$l^{p,q}((z_{n,k}))$. This result is also true for a uniformly discrete
sequence $\Gamma = (z_m) \equiv (z_{n,k})$ ($(z_{n,k})$ is defined as in
Section~\ref{sec:definitions}). Proposition \ref{prop:uniformlydiscrete}
implies that there is an upper bound $M$ on the number of points of
$\Gamma$ in any $Q_{n,k}$. So, if $\Gamma$ is uniformly discrete then it
is the union of at most $M$ sequences each of which has at most one
point in any $Q_{n,k}$.

\begin{theorem}\label{thm:discretebounded}
If $\Gamma = \{ z_m \in \ID \} = \{z_{n,k} \in \ID : n \geq
1, 1 \leq k \leq L_n \}$ is uniformly discrete then the operator
$R_\Gamma$ taking $f$ to $(f(z_{n,k}))$ is bounded from $A(p,q)$ to
$l^{p,q}(\Gamma)$.
\end{theorem}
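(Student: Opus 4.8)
The plan is to reduce the claim to the estimate already established in the excerpt for sequences having at most one point in each $Q_{n,k}$, and then use the splitting of a uniformly discrete $\Gamma$ into finitely many such subsequences. Concretely, the paragraph preceding the theorem established (via the submean-value inequality applied on the disks $E_{n,k}$, together with $|E_{n,k}|\le|Q_{n,k}|\le C_1|E_{n,k}|$ and \eqref{ineq:noname01}) that for the particular sequence $(z_{n,k})$ of Euclidean centers, evaluation is bounded from $A(p,q)$ into $l^{p,q}((z_{n,k}))$. The first step is to observe that this argument did not really use that the $z_{n,k}$ were those specific centers: if $w_{n,k}$ is \emph{any} point of $\ID$ lying in $Q_{n,k}$ (with at most one such point per $(n,k)$), then the disk $\clos E(w_{n,k},R_\beta)$ contains $Q_{n,k}$, so by subharmonicity of $|f|^p$,
\begin{equation*}
  |f(w_{n,k})|^p \leq \frac{1}{|E(w_{n,k},R_\beta)|}\int_{E(w_{n,k},R_\beta)}|f|^p\,dA,
\end{equation*}
and since $|E(w_{n,k},R_\beta)|$ is comparable to $|Q_{n,k}|$ by the area estimates quoted in the excerpt, we get $|f(w_{n,k})|^p \le C\,|Q_{n,k}|^{-1}\int_{Q_{n,k}}|f|^p\,dA$ with $C=C(\beta)$. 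Plugging this into the discrete norm \eqref{eqn:discretenorm} and using \eqref{ineq:noname01} to replace $1-|w_{n,k}|^2$ by $1-r_n$ shows $\norm{(f(w_{n,k}))}_{l^{p,q}} \le C\triplenorm{f}_{A(p,q)} \le C\norm{f}_{A(p,q)}$.

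The second step handles the general uniformly discrete $\Gamma$. By Proposition \ref{prop:uniformlydiscrete}(2) there is an integer $M$, depending only on $\delta$ and $\beta$, bounding the number of points of $\Gamma$ in any $Q_{n,k}$ (each $Q_{n,k}$ sits inside a pseudohyperbolic disk of radius $R_\beta$, on which the counting function $n(\Gamma,\cdot,R_\beta)$ is $O(1/(1-r_n))$, while $|Q_{n,k}|$ is comparable to $(1-r_n)^2$; one must check the bound is uniform, i.e.\ that the number of $\Gamma$-points in a set of hyperbolic diameter $O(1)$ is $O(1)$, which follows from part (2) applied at a point of $Q_{n,k}$). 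Hence $\Gamma = \Gamma^{(1)}\cup\dots\cup\Gamma^{(M)}$, where each $\Gamma^{(i)}$ meets every $Q_{n,k}$ in at most one point. Apply Step 1 to each $\Gamma^{(i)}$: for $(a_m)=(f(z_m))$ restricted to $\Gamma^{(i)}$ we get $\norm{\,\cdot\,}_{l^{p,q}(\Gamma^{(i)})} \le C\norm{f}_{A(p,q)}$. Finally, since $l^{p,q}$ is built from an $\ell^q$ of $\ell^p$ blocks, the norm of the full sequence is dominated by a constant (depending on $M$, $p$, $q$) times the sum of the norms over the pieces $\Gamma^{(i)}$ — using the quasi-triangle inequality for $\ell^p$ within each annulus and for $\ell^q$ across annuli, which costs only a factor $M^{1/p}$ or $M^{1/q}$ or $M$ depending on whether the exponents are $\ge1$. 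Combining gives $\norm{R_\Gamma f}_{l^{p,q}(\Gamma)} \le C(\beta,\delta,p,q)\norm{f}_{A(p,q)}$, which is the assertion.

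The only genuinely delicate point is the uniformity in Step 2: we need the bound $M$ on the number of $\Gamma$-points per $Q_{n,k}$ to be independent of $(n,k)$. This is exactly what Proposition \ref{prop:uniformlydiscrete}(2) delivers once one notes that each $Q_{n,k}$ is contained in a pseudohyperbolic ball $E(z,R_\beta)$ (Theorem \ref{...}, part 1) with $R_\beta<1$ fixed, so $n(\Gamma,z,R_\beta) \le (2/\delta+1)^2/(1-R_\beta^2)$, a constant. Everything else is bookkeeping with the equivalences $|Q_{n,k}|\asymp|E_{n,k}|\asymp(1-r_n)^2$ and $1-|z|\asymp 1-r_n$ on $A_n$, all with constants depending only on $\beta$, already recorded in the excerpt. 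I would also remark that the argument shows the operator norm of $R_\Gamma$ depends on $\Gamma$ only through $\beta$, $\delta$, $p$, and $q$.
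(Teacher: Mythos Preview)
Your reduction in Step~2 (splitting $\Gamma$ into at most $M$ subsequences with at most one point per $Q_{n,k}$) is exactly the paper's route, and your justification of the uniform bound $M$ via Proposition~\ref{prop:uniformlydiscrete}(2) is fine.

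The gap is in Step~1. You take an arbitrary $w_{n,k}\in Q_{n,k}$ and use the containment $Q_{n,k}\subset \clos E(w_{n,k},R_\beta)$ together with the submean value inequality to get
\[
  |f(w_{n,k})|^p \le \frac{C}{|E(w_{n,k},R_\beta)|}\int_{E(w_{n,k},R_\beta)}|f|^p\,dA,
\]
and then you claim this is $\le C\,|Q_{n,k}|^{-1}\int_{Q_{n,k}}|f|^p\,dA$. But the containment goes the wrong way for that: since $Q_{n,k}$ is the \emph{smaller} set, you cannot bound the integral over $E(w_{n,k},R_\beta)$ by the integral over $Q_{n,k}$. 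In general a point $w_{n,k}$ near the boundary of $Q_{n,k}$ admits no pseudohyperbolic disk around it contained in $Q_{n,k}$, so this inequality simply fails.

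The paper fixes this by working with a \emph{small} disk $E_{n,k}=E(w_{n,k},r_\beta)$ and accepting that it may spill out of $Q_{n,k}$: one observes that $E_{n,k}$ meets only a bounded number (at most $L+4$, depending only on $\beta$) of neighboring rectangles $Q_{n',k'}$, so
\[
  |f(w_{n,k})|^p \le \frac{1}{|E_{n,k}|}\int_{E_{n,k}}|f|^p\,dA
  \le C\sum_{Q_{n',k'}\cap E_{n,k}\ne\emptyset}\frac{1}{|Q_{n',k'}|}\int_{Q_{n',k'}}|f|^p\,dA.
\]
Inserting this into the $l^{p,q}$ norm and using that each $Q_{n',k'}$ appears in only boundedly many such sums (with $|n-n'|\le 1$), one recovers $\triplenorm{f}_{A(p,q)}$ up to a constant. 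Your argument is easily repaired along these lines; the rest of your write-up then goes through.
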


\begin{proof}
By the remarks preceding the theorem, we can suppose that there is at
most one point of $\Gamma$ in each $Q_{n,k}$. If $z_{n,k} \in \Gamma
\cap Q_{n,k}$, let it be the Euclidean center of a disk $E_{n,k}$ with
hyperbolic radius $r_\beta$. $E_{n,k}$ needs not be contained in
$Q_{n,k}$, but if it intersects the boundary of $Q_{n,k}$ then it is
contained in the union of those $Q_{n',k'}$ that are adjacent to the
part of the boundary intersected. This union contains at most one
additional $Q_{n,k'}$ in the same annulus $A_n$ as $Q_{n,k}$ and at most
$L+1$ additional $Q_{n+1,k'}$ in the next annulus $A_{n+1}$ (or at most
2 additional $Q_{n-1,k'}$ in the previous annulus $A_{n-1}$). Thus
\begin{equation*}
  |f(z_{n,k})|^p
  \leq \dfrac{1}{|E_{n,k}|} \int_{E_{n,k}} |f|^p \,dA
  \leq C \sum_{Q_{n',k'} \cap D_{n,k} \neq \emptyset}
    \dfrac{1}{|Q_{n',k'}|} \int_{Q_{n',k'}} |f|^p \,dA
\end{equation*}
It is now easy to see from \ref{eqn:discretenorm} that $\lVert f
\rVert_{l^{p,q}(\Gamma)}$ is less than a finite multiple of
$\triplenorm{f}_{A(p,q)}$.
\end{proof}

\subsection{Necessity of separation}\label{sec:separation}

One might wonder why $l^{p,q}$ is chosen to be the target space for
interpolation. It turns out that relatively mild assumptions on $A$ and
$X$ force an interpolation sequence $\Gamma$ to be uniformly discrete.
Moreover, Theorem \ref{thm:discretebounded} shows that if $\Gamma \equiv
(z_k)$ is uniformly discrete then $R_\Gamma(A(p,q)) \subset l^{p,q}$
with $R_\Gamma(f)
= (f(z_k))$. This suggests we choose $l^{p,q}$ for the interpolation
problem. We now turn to the proofs of these assertions.

Let $\Gamma = \{z_k:k=1,2,3,\ldots\} \subset \ID $. Let us define
$M_a(z)=(a-z)/(1-\bar{a}z)$. Let $e^{(k)}$ denote the sequence having a
$1$ in position $k$ and $0$ elsewhere. Let $P_k$ be the operator of
projection onto the $k$th component: if $w=(w_j)$ then $P_k(w)=w_k
e^{(k)}$.

\begin{theorem}\label{necessityofseparation}
Let $A$ be a Banach space of analytic functions on $\ID $ and
$\Gamma$ a sequence of distinct points in $\ID $. Let X be a
Banach space of sequences, the sequences being indexed the same as
$\Gamma$. Let $R_\Gamma$ be the operator that takes functions to sequences via
$R_\Gamma(f)_k = f(z_k)$. Assume the following:
\begin{enumerate}
  \item For every $k$, the sequence $e^{(k)}$ belongs to $X$.
  \item For every $k$, $P_k$ takes $X$ continuously into $X$ and $\sup_k
    \norm{ P_k } < \infty$.
  \item There is a constant $C_A$ such that for any $k$, if $f\in A$ and
    $f(z_k)=0$, then $f/M_{z_k} \in A$ and $\norm{ f/M_{z_k} }_A \leq
    C_A\norm{ f }_A$.
  \item Covergence in $A$ implies pointwise convergence on $\Gamma$.
\end{enumerate}
If the operator $R_\Gamma$ satisfies $R_\Gamma(A)=X$, then $\Gamma$ is uniformly discrete.
\end{theorem}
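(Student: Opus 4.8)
The plan is to combine the open mapping theorem with a Möbius-multiplier ``amplification'' trick. First I would show that the evaluation operator $R_\Gamma\colon A\to X$, $f\mapsto(f(z_k))$, is bounded. It is linear and, by the interpolation hypothesis $R_\Gamma(A)=X$, it maps into $X$; to invoke the closed graph theorem, suppose $f_j\to f$ in $A$ and $R_\Gamma(f_j)\to y$ in $X$. Hypothesis~(4) gives $f_j(z_k)\to f(z_k)$ for each $k$, while hypothesis~(2) gives $f_j(z_k)\,e^{(k)}=P_k(R_\Gamma(f_j))\to P_k(y)=y_k\,e^{(k)}$ in $X$; since $e^{(k)}$ is a nonzero element of $X$ (a genuine sequence space), homogeneity of the norm forces $f_j(z_k)\to y_k$, hence $y=R_\Gamma(f)$. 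So $R_\Gamma$ is bounded; being also surjective between Banach spaces, the open mapping theorem yields a finite interpolation constant $M$ such that every $y\in X$ equals $R_\Gamma(f)$ for some $f\in A$ with $\norm{f}_A\le M\norm{y}_X$.

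The key step is to exploit a near-collision. Fix indices $m\neq n$; I want a lower bound for $\rho(z_n,z_m)$ independent of the pair. Interpolating the sequence $e^{(n)}$ (which lies in $X$ by hypothesis~(1)) gives $f\in A$ with $f(z_n)=1$, $f(z_k)=0$ for $k\neq n$, and $\norm{f}_A\le M\norm{e^{(n)}}_X$. Because $f(z_m)=0$, hypothesis~(3) lets us pass to $g\defeq f/M_{z_m}\in A$ with $\norm{g}_A\le C_A\norm{f}_A$, and then
\begin{equation*}
  |g(z_n)|=\frac{|f(z_n)|}{|M_{z_m}(z_n)|}=\frac{1}{\rho(z_n,z_m)}.
\end{equation*}
Applying $P_n$ to $R_\Gamma(g)$ and chaining the estimates gives
\begin{equation*}
  \frac{\norm{e^{(n)}}_X}{\rho(z_n,z_m)}
  =\norm{P_n(R_\Gamma(g))}_X
  \le\bigl(\sup_k\norm{P_k}\bigr)\,\norm{R_\Gamma}\,\norm{g}_A
  \le\bigl(\sup_k\norm{P_k}\bigr)\,\norm{R_\Gamma}\,C_A\,M\,\norm{e^{(n)}}_X ,
\end{equation*}
and cancelling $\norm{e^{(n)}}_X>0$ yields $\rho(z_n,z_m)\ge\delta$ with $\delta\defeq\bigl((\sup_k\norm{P_k})\,\norm{R_\Gamma}\,C_A\,M\bigr)^{-1}>0$. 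Since this bound does not depend on $m,n$, the sequence $\Gamma$ is uniformly discrete with separation constant at least $\delta$.

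As for difficulty: almost everything here is routine once the right move is spotted, and the genuinely clever ingredient is the decision to divide by $M_{z_m}$ rather than $M_{z_n}$ --- this converts the ``wasted'' zero that interpolation of $e^{(n)}$ is forced to place at the nearby point $z_m$ into a value of size $1/\rho(z_n,z_m)$ at $z_n$, a value the uniform bound on the projections $P_k$ cannot absorb unless the two points stay a fixed distance apart. The only part needing genuine care is the closed graph argument for boundedness of $R_\Gamma$: one must check that hypotheses~(2) and~(4) together pin down the limit, and that $e^{(k)}\neq0$ (so that the norm can be cancelled both there and at the end). I do not expect any obstacle beyond that bookkeeping.
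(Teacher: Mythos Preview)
Your proof is correct and follows essentially the same route as the paper's: closed graph theorem for boundedness of $R_\Gamma$, open mapping theorem for the interpolation constant, then interpolate a unit-vector target, divide out the M\"obius factor at the neighbouring point, and read off the separation from the resulting chain of norm inequalities. The only cosmetic difference is normalization---the paper interpolates $M_{z_n}(z_k)\,e^{(k)}$ so that the quotient has value exactly $1$ at $z_k$, whereas you interpolate $e^{(n)}$ and let the factor $1/\rho(z_n,z_m)$ appear on the left---but the algebra and the resulting constant $\bigl((\sup_k\norm{P_k})\,\norm{R_\Gamma}\,C_A\,M\bigr)^{-1}$ are identical.
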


\begin{proof}
Condition 2 and 4 imply that $R_\Gamma$ has closed graph and so is bounded.
Since it is also onto we can apply the open mapping principle to obtain
an interpolation constant $K$: every sequence $w\in X$ is $R_\Gamma f$ for some
$f\in A$ with $\norm{ f}_A \leq K\norm{ w}_X$.

Let $z_k,z_n\in\Gamma, \; k\neq n$. The assumptions imply that the
sequence $w=M_{z_n}(z_k) e^{(k)}$ belongs to $X$. Let $f\in A$ with
$\norm{ f}_A \leq K\norm{ w}_X$. Since $f$ vanishes at $z_n$
we have $g=f/M_{z_n}\in A$. Note that $g(z_k)=1$, and so
$P_k(R_\Gamma(g))=e^{(k)}$. Then we have the following inequalities:
\begin{equation*}
  \norm{ e^{(k)}}
    \leq C_1\norm{ R_\Gamma(g)}
    \leq C_2\norm{ g}
    \leq C_3\norm{ f}
    \leq C_4\norm{ w}
    = C_4|M_{z_n}(z_k)|\norm{ e^{(k)}},
 \end{equation*}
where $C_1=\sup_k\norm{ P_k}, \;C_2=C_1\norm{ R_\Gamma}, \;C_3=C_A
C_2,$ and $C_4=KC_3$. We immediately obtain
$\rho(z_k,z_n)=|M_{z_n}(z_k)|\geq 1/C_4$.
\end{proof}

Certainly, $l^{p,q}(\Gamma)$ satisfies the requirements. It is not
immediately obvious that $A(p,q)$ satisfies \text{condition 3}. We
address that in the next result.

\begin{corollary}\label{cor:necessityofseparation}
Interpolation sequences for $A(p,q)$ are uniformly discrete.
\end{corollary}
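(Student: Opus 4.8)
The plan is to deduce the corollary from Theorem~\ref{necessityofseparation}, applied with $A=A(p,q)$ and $X=l^{p,q}(\Gamma)$; note that the hypothesis $R_\Gamma(A)=X$ there is precisely the definition of an interpolation sequence for $A(p,q)$. Although Theorem~\ref{necessityofseparation} is phrased for Banach spaces, its proof uses only the closed graph and open mapping theorems — valid for complete metrizable linear spaces — together with the homogeneity of the (quasi)norms, so it applies verbatim to $A(p,q)$ and $l^{p,q}$ for all $0<p,q<\infty$. Conditions~1, 2 and~4 are routine: $e^{(k)}\in l^{p,q}(\Gamma)$ since its defining sum has a single nonzero term; $\norm{P_k}\le1$ because $\norm{w}_{l^{p,q}(\Gamma)}$ already dominates the contribution of any one coordinate; and convergence in $A(p,q)$ forces pointwise convergence by the growth estimate of Proposition~\ref{prop:basicproperties}(1). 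Everything therefore reduces to verifying Condition~3, i.e.\ a uniform division estimate: there is a constant $C=C(\beta)$ such that if $f\in A(p,q)$, $a=z_k$, and $f(a)=0$, then $g:=f/M_a$ lies in $A(p,q)$ with $\norm{g}_{A(p,q)}\le C\norm{f}_{A(p,q)}$.

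To prove this I would pass to the equivalent discrete norm $\triplenorm{\cdot}_{A(p,q)}$ of \eqref{eqn:discretenorm} and estimate the contribution of each annulus $A_n$ separately, splitting $A_n$ according to whether $\rho(a,z)\ge\tfrac12$ or $\rho(a,z)<\tfrac12$. On the ``far'' part one has $|g(z)|=|f(z)|/\rho(a,z)\le2|f(z)|$ pointwise, so that part contributes at most a constant times $\triplenorm{f}^q_{A(p,q)}$. On the ``near'' part $E(a,\tfrac12)$ one exploits the zero of $f$ at $a$: writing $h(w)=f(M_a(w))/w$, which is analytic in $\ID$ because $f(M_a(0))=f(a)=0$, we have $g=h\circ M_a$, and the maximum principle on $|w|\le\tfrac34$ together with the sub-mean value property of $|f|^p$ yields, for every $z\in E(a,\tfrac12)$,
\[
  |g(z)|^p \le C\,\frac{1}{|E(a,7/8)|}\int_{E(a,7/8)}|f|^p\,dA
\]
with $C$ absolute. (This Schwarz-type gain at $a$ is exactly what keeps $g$ bounded near $a$, where $1/M_a$ blows up.)

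To conclude I would invoke two purely geometric facts about $\beta$: a pseudohyperbolic disk of bounded radius meets only boundedly many of the annuli $A_n$, with the bound depending only on $\beta$, and its area is comparable — with constants depending only on $\beta$ — across those annuli. Hence, if $a\in A_{n_0}$, both $E(a,\tfrac12)$ and $E(a,7/8)$ lie in $\bigcup_{|n-n_0|\le c}A_n$ for some $c=c(\beta)$, with $|A_{n'}|\asymp|A_{n''}|$ for all such indices; feeding the pointwise bound above into $\triplenorm{g}^q_{A(p,q)}$ then shows the near parts contribute at most $C(\beta)\,\triplenorm{f}^q_{A(p,q)}$. Adding the two contributions and using $\triplenorm{\cdot}\asymp\norm{\cdot}_{A(p,q)}$ gives Condition~3 with a constant depending only on $\beta$, hence independent of $a=z_k$ and of $f$, which completes the proof.

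The main obstacle is exactly this division estimate, and the reason it is not routine is the one emphasized in the abstract: $A(p,q)$, unlike the Bergman space $A^p$, is not conformally invariant, so one cannot simply precompose $f$ with $M_a$ and quote invariance of the norm. The estimate must instead be carried out annulus by annulus, and the delicate bookkeeping is to track which annuli the pseudohyperbolic disks $E(a,\tfrac12)$ and $E(a,7/8)$ meet and to check that the areas of the pieces involved are mutually comparable with constants independent of $a$.
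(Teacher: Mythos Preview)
Your proposal is correct and follows essentially the same approach as the paper: both reduce to Condition~3 of Theorem~\ref{necessityofseparation}, pass to the discrete norm $\triplenorm{\cdot}$, split according to $\rho(a,z)\gtrless\tfrac12$, use $|g|\le2|f|$ on the far part, and invoke a Schwarz/maximum-principle estimate on the near part. The only cosmetic differences are that the paper bounds $\int_{E(a,1/2)}|g|^p\,dA$ by $\int_{E(a,4/5)\setminus E(a,1/2)}|g|^p\,dA$ via increasing integral means (then uses $|g|\le2|f|$ there), whereas you go straight to a pointwise bound for $|g|^p$ in terms of an average of $|f|^p$; and the paper chooses $\beta$ so the enlarged disk meets at most two annuli, whereas you fix $\beta$ and use bounded overlap.
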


\begin{proof}
We will show that $A(p,q)$ satisfies \text{condition 3} of the previous
theorem. Let $f$ vanish at a fixed $z_k$. Consider the disk
$D=D(z_k,1/2)=\{z:|M_{z_k}(z)<1/2\}$ and let $g=f/M_{z_k}$. On the
complement of this disk it is clear that $|g|<2|f|$. It will be enough
to show that $\norm{ g}_{A(p,q)} \leq \lVert g \chi_{\ID \backslash D}
\rVert_{L(p,q)}$ for any analytic function g, where the constant $C$
does not depend on $z_k$. For this it suffices to show that
\begin{equation*}
  \norm{ g\chi_{D}} \leq \norm{ g\chi_{\ID \backslash D}}.
\end{equation*}
The proof of this is essentially a sort of maximum principle. Its proof
would be a great deal easier if we were able to use a conformal map to
turn integral centered around $z_k$ to integrals centered around $0$.

We consider the equivalent norm $\triplenorm{.}$ on $A(p,q)$ obtained in
Section~\ref{sec:discrete}. Let $D'$ be the slightly larger disk with
pseudohyperbolic radius $\dfrac{1/2+1/2}{1+(1/2)(1/2)}=4/5$. If we
choose the parameter $\beta$ appropriately, we can arrange for $D'$ to
overlap at most two annuli $A_n$. For simplicity, let us temporarily
assume that $D'$ is contained in exactly one annulus $A_n$. We did not
extend the new norm to all of $L^{p,q}$, but it is clear that we can
apply it to any measurable function, though it needs not always be
finite. In the case $g\chi_D$ we get a single nonzero term in the
infinite sum:
\begin{equation*}
  \triplenorm{g\chi_D}^q_{A(p,q)}
    =
    (1-r_n)\left(\dfrac{1}{|A_n|}\int_D |g|^p \,dA \right)^{q/p}.
\end{equation*}
It is relatively straightforward to show that $\int_D |g|^p
\,dA \leq C\int_{D'\backslash D} |g|^p \,dA$
(conformally map to the disks of radius 1/2 and 4/5 centered at 0, use polar
coordinates and the fact that the integral on circles increases as the
radius increases). This gives us
\begin{equation*}
\begin{split}
 \triplenorm{g\chi_D}^q_{A(p,q)}
    &\leq C(1-r_n)\left(\dfrac{1}{|A_n|}\int_{D'\backslash D} |g|^p
        \,dA \right)^{q/p} \\
    &\leq C(1-r_n)\left(\dfrac{1}{|A_n|}\int_{A_n\backslash D} |g|^p
        \,dA \right)^{q/p} \\
    &\leq C \sum_n (1-r_n)\left(\dfrac{1}{|A_n|}\int_{A_n}
        |g\chi_{\ID \backslash D}|^p \,dA \right)^{q/p} \\
    &= C\triplenorm{g\chi_{\ID \backslash D}}^q_{A(p,q)}.
\end{split}
\end{equation*}
If $D'$ overlaps two annuli $A_{n-1}$ and $A_n$, then the second line
can be replaced by a sum of two terms.

So we established that $A(p,q)$ satisfies the conditions of Theorem
\ref{necessityofseparation}. Now if $\Gamma$ is an interpolation
sequence for $A(p,q)$, i.e. $R_\Gamma(A(p,q))=l^{p,q}(\Gamma)$, then $\Gamma$
is uniformly discrete thanks to Theorem \ref{necessityofseparation}.
\end{proof}

\subsection{Stability under perturbation}\label{sec:stability}

A property of interpolation sequences is their stability under
(hyperbolically) small perturbations. We start with the following lemma:

\begin{lemma}\label{lem:sta}
Let $\Gamma = (z_m)$ and $\Gamma' = (z'_m)$ be two sequences in
$\ID $ with no limits in $\ID $ such that $\rho(z_m,z'_m) <
\delta$ for all $m$. If $\delta$ is sufficiently small; then for every
sequence $(a_m)$ in $l^{p,q}(\Gamma)$, $(a_m)$ also belongs in
$l^{p,q}(\Gamma')$.
\end{lemma}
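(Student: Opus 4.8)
The plan is to prove the stronger assertion that
\[
  \norm{(a_m)}_{l^{p,q}(\Gamma')} \le C\,\norm{(a_m)}_{l^{p,q}(\Gamma)}
\]
with $C$ depending only on $p$, $q$, $\beta$ (and $\delta$); since $\rho$ is symmetric, the same argument with the roles of $\Gamma$ and $\Gamma'$ interchanged then gives the reverse inequality, so in fact the two norms are equivalent. The hypothesis that neither sequence accumulates in $\ID$ guarantees that every annulus $A_j$ contains only finitely many points of $\Gamma$ and of $\Gamma'$, so both norms are meaningful term by term.

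First I would invoke the standard comparison between the pseudohyperbolic and Euclidean metrics (see \cite[Chapter~2]{durenschuster}): if $\rho(z,w)<\delta<1$ then $1-|z|$ and $1-|w|$ are comparable, with constants depending only on $\delta$. Combining this with the fact that $z\in A_j$ forces $\beta^{j+1}<1-|z|\le\beta^j$, together with $1-r_j=\beta^j$, one produces an integer $N$ depending only on $\beta$ (here we use that $\delta$ is small) such that $z_m\in A_j$ implies $z'_m\in A_{j'}$ for some $j'$ with $|j'-j|\le N$; moreover $(1-r_{j'})^{1+q/p}\asymp(1-r_j)^{1+q/p}$ whenever $|j'-j|\le N$, with constants depending only on $\beta$ and $N$.

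Next I would regroup the two series. Put $S_\ell=\sum_{m:\,z_m\in A_\ell}|a_m|^p$ and $S'_j=\sum_{m:\,z'_m\in A_j}|a_m|^p$, so that $\norm{(a_m)}_{l^{p,q}(\Gamma)}^q=\sum_\ell(1-r_\ell)^{1+q/p}S_\ell^{q/p}$ and likewise for $\Gamma'$. By the previous step $S'_j\le\sum_{|\ell-j|\le N}S_\ell$. Raising to the power $q/p$ and using the elementary inequality $\bigl(\sum_{i=1}^{2N+1}b_i\bigr)^{q/p}\le c_N\sum_i b_i^{q/p}$, where $c_N=1$ if $q\le p$ and $c_N=(2N+1)^{q/p-1}$ if $q>p$, together with the comparability of the weights, yields
\[
  \sum_j(1-r_j)^{1+q/p}(S'_j)^{q/p}
  \ \le\ C\sum_j\sum_{|\ell-j|\le N}(1-r_\ell)^{1+q/p}S_\ell^{q/p}.
\]
Interchanging the order of summation and noting that each $\ell$ lies in the band $\{|\ell-j|\le N\}$ for at most $2N+1$ values of $j$, the right-hand side is bounded by $C'\sum_\ell(1-r_\ell)^{1+q/p}S_\ell^{q/p}=C'\norm{(a_m)}_{l^{p,q}(\Gamma)}^q$, which is the desired estimate.

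The only step that is not pure bookkeeping is the first one — controlling how far the annulus index of $z_m$ can move when $z_m$ is replaced by $z'_m$ — but this is immediate from the metric comparison and the explicit geometry of the annuli $A_j$. Everything after that is routine, the only care needed being the split into the cases $q\le p$ and $q>p$ in the power-of-a-sum step and the bounded overlap of the index bands.
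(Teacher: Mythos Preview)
Your proof is correct and follows essentially the same route as the paper: show that a small pseudohyperbolic perturbation can move the annulus index of $z_m$ by at most a bounded amount, compare the weights $(1-r_{j'})^{1+q/p}\asymp(1-r_j)^{1+q/p}$ across nearby annuli, and apply the elementary power-of-a-sum inequality together with bounded overlap. The only difference is cosmetic: the paper chooses $\delta<1/20$ to force the concrete value $N=1$ (so $z'_m\in A_{j-1}\cup A_j\cup A_{j+1}$) and writes down the explicit constant $3\beta^{-(1+q/p)}C_{q/p}$, whereas you carry a generic $N$ throughout.
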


\begin{proof}
First, for $a,b,c>0$ and $\alpha > 0$, a simple application of Holder's
and Minkowski's inequalities gives us
\begin{equation}\label{ine:lem:sta}
  (a+b+c)^\alpha \leq C_\alpha (a^\alpha + b^\alpha + c^\alpha)
\end{equation}
where $C_\alpha = \max\{ 1,3^{\alpha-1} \}$.

Second, since $z'_m \in E(z_m,\delta)$, $|z_m - z'_m| < 2R$ where
\begin{equation*}
    R = \dfrac{\delta (1-|z_m|^2)}{1-\delta^2|z_m|^2}
\end{equation*}
is the Euclidean radius of the hyperbolic disk $E(z_m,\delta)$. This gives us
\begin{equation}
  |z_m - z'_m| < \dfrac{2\delta (1-|z_m|^2)}{1-\delta^2|z_m|^2} <
    \dfrac{4\delta}{1-\delta^2}(1-|z_m|).
\end{equation}
Thus for $\delta < 1/20$:
\begin{equation*}
  |z_m - z'_m| < \dfrac{4\delta}{1-\delta^2}(1-|z_m|) < \dfrac{1}{4}(1-|z_m|).
\end{equation*}
Now let $(z_{j,k})$ be the doubly indexed sequence of $(z_m)$ defined as
in \eqref{eqn:doubleindex}. Since $r_j < |z_{j,k}| < r_{j+1}$ and $1-r_j
= \beta^j$, we have
\begin{equation*}
  |z_m - z'_m| < \dfrac{1}{4}(1-r_j) < \dfrac{\beta(r_j-r_{j-1})}{4(1-\beta)}.
\end{equation*}
It follows that $z'_m \in A_{j-1} \cup A_j \cup A_{j+1}$. This and
inequality \eqref{ine:lem:sta} imply
\begin{equation}
  \norm{ (a_m) }^q_{l^{p,q}(\Gamma')}
    \leq 3\beta^{-(1+q/p)}C_{q/p}\norm{(a_m)}^q_{l^{p,q}(\Gamma)}
    < \infty.
\end{equation}
Hence, $(a_m)$ is also a sequence in $l^{p,q}(\Gamma')$.
\end{proof}

The following theorem shows that if $\Gamma$ is an interpolation
sequence for $A(p,q)$ then a small perturbation on $\Gamma$ still results
in an interpolation sequence for $A(p,q)$. The proof is taken after
Lemma 1.9 in \cite{jevtic} and Theorem 5.1 in \cite{luecking1}, save a
few minor changes to work for mixed-norm spaces.

\begin{theorem}\label{thm:stability}
For $0<p,q<\infty$, let $\Gamma = (u_m)$ be an interpolation sequence
for $A(p,q)$ and $(u'_m)$ be another sequence in $\ID $. There
exists $\delta > 0$ such that if
\begin{equation*}
  \rho(u_m,u'_m) < \delta \quad \text{for all } m
\end{equation*}
then $\Gamma' = (u'_m)$ is also an interpolation sequence for $A(p,q)$.
\end{theorem}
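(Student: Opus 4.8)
The plan is to exploit the fact that $\Gamma=(u_m)$ is an interpolation sequence together with the stability/perturbation machinery: because $R_\Gamma\colon A(p,q)\to l^{p,q}(\Gamma)$ is bounded, onto, and has a bounded right inverse with constant $M(\Gamma)$, a sufficiently small perturbation will change $R_\Gamma$ by a small-norm operator, and a Neumann-series argument will show the perturbed map is still onto. First I would set up the right framework: by Corollary~\ref{cor:necessityofseparation}, $\Gamma$ is uniformly discrete with some separation constant $\delta_0$; shrinking $\delta$ below $\delta_0/3$ (and below $1/20$ as in Lemma~\ref{lem:sta}) guarantees $\Gamma'$ is also uniformly discrete, so by Theorem~\ref{thm:discretebounded} the evaluation operator $R_{\Gamma'}\colon A(p,q)\to l^{p,q}(\Gamma')$ is bounded, and by Lemma~\ref{lem:sta} the two sequence spaces $l^{p,q}(\Gamma)$ and $l^{p,q}(\Gamma')$ coincide with equivalent norms (the perturbation moves each $u_m$ to an adjacent annulus at worst). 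So it suffices to prove $R_{\Gamma'}(A(p,q))=l^{p,q}(\Gamma)$ as sets.

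The core estimate is to control $R_\Gamma f - R_{\Gamma'} f = (f(u_m)-f(u'_m))_m$ in $l^{p,q}$ by a small multiple of $\norm{f}_{A(p,q)}$. For a single term, since $u_m,u'_m$ lie in a common hyperbolic disk of radius $\delta$, I would write $f(u_m)-f(u'_m)$ as an integral of $f'$ along the hyperbolic geodesic (or use the mean-value form of the Schwarz–Pick estimate), bounding $|f(u_m)-f(u'_m)|\le C\delta\,(1-|u_m|)\sup_{E(u_m,2\delta)}|f'|$, and then convert the derivative bound back to an average of $|f|^p$ over a slightly larger pseudohyperbolic disk via the subharmonicity/Cauchy-estimate argument already used in Section~\ref{sec:discrete}. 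Summing these local estimates against the weights $(1-r_j)^{1+q/p}$ and using inequality~\eqref{ine:lem:sta} together with the discrete norm~\eqref{eqn:discretenorm} gives
\begin{equation*}
  \norm{R_\Gamma f - R_{\Gamma'} f}_{l^{p,q}} \le C\,\delta^{s}\,\norm{f}_{A(p,q)},
\end{equation*}
for a constant $C=C(\beta,p,q)$ independent of $f$ and of the perturbation, where $s=\min(p,q,1)$ accounts for the quasi-norm when $p$ or $q<1$.

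With that estimate in hand I would finish as follows. Given a target sequence $(a_m)\in l^{p,q}(\Gamma)$, solve the interpolation problem for $\Gamma$: pick $f_0\in A(p,q)$ with $R_\Gamma f_0=(a_m)$ and $\norm{f_0}\le M(\Gamma)\norm{(a_m)}$. Then $R_{\Gamma'}f_0=(a_m)-(R_\Gamma f_0-R_{\Gamma'}f_0)$, where the error has $l^{p,q}$-norm at most $C\delta^{s}M(\Gamma)\norm{(a_m)}$. Iterating — correct the error by another $\Gamma$-interpolant, and so on — produces a series $f=\sum_{n\ge0} f_n$ whose $A(p,q)$-norm converges provided $C\delta^{s}M(\Gamma)<1$, i.e. provided $\delta$ is small enough (here one uses completeness of $A(p,q)$ from Proposition~\ref{prop:basicproperties}, and that the quasi-norm satisfies $d(\sum f_n)\le\sum d(f_n)$ after the $s$-power). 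By construction $R_{\Gamma'}f=(a_m)$. This shows $l^{p,q}(\Gamma)\subset R_{\Gamma'}(A(p,q))$; the reverse inclusion is Theorem~\ref{thm:discretebounded} together with Lemma~\ref{lem:sta}. Hence $\Gamma'$ is an interpolation sequence for $A(p,q)$.

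The main obstacle I anticipate is the per-term difference estimate $|f(u_m)-f(u'_m)|$ in the quasi-Banach range $0<p,q<1$: the clean "integrate $f'$" argument is fine, but converting $\sup|f'|$ on a pseudohyperbolic disk into an $L^p$-average of $|f|$ and then assembling the pieces requires care with the constant $C_{q/p}$ in~\eqref{ine:lem:sta} and with the fact that the three annuli $A_{j-1},A_j,A_{j+1}$ each receive contributions — essentially the bookkeeping already appearing in the proof of Lemma~\ref{lem:sta}, but now applied to $f$ rather than to the abstract sequence. Getting the constant genuinely independent of $m$ (uniform over the disc) is where the absence of conformal invariance bites, exactly as flagged in the abstract; one compensates by using the $\rho$-invariant Schwarz–Pick bound for $|f'|$ rather than a naive Cauchy estimate on Euclidean disks.
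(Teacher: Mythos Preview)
Your approach is essentially the same as the paper's: the same iterative correction (Neumann-series) scheme, the same reduction to a local bound on $|f(u_m)-f(u'_m)|$ by an $L^p$-average of $|f|$ over a pseudohyperbolic disk, and the same summation via the discrete norm \eqref{eqn:discretenorm}. The only difference is that the paper invokes the ready-made inequality
\[
  |f(z)-f(w)|^p \le C\,\rho(z,w)^p \int_{E(w,r)} |f(\zeta)|^p\,(1-|\zeta|^2)^{-2}\,dA(\zeta)
\]
(cited from \cite{jevtic}, \cite{luecking1}) instead of routing through $\sup|f'|$, which yields the contraction factor linearly in $\delta$ (not $\delta^s$) and sidesteps the Schwarz--Pick bookkeeping you flag as the main obstacle.
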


\begin{proof}
Let $(v_m) \in l^{p,q}(\Gamma')$. By Lemma \ref{lem:sta}, $(v_m) \in
l^{p,q}(\Gamma)$ for $\delta$ small enough. Denote $u=(u_m), u'=(u'_m)$
and $v^0=v$. Since $u$ is an interpolation sequence of $A(p,q)$, there exists
$f_0 \in A(p,q)$ such that $f_0(u)=v^0$ (i.e., $f_0(u_m)=v_m$ for all $m$).
Suppose $v^1 \in l^{p,q}(\Gamma)$, take now $v^1 := v^0-f_0(u'), f_1 \in
A(p,q)$ with $f_1(u)=v^1$, and define $v^2=v^1-f_1(u')$. An iteration of
this construction provides functions $f_m \in A(p,q)$ with
$f_0(u')+f_1(u')+\dots+f_{m-1}(u')+f_m(u) = v^0 = v$. If we can prove
that there exists $0<\gamma<1$ such that $\norm{ v_{m+1} } \leq
\gamma\norm{ v_m }$ for $m=0,1,2,\dots$ then
\begin{equation*}
  \norm{f_m} \leq M\norm{v^m} \leq M\gamma^m\norm{v^0},
\end{equation*}
where $M$ is the interpolation constant of $(u_m)$. The interpolation
problem for $(u'_m)$ is then solved by the function $f=\sum_m f_m$. To
this end, we use a general estimate for analytic function which can be
found in \cite{jevtic} or \cite{luecking1}:
\begin{equation}\label{ine:thm:sta}
  |f(z)-f(w)|^p \leq
    C\rho^p(z,w) \int\limits_{E(w,r)} |f(\zeta)|^p (1-|\zeta|^2)^{-2}
    \,dA(z)
\end{equation}
where $C=C(p,r)>0$ and $r\geq 2\rho(z,w)$. Provided that $\delta$ is chosen
small enough so that the hyperbolic disks $E(z_m,r)$ are pairwise
disjoint, we have:
\begin{align*}
  \norm{v^1}^q_{l^{p,q}(\Gamma)}
    &= \sum_{j} (1-r_j)^{1+q/p} \left( \sum_k
        |f_0(u_{j,k})-f_0(u'_{j,k})|^p \right)^{q/p}  \\
    &\leq \sum_{j} (1-r_j)^{1+q/p}
        \left( \sum_k C_1\delta^p
        \int\limits_{E(u_{j,k},r)} |f_0(\zeta)|^p (1-|\zeta|^2)^{-2}
        \,dA(\zeta) \right)^{q/p} \\
    &\leq C_1^{q/p}\delta^q \sum_j (1-r_j)^{1+q/p}
        \left( \sum_k
        \int\limits_{E(u_{j,k},r)} |f_0(\zeta)|^p (1-|\zeta|^2)^{-2}
        \,dA(\zeta) \right)^{q/p} \\
    &\leq C_2\delta^q \sum_j (1-r_j)^{1+q/p}
        \left( \sum_k \int\limits_{E(u_{j,k},r)}
        \dfrac{1}{|E(u_{j,k},r)|} |f_0(\zeta)|^p \,dA(\zeta)
        \right)^{q/p}
\end{align*}
The same argument as in the proof of Theorem \ref{thm:discretebounded} shows that
\begin{equation*}
  \int\limits_{E(u_{j,k},r)} \dfrac{1}{|E(u_{j,k},r)|} |f_0(\zeta)|^p
    \,dA(\zeta)
  \leq C_3(r,\beta) \sum_{Q_{j',k'} \cap E_{j,k} \neq \emptyset}
    \dfrac{1}{|Q_{j',k'}|} \int_{Q_{j\,',k'}} |f|^p \,dA
\end{equation*}
where the number of $Q_{j\prime,k'} \cap E_{j,k} \neq \emptyset$ is at
most $L+4$. Hence,
\begin{align*}
  \norm{v^1}^q
    &\leq C_4\delta^q \sum_j (1-r_j)^{1+q/p}
        \left( \sum_k \dfrac{1}{|Q_{j,k}|} \int_{Q_{j,k}} |f|^p
        \,dA \right)^{q/p} \\
    &\leq C_5\delta^q \norm{f_0}^q \\
    &\leq C_5\delta^p M^q \norm{v^0}^q,
\end{align*}
where $C_5$ depends only on $p,q,\delta$ and $\beta$. Let $\gamma^q =
C_5\delta^q M^q$ and choose $\delta$ small enough so that $\gamma < 1$.
The same estimate shows that $\norm{ v_{m+1} } \leq \gamma\lVert
v_m \rVert$ for $m=0,1,2,\dots$. This completes the proof.
\end{proof}

We see that interpolation sequences are stable under small perturbations.
On the other hand, a small perturbation can increase the upper uniform
density of a sequence. The following lemma was proved in
\cite{domanski}:

\begin{lemma}\label{lem:domanski}
Let $\Gamma=(z_n)$ be a uniformly discrete sequence in $\ID $ with
$\rho(z_m,z_n)>\beta$ for $m\neq n$ and $0<\delta<\beta<1/2$. If
$D^+(\Gamma)<\infty$, then there is a sequence $\Gamma' = (z'_n) \subset
\ID $ with $\rho(z_n,z'_n) \leq \delta$ for all $n$, such that
\begin{equation*}
  D^+(\Gamma') \geq (1+\delta)D^+(\Gamma)
\end{equation*}
\end{lemma}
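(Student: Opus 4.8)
Assume $D^+(\Gamma)>0$, since otherwise the inequality is trivial, and recall the reformulation
\[
  F(\Gamma,\zeta,r)=\frac{1}{\log\frac{1}{1-r}}\sum_{1/2<\rho(\zeta,z_n)<r}\log\frac{1}{\rho(\zeta,z_n)},
  \qquad
  D^+(\Gamma)=\limsup_{r\to1}\ \sup_{\zeta\in\ID}F(\Gamma,\zeta,r).
\]
The plan is to choose, along a very rapidly increasing sequence $r_k\uparrow1$, centers $\zeta_k$ nearly realizing $D^+(\Gamma)$ at radius $r_k$, and then to build $\Gamma'$ by sliding, for each $k$, every point of $\Gamma$ lying in a thin pseudohyperbolic shell $\{r_{k-1}<\rho(\zeta_k,\cdot)<r_k\}$ a pseudohyperbolic distance $\delta$ \emph{toward} $\zeta_k$ (points in no such shell are left fixed).

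The engine is a one–variable estimate. If $\rho(\zeta,z)\ge\delta$ and $z^\ast$ is the point on the hyperbolic geodesic from $z$ to $\zeta$ with $\rho(z,z^\ast)=\delta$, then conjugating by $M_\zeta$ (which sends $\zeta$ to $0$ and the geodesic to a radius) gives $\rho(\zeta,z^\ast)=\dfrac{\rho(\zeta,z)-\delta}{1-\delta\,\rho(\zeta,z)}$, and a short expansion near $\rho=1$ yields $\dfrac{\log\bigl((1-\delta\rho)/(\rho-\delta)\bigr)}{\log(1/\rho)}\to\dfrac{1+\delta}{1-\delta}>1+\delta$ as $\rho\to1^-$. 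So there is $\rho_\delta\in(0,1)$, independent of $\zeta$, with $\log\frac{1}{\rho(\zeta,z^\ast)}\ge(1+\delta)\log\frac{1}{\rho(\zeta,z)}$ once $\rho(\zeta,z)\ge\rho_\delta$. The same computation applied to an arbitrary displacement of size $\le\delta$ gives a crude comparison $\log\frac{1}{\rho(\zeta,z^{\ast})}\ge c_\delta\log\frac{1}{\rho(\zeta,z)}$ for $\rho(\zeta,z)$ near $1$ (some $c_\delta>0$), which we keep in reserve.

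Now choose $r_k\uparrow1$ and $\zeta_k$ with $F(\Gamma,\zeta_k,r_k)>D^+(\Gamma)-1/k$ (possible since $D^+(\Gamma)<\infty$). By Proposition~\ref{prop:uniformlydiscrete}(2) the number of $z_n$ in any fixed pseudohyperbolic disk $E(\zeta_k,\sigma)$, $\sigma<1$, is bounded independently of $k$, so the part of the $k$-th sum coming from $\{\rho(\zeta_k,z_n)\le r_{k-1}\}$ is $O\bigl(1/(1-r_{k-1})\bigr)$. Imposing on the $r_k$ the growth $1-r_k<\exp\bigl(-1/(1-r_{k-1})\bigr)$ together with $r_{k-1}>\rho_\delta$, we may therefore restrict the $k$-th sum to the shell $\Gamma_k:=\{z_n\in\Gamma:\ r_{k-1}<\rho(\zeta_k,z_n)<r_k\}$, losing only $o\bigl(\log\frac{1}{1-r_k}\bigr)$, so that $\sum_{z_n\in\Gamma_k}\log\frac{1}{\rho(\zeta_k,z_n)}\ge(D^+(\Gamma)-\eta_k)\log\frac{1}{1-r_k}$ with $\eta_k\to0$. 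Build $\Gamma'=(z'_n)$ by running $k=1,2,\dots$ in order: at stage $k$ replace each $z_n\in\Gamma_k$ not already moved by its shift toward $\zeta_k$. Each point moves at most once and by pseudohyperbolic distance $\le\delta$, so $\rho(z_n,z'_n)\le\delta$. To bound $F(\Gamma',\zeta_k,r_k)$ from below for large $k$: a point of $\Gamma_k$ not previously moved is pushed toward $\zeta_k$, stays in $E(\zeta_k,r_k)$, and (since $\rho(\zeta_k,z_n)>r_{k-1}>\rho_\delta$) contributes at least $(1+\delta)\log\frac{1}{\rho(\zeta_k,z_n)}$; a point of $\Gamma_k$ \emph{previously} moved lay in $\bigcup_{j<k}\bigl(\Gamma\cap E(\zeta_j,r_j)\bigr)$, a set of cardinality $O\bigl(1/(1-r_{k-1})\bigr)=o\bigl(\log\frac{1}{1-r_k}\bigr)$ by Proposition~\ref{prop:uniformlydiscrete}, so these cost only $o\bigl(\log\frac{1}{1-r_k}\bigr)$; and points outside $\Gamma_k$ either sit in $E(\zeta_k,r_{k-1})$ and, being moved only $\le\delta$, cannot escape $E(\zeta_k,r_k)$ by the rapid growth of $r_k$, or else merely add nonnegative terms. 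Hence $F(\Gamma',\zeta_k,r_k)\ge(1+\delta)(D^+(\Gamma)-\eta_k)-o(1)$, and letting $k\to\infty$ gives $D^+(\Gamma')\ge\limsup_k F(\Gamma',\zeta_k,r_k)\ge(1+\delta)D^+(\Gamma)$.

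The only genuinely delicate part is the bookkeeping in the last step: one must check that no point is moved twice, that the previously processed points of $\Gamma_k$ and the points drifting across $\partial E(\zeta_k,r_k)$ together contribute only $o\bigl(\log\frac{1}{1-r_k}\bigr)$, and that restricting to the shells $\Gamma_k$ costs only $o\bigl(\log\frac{1}{1-r_k}\bigr)$. All three are supplied by the same two facts: the extreme growth imposed on the $r_k$ (which simultaneously annihilates the inner tail $\{\rho(\zeta_k,\cdot)\le r_{k-1}\}$ and traps moved points inside $E(\zeta_k,r_k)$) and the uniform–discreteness count $n(\Gamma,z,r)=O(1/(1-r))$ of Proposition~\ref{prop:uniformlydiscrete}. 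Once these are in hand the lemma reduces to the elementary inequality of the second paragraph.
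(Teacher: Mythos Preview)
The paper does not give its own proof of this lemma: it is quoted verbatim from Doma\'nski--Lindstr\"om \cite{domanski} and used as a black box (see the sentence ``The following lemma was proved in \cite{domanski}'' immediately preceding the statement). So there is nothing to compare your argument against in this paper.

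That said, your sketch is essentially the standard construction and is sound. The key analytic point---that pushing $z$ a pseudohyperbolic distance $\delta$ toward $\zeta$ sends $\rho=\rho(\zeta,z)$ to $(\rho-\delta)/(1-\delta\rho)$ and hence multiplies $\log(1/\rho)$ by a factor tending to $(1+\delta)/(1-\delta)>1+\delta$ as $\rho\to1$---is correct and is exactly what drives the result. The rest is bookkeeping, and your outline handles it correctly.

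One small point deserves tightening. You assert that the set of points already moved at stage $k$, namely $\bigcup_{j<k}\Gamma_j$, has cardinality $O\bigl(1/(1-r_{k-1})\bigr)$. A priori this set has cardinality at most $\sum_{j<k} C/(1-r_j)$, and to make this $O\bigl(1/(1-r_{k-1})\bigr)$ you need the $r_j$ to grow fast enough that the sum is dominated by its last term (e.g.\ $1/(1-r_j)\ge 2\cdot 1/(1-r_{j-1})$). This is compatible with, but not literally implied by, the single growth condition $1-r_k<\exp\bigl(-1/(1-r_{k-1})\bigr)$ you wrote down; just impose both. Alternatively, note that each previously-moved point in $\Gamma_k$ contributes at most $\log(1/r_{k-1})=O(1-r_{k-1})$ to the $k$-th sum, so even a cardinality bound of $O\bigl(k/(1-r_{k-1})\bigr)$ yields a negligible loss provided you strengthen the growth to, say, $\log\frac{1}{1-r_k}\ge k/(1-r_{k-1})$. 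Either fix is routine.
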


\begin{remark}\label{rem:stability}
Suppose that $D^+(\Gamma)\leq \gamma$ for every interpolation sequence
$\Gamma$ for $A(p,q)$. Then by Theorem~\ref{thm:stability} and Lemma
\ref{lem:domanski}, there exists an interpolation sequence $\Gamma'$ for
$A(p,q)$ such that $D^+(\Gamma')>D^+(\Gamma)$. Since
$D^+(\Gamma')\leq\gamma$, we must have $D^+(\Gamma)<\gamma$. Thus in
order to prove that $D^+(\Gamma)<\gamma$ for every interpolation
sequence $\Gamma$ for $A(p,q)$, we only need to show that
$D^+(\Gamma)\leq\gamma$ for all $\Gamma$.
\end{remark}

\section{Interpolation sequences for mixed-norm spaces}\label{sec:main}

We propose to prove
\begin{theorem} \label{interpolationthm}
A sequence $\Gamma$ of distinct points in $\ID $ is a set of
interpolation for $A(p,q)$ if and only if $\Gamma$ is uniformly discrete
and $D^+(\Gamma) < 1/q$.
\end{theorem}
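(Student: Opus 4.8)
The plan is to prove the two directions separately, following the template that worked for the weighted Bergman spaces (Theorems~\ref{weightedbergmansampling}--\ref{weightedbergmaninterpolation}) but replacing every conformally‑invariant argument with the discrete machinery of Section~\ref{sec:discrete}. Note first that $A(p,q)$ interpolates for the pair $(A(p,q),l^{p,q})$, and that the relevant "weight exponent" is $1/q$: comparing the definition of $l^{p,q}(\Gamma)$ with $l^p_\alpha(\Gamma)$, the factor $(1-r_j)^{1+q/p}$ plays the role of $(1-|z_m|^2)^{\alpha+2}$ raised to the $q/p$ power, so the critical density is $(1+\alpha)/p$ with $\alpha+2 \leftrightarrow 1+p/q$, i.e.\ the threshold is $1/q$. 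This is the number that must appear.

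**Necessity ($\Gamma$ interpolating $\Rightarrow$ uniformly discrete and $D^+(\Gamma)<1/q$).** Uniform discreteness is already Corollary~\ref{cor:necessityofseparation}. For the density bound, by Remark~\ref{rem:stability} it suffices to show $D^+(\Gamma)\le 1/q$ for every interpolation sequence $\Gamma$. The standard route is to test the interpolation operator against explicitly chosen target sequences supported on the points of $\Gamma$ lying in a large pseudohyperbolic disk $E(\zeta,r)$, and to compare the $l^{p,q}$‑norm of the solution (controlled by the interpolation constant $M$) with the lower bound forced by analyticity — essentially a Jensen/sub‑mean‑value estimate saying an analytic function which is large at $n(\Gamma,\zeta,r)$ separated points in $E(\zeta,r)$ must have large $\triplenorm{\cdot}$. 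Carrying this through with the equivalent norm \eqref{eqn:discretenorm} converts the inequality into a statement about $\sum\log(1/\rho(\zeta,z_n))$ versus $q^{-1}\log(1/(1-r))$, which is exactly $D^+(\Gamma)\le 1/q$. I expect this half to be somewhat technical but routine once the test functions are chosen correctly.

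**Sufficiency ($\Gamma$ uniformly discrete with $D^+(\Gamma)<1/q$ $\Rightarrow$ interpolating).** Theorem~\ref{thm:discretebounded} already gives $R_\Gamma(A(p,q))\subset l^{p,q}$, so only surjectivity remains. The plan is to build a bounded right inverse by an atomic‑decomposition / Toeplitz‑operator scheme: given $(a_{j,k})\in l^{p,q}$, write a candidate solution as a sum $f(z)=\sum_{j,k} a_{j,k}\,(1-|z_{j,k}|^2)^{N}\,\varphi_{j,k}(z)$ of reproducing‑kernel‑type atoms $\varphi_{j,k}(z)=(1-\bar z_{j,k}z)^{-N-(1/p+1/q)}$ (or similar), choosing the exponent $N$ large. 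One shows (i) this series converges in $A(p,q)$ with $\norm{f}\le C\norm{(a_{j,k})}_{l^{p,q}}$ — a Schur‑test / Forelli–Rudin estimate on the mixed‑norm integral — and (ii) the resulting "sampling–then–synthesis" operator $T\colon l^{p,q}\to l^{p,q}$, $T(a)=(f(z_{j,k}))$, is invertible. For (ii) the key point is that $T$ is close to a diagonal operator: the off‑diagonal terms are controlled by $\sum_{(j',k')\ne(j,k)}\rho(z_{j,k},z_{j',k'})^{\text{(large power)}}$, which is small precisely because $D^+(\Gamma)<1/q$ forces the points to be sparse enough (via Proposition~\ref{prop:uniformlydiscrete} and the density hypothesis) that this sum is uniformly small. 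Then $T=\,$diagonal$\,+\,$small, hence invertible, and $f\circ(\text{diag})^{-1}\circ T^{-1}$ solves the interpolation problem.

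**The main obstacle** is step (ii) of sufficiency: making precise the claim that "$D^+(\Gamma)<1/q$ $\Rightarrow$ the off‑diagonal mass of $T$ is small." In the Bergman case one can conformally move each $z_{j,k}$ to the origin and the estimate becomes a clean geometric series; here no such conformal reduction is available, so the decay of $\varphi_{j',k'}(z_{j,k})$ must be estimated directly in terms of $\rho$ and the annular combinatorics of the $Q_{j,k}$, and the density condition must be fed in through a covering/counting argument à la Seip rather than through invariance. A secondary difficulty is the convergence estimate (i) for the synthesis operator: the mixed norm is not an honest $L^p$ norm, so the usual Forelli–Rudin integral estimate has to be replaced by an iterated one‑variable estimate (first in $\theta$ using $M_p$, then in $r$), which is exactly the kind of calculation the authors warn "different techniques of proof were required" for.
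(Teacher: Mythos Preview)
Your proposal differs from the paper's proof in both directions, and the sufficiency half contains a real gap.

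\textbf{Necessity.} The paper does not run a direct Jensen/test-function argument. Instead it bootstraps: using the interpolation constant for $A(p,q)$ it builds functions $g_j\in A(p,q)$ with $g_j(z_j)=(1-|z_j|^2)^{-(1/p+1/q)}$, $g_j(z_{j'})=0$ for $j'\ne j$, and then shows that the formula $f(z)=\sum_j a_j g_j(z)(1-|z_j|^2)^{n+s}(1-\bar z_j z)^{-s}$ solves the interpolation problem in $A^q$ (when $q<p$) or in $A^p_\alpha$ for every $\alpha$ with $(1+\alpha)/p>1/q$ (when $p<q$). Theorems~\ref{weightedbergmansampling}--\ref{weightedbergmaninterpolation} then give $D^+(\Gamma)\le 1/q$, and Remark~\ref{rem:stability} upgrades this to strict inequality. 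Your direct approach could perhaps be made to work, but it re-proves Seip's density argument inside the mixed-norm setting, whereas the paper simply transfers the problem to a space where that argument is already done.

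\textbf{Sufficiency.} Here the paper again bootstraps, but in the opposite direction: since $D^+(\Gamma)<1/q$, Theorem~\ref{seiptheoreminterpolation} says $\Gamma$ interpolates for $A^{-n}$ with $n=1/q-\eps$, which supplies functions $g_{mk}\in A^{-n}$ that \emph{already vanish} at all other points of $\Gamma$. The candidate $f(z)=\sum a_{mk}g_{mk}(z)(1-|z_{mk}|^2)^{n+s}(1-\bar z_{mk}z)^{-s}$ therefore satisfies $f(z_{mk})=a_{mk}$ exactly, and the only work is the norm estimate $\|f\|_{A(p,q)}<\infty$, handled by Lemma~\ref{lem:inequalities} in four cases. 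Your scheme uses pure reproducing-kernel atoms $\varphi_{j,k}$ that do \emph{not} vanish at the other points, so you must invert the Gram-type operator $T$ on $l^{p,q}$. The claimed mechanism --- that $D^+(\Gamma)<1/q$ forces the off-diagonal part of $T$ to be small --- is where the gap lies. The off-diagonal entries are essentially $(1-|z_{j',k'}|^2)^{N}/|1-\bar z_{j',k'}z_{j,k}|^{N+1/p+1/q}$; for a uniformly discrete sequence their sum is \emph{bounded} (this is inequality~\eqref{ineq05}), but boundedness is not smallness, and the upper density condition is a logarithmic averaged count that does not translate into decay of such kernel sums. The ``diagonal plus small'' invertibility argument works when $\Gamma$ is a slight perturbation of a fixed lattice, not under a bare density hypothesis. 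The way $D^+(\Gamma)<1/q$ actually enters is precisely through Seip's $A^{-n}$ theorem, which manufactures the vanishing functions $g_{mk}$; bypassing that theorem leaves you without a usable link between the density condition and the operator estimate you need.
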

The proof is split into several cases and requires the following inequalities

\begin{lemma}\label{lem:inequalities} {\ }
\begin{enumerate}
  \item For $0<p\le 1$,
    \begin{equation}\label{ineq01}
    \left(
        \sum_{n=1}^\infty |b_n|
        \right)^p
        \le \sum_{n=1}^\infty |b_n|^p
    \end{equation}

  \item For $M>1$, there exists a constant $C=C(M)$ such that for all $0<\rho<1$
    \begin{equation}\label{ineq02}
      \frac{1/C} {(1-\rho)^{M-1}}
        \le
        \int_{-\pi}^{\pi}
        \frac{1} {|1-\rho e^{i\theta}|^M}
        \,d\theta
        \le
        \frac{C} {(1-\rho)^{M-1}}
    \end{equation}

  \item For $-1<a<B-1$, there exists a constant $C=C(a,B)$ such that for all $0<\rho<1$
    \begin{equation}\label{ineq03}
        \frac{1/C} {(1-\rho)^{B-a-1}}
        \le
        \int_{0}^{1}
        \frac{(1-r)^a} {(1-r\rho)^B}
        \,dr
        \le
        \frac{C}
        {(1-\rho)^{B-a-1}}
    \end{equation}

  \item For $-1<a<M-2$, there exists a constant $C=C(a,M)$ such that for
        all $w\in\ID $
    \begin{equation}\label{ineq04}
        \frac{1/C} {(1-|w|)^{M-a-2}}
        \le \int_{\ID }
        \frac{(1-|z|^2)^a} {|1-\bar{w}z|^M} \,dA(z)
        \le \frac{C} {(1-|w|)^{M-a-2}}
    \end{equation}

  \item Let $\Gamma = (z_k)$ be a uniformly discrete sequence in $\ID $
    with seperation constant $\delta = \delta(\Gamma)$. Then for $1<t<s$,
    there is a constant $C = C(t,s,\delta) > 0$ such that
    \begin{equation}\label{ineq05}
        \sum_{k=1}^\infty
        \frac{(1-|z_k|^2)^t} {|1-\bar{z}z_k|^s}
        \le C(1-|z|^2)^{t-s}
    \end{equation}
        for all $z\in\ID $.
\end{enumerate}
\end{lemma}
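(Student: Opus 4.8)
These five inequalities form a chain in which each is proved from the earlier ones, so I would establish them in the order stated. Inequality~\eqref{ineq01} is just the subadditivity of $t\mapsto t^p$ on $[0,\infty)$ when $0<p\le1$: from $(a+b)^p\le a^p+b^p$ for $a,b\ge0$ (divide through by $(a+b)^p$) one gets the finite-sum version by induction, and monotone convergence passes to the infinite sum. For~\eqref{ineq02} I would write $|1-\rho e^{i\theta}|^2=(1-\rho)^2+2\rho(1-\cos\theta)$ and use that $1-\cos\theta$ is comparable to $\theta^2$ on $[-\pi,\pi]$; after disposing of the range $\rho\le1/2$ (where both sides lie between constants depending only on $M$), the integral is comparable to $\int_0^\pi((1-\rho)^2+\theta^2)^{-M/2}\,d\theta$, and the substitution $\theta=(1-\rho)u$ factors out $(1-\rho)^{1-M}$ and leaves $\int_0^{\pi/(1-\rho)}(1+u^2)^{-M/2}\,du$, which lies between two positive constants precisely because $M>1$ makes $\int_0^\infty(1+u^2)^{-M/2}\,du$ converge.

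Inequality~\eqref{ineq03} is the one-dimensional analogue. Writing $1-r\rho=(1-\rho)+\rho(1-r)$ and again treating $\rho\le1/2$ separately, the integral is (after the substitution $t=1-r$) comparable to $\int_0^1 t^a((1-\rho)+t)^{-B}\,dt$, which I would split at $t=1-\rho$: on $0<t<1-\rho$ the denominator is comparable to $(1-\rho)^B$ and $\int_0^{1-\rho}t^a\,dt$ is comparable to $(1-\rho)^{a+1}$ (this uses $a>-1$); on $1-\rho<t<1$ the denominator is comparable to $t^B$ and $\int_{1-\rho}^{1}t^{a-B}\,dt$ is comparable to $(1-\rho)^{a-B+1}$ (this uses $a-B<-1$, i.e.\ $a<B-1$). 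Both pieces yield $(1-\rho)^{a-B+1}=(1-\rho)^{-(B-a-1)}$, and the same splitting gives the lower bound.

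Inequality~\eqref{ineq04} follows by integrating in polar coordinates. By rotation invariance we may take $w=|w|$; the inner integral $\int_{-\pi}^{\pi}|1-|w|re^{i\theta}|^{-M}\,d\theta$ is handled by~\eqref{ineq02} with $\rho=|w|r$, giving something comparable to $(1-|w|r)^{-(M-1)}$. Then $\int_0^1(1-r^2)^a(1-|w|r)^{-(M-1)}\,r\,dr$ is, up to the factor $r$ and the replacement of $1-r^2$ by $1-r$ (harmless since the integrand is bounded above and below by constants near $r=0$), the integral of~\eqref{ineq03} with $B=M-1$; the hypothesis $-1<a<M-2$ is exactly $-1<a<B-1$, so~\eqref{ineq03} applies and gives $(1-|w|)^{-((M-1)-a-1)}=(1-|w|)^{-(M-a-2)}$.

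Finally, for~\eqref{ineq05} I would use disjoint pseudohyperbolic disks. Since $\Gamma$ is $\delta$-separated, the disks $E(z_k,\delta/2)$ are pairwise disjoint; moreover $|E(z_k,\delta/2)|$ is comparable to $(1-|z_k|^2)^2$, and for $\zeta\in E(z_k,\delta/2)$ one has $1-|\zeta|^2$ comparable to $1-|z_k|^2$ and $|1-\bar z\zeta|$ comparable to $|1-\bar z z_k|$, with constants depending only on $\delta$ (the last being the standard M\"obius-invariance estimate: with $\zeta=M_{z_k}(u)$, $|u|<\delta/2$, one has $1-\bar z M_{z_k}(u)=(1-\bar z z_k)(1+u(\bar z-\bar z_k)/(1-\bar z z_k))/(1-\bar z_k u)$, whose middle factor has modulus between $1-\delta/2$ and $1+\delta/2$ and whose denominator has modulus between $1-\delta/2$ and $1+\delta/2$). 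Hence
\[
  \frac{(1-|z_k|^2)^t}{|1-\bar z z_k|^s}\le C\int_{E(z_k,\delta/2)}\frac{(1-|\zeta|^2)^{t-2}}{|1-\bar z\zeta|^s}\,dA(\zeta),
\]
and summing over $k$ and invoking disjointness bounds the left side of~\eqref{ineq05} by $C\int_{\ID}(1-|\zeta|^2)^{t-2}|1-\bar z\zeta|^{-s}\,dA(\zeta)$; now~\eqref{ineq04} with $a=t-2$, $M=s$ applies since $1<t<s$ is exactly $-1<t-2<s-2$, giving the asserted bound $C(1-|z|^2)^{t-s}$. I do not anticipate any real difficulty: the only things needing attention are keeping the exponent restrictions aligned along the chain (each of $a>-1$, $a<B-1$, and $1<t<s$ is used at exactly one place) and the routine check in~\eqref{ineq04} that the factor $r$ and the discrepancy between $1-r$ and $1-r^2$ near $r=0$ do not affect the order of growth.
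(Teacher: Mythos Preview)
Your proof is correct and follows the standard route; the paper itself does not give an argument but simply refers the reader to Lemmas~1--3 of Chapter~6 in Duren--Schuster, where the same chain of estimates (subadditivity for~\eqref{ineq01}, the $\theta\mapsto(1-\rho)u$ substitution for~\eqref{ineq02}, splitting at $1-\rho$ for~\eqref{ineq03}, polar coordinates combining the two for~\eqref{ineq04}, and averaging over disjoint pseudohyperbolic disks to reduce~\eqref{ineq05} to~\eqref{ineq04}) is carried out. Your sketch is thus a faithful expansion of what the cited reference does, with the exponent bookkeeping lined up correctly.
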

\begin{proof}
The techniques for the proof can be found in [Lemma 1-3, Chapter 6,
\cite{durenschuster}].
\end{proof}

\subsection{Suffiency part of Theorem~\ref{interpolationthm}}

Suppose $\Gamma \equiv (z_{mk})$ is uniformly discrete and
$D^+(\Gamma)<1/q$. Then $\Gamma$ is an interpolation sequence for
$A^{-n}$ where $n=1/q-\eps$ for some $\eps > 0$. Hence there exists a
sequence $(g_{mk}) \in A^{-n}$ such that $g_{mk}(z_{mk}) =
(1-|z_{mk}|^2)^{-n}$, $g_{mk}(z_{m'k'}) = 0$ for all $(m',k') \neq
(m,k)$, $\lVert g_{mk} \rVert \leq M(\Gamma)$ for all $m$ and $k$; where
$M(\Gamma)$ is the interpolation constant of $\Gamma$ for $A^{-n}$.

The interpolation problem is then solved by the formula
\begin{equation}\label{interpolationformula}
  f(z) = \sum_{m}\sum_{k} a_{mk}g_{mk}(z) \dfrac{(1-|z_{mk}|^2)^{n+s}}
    {(1-\bar{z}_{mk}z)^s}
\end{equation}
where $s$ is suffiently large.

It is clear that if the series converges point-wise to $f$ then
$f(z_{mk})=a_{mk}$ for all $(m,k)$. For each $0 < R < 1$, we will show
that the series converges uniformly on the disk $\{z: |z|\leq R \}$.
First, we see that
\begin{equation*}
\begin{split}
  \dfrac{|g_{mk}(z)|}{(1-\bar{z}_{mk}z)^s}
    & \leq (1-|z|^2)^{-n} \norm{g_{mk}(z)}_{A^{-n}} (1-R)^{-s} \\
    & \leq (1-R^2)^{-n} (1-R)^{-s} M = C(R,M(\Gamma)).
\end{split}
\end{equation*}
Thus, it suffices to show that
\begin{equation}\label{summationrequirement}
  \sum_{m}\sum_{k} |a_{mk}|(1-|z_{mk}|^2)^{n+s} < \infty.
\end{equation}
If $p \le 1$, then
\begin{equation}
  \sum_k |a_{mk}| \leq \left( \sum_k |a_{mk}|^p \right)^{1/p}
\end{equation}
and if $p > 1$ then, since the number of points $z_{mk}$ in the annulus
$A_m$ is $O(1/(1-r_m))$,
\begin{equation}
  \sum_k |a_{mk}|
    \leq
    \left(\frac{C}{(1-r_m)}\right)^{1/p'}\left( \sum_k |a_{mk}|^p \right)^{1/p}
\end{equation}
where $C$ depends only on $R$ and the separation constant of $\Gamma$.
We also have $1-|z_{mk}| \approx 1-r_m$. In either case the series in
\eqref{summationrequirement} is bounded by
\begin{equation}
  C\sum_m (1-r_m)^{n + s - 1} \left( \sum_k |a_{mk}|^p \right)^{1/p}.
\end{equation}
If $q \le 1$ this sum is bounded by
\begin{equation}
  C\left(\sum_m (1-r_m)^{(n + s - 1)q} \left( \sum_k
    |a_{mk}|^p \right)^{q/p} \right)^{1/q}.
\end{equation}
otherwise H\"older's inequality gives a bound of
\begin{equation}
  C\left(\sum_m (1-r_m)^{(n + s - 1)} \left( \sum_k
    |a_{mk}|^p \right)^{q/p} \right)^{1/q}\left( \sum_m (1-r_m)^{(n + s
    - 1)}\right)^{1/q'}.
\end{equation}
Now note that $\sum_m (1-r_m^2)$ is finite. Consequently, a
straightforward estimation shows that, for $s$ sufficiently large,
\begin{equation}
  \sum_m (1-r_m)^{n+s-1} \sum_k |a_{mk}| \leq C \norm{(a_{mk})}_{l^{p,q}}^q < \infty
\end{equation}
where $C$ depends only on $R,p,q,M(\Gamma)$ and the separation constant
of $\Gamma$. It follows that the series in \eqref{interpolationformula}
converges uniformly to an analytic function $f$ on each compact set of
$\ID $. To prove that $f$ belongs in $A(p,q)$, we only need to
verify that $\norm{f}_{A(p,q)} < \infty$. The proof is split into four
cases. The only property of the $g_{mk}$ that we will use is that for
all $m$ and $k$, $|g_{mk}(z)| \le M(\Gamma) (1 - r^2)^{-n}$. Thus, it
suffices to show that
\begin{equation}
    h(z) \defeq \sum_{m} \sum_{k} |a_{mk}| (1-|z|^2)^{-n}
        \frac{(1-|z_{mk}|^2)^{n+s}} {|1-\bar{z}_{mk}z|^s}
\end{equation}
belongs to $L(p,q)$

\paragraph{Case 1:} $q/p \leq 1, \,p>1$.\\
Let $x_1,x_2,y_1,y_2$ be real numbers (to be specified later) satisfying
$x_1+y_1=n+s, x_2+y_2=s$. By Holder's inequality, we have
\begin{multline*}
  \norm{ h }_{p,q}^{q}
     = \int_{0}^{1}
        \left\{ \int_{0}^{2\pi}
            \left[ \sum_{m} \sum_{k} |a_{mk}|(1-r^2)^{-n}
                \frac{(1-|z_{mk}|^2)^{n+s}} {|1-\bar{z}_{mk}z|^s}
            \right]^p \,d\theta
        \right\}^{q/p} 2r\,dr\\
    \le \int_{0}^{1} (1-r^2)^{-nq}
    \Biggl\{ \int_{0}^{2\pi}
        \left[ \sum_{m} \sum_{k} |a_{mk}|
            \frac{(1-|z_{mk}|^2)^{x_1 p}} {|1-\bar{z}_{mk}z|^{x_2 p}}
        \right] \times \\
    \times
        \left[ \sum_m \sum_k
            \frac{(1-|z_{mk}|^2)^{y_1 p'}} {|1-\bar{z}_{mk}z|^{y_2 p'}}
        \right]^{p/p'} \,d\theta
    \Biggl\}^{q/p} 2r\,dr.
\end{multline*}
Inequality \eqref{ineq05} then gives us
\begin{multline*}
  \norm{ h }_{p,q}^{q}
    \leq C\int_{0}^{1} (1-r^2)^{-nq}
        \Bigg\{ \int_{0}^{2\pi}
         \left[ \sum_m \sum_k |a_{mk}|
         \frac{(1-|z_{mk}|^2)^{x_1 p}} {|1-\bar{z}_{mk}z|^{x_2 p}}
         \right] \times \\
         \shoveright{ \times \left[ (1-r^2)^{(y_1-y_2)p'} \right]^{p/p'} \,d\theta
        \Bigg\}^{q/p} 2r\,dr } \\
  = C\int_{0}^{1} (1-r^2)^{-q(n-y_1+y_2)}
     \Bigg[ \sum_{m} \sum_{k} |a_{mk}|^p (1-|z_{mk}|^2)^{x_1 p}
      \int_{0}^{2\pi} \frac{1} {|1-\bar{z}_{mk}z|^{x_2 p}} \,d\theta
     \Bigg]^{q/p} 2r\,dr.
\end{multline*}
In view of inequality \eqref{ineq02}, we obtain
\begin{align*}
  \norm{ h }_{p,q}^{q}
    \le
    C \int_{0}^{1} (1-r^2)^{-q(n-y_1+y_2)}
    \left[
    \sum_{m} \sum_{k} |a_{mk}|^p
    \frac{(1-|z_{mk}|^2)^{x_1 p}} {(1-|z_{mk}|r)^{x_2 p-1}}
    \right]^{q/p}
    2r\,dr
\end{align*}
From inequalities \eqref{ineq01} and \eqref{ineq:noname01}, we have
\begin{align*}
  \norm{ h }_{p,q}^{q}
    & \le C \int_{0}^{1} (1-r^2)^{-q(n-y_1+y_2)}
        \sum_m
        \frac{(1-r_m^2)^{x_1 q}} {(1-r_m r)^{qx_2-q/p}}
        \left(
        \sum_k| a_{mk}|^p
        \right)^{q/p}
        2r\,dr \\
    & = C \sum_m (1-r_m^2)^{x_1 q}
        \int_0^1
        \frac{(1-r^2)^{-q(n-y_1+y_2)}} {(1-r_m r)^{qx_2-q/p}}\,
        2r\,dr
        \left(
        \sum_k|a_{mk}|^p
        \right)^{q/p}
\end{align*}
Finally, inequality \eqref{ineq03} implies
\begin{align*}
  \norm{ h }_{p,q}^{q}
    & \le
        C \sum_m (1-r_m^2)^{x_1 q} (1-r_m^2)^{-q(n-y_1+y_2+x_2-x_1-1/p)+1}
        \left(
        \sum_k|a_{mk}|^p
        \right)^{q/p} \\
    & =
        C \sum_m (1-r_m^2)^{1+q/p}
        \left(
        \sum_k| a_{mk}|^p
        \right)^{q/p} < \infty.
\end{align*}

To apply the inequalities above, $x_1,x_2,y_1,y_2$ must be chosen such that
\[\begin{cases}
1 < y_1 p' < y_2 p', \\
1 < x_2 p, \\
-1 < -q(n-y_1+y_2) < qx_2-q/p-1
\end{cases}\]
The following works for $\eps$ so small such that $n+\eps < 1/q$:
\[\begin{cases}
y_1 = 1/p' + \eps, \,y_2 = 1/p' + 2\eps \\
x_1 = n+s-y_1, \,x_2 = s-y_2 \\
s > 1-\eps
\end{cases}\]
We conclude that $f \in A(p,q)$.

\paragraph{Case 2:} $q/p \leq 1, \,p\le 1$.\\
From inequality \eqref{ineq01}, we obtain
\begin{align*}
  \norm{ h }_{p,q}^{q}
    &= \int_{0}^{1}
        \left\{
        \int_{0}^{2\pi}
        \left[
        \sum_{m} \sum_{k} |a_{mk}|(1-r^2)^{-n}
        \frac{(1-|z_{mk}|^2)^{n+s}} {|1-\bar{z}_{mk}z|^s}
        \right]^p
        \,d\theta
        \right\}^{q/p}
        2r\,dr\\
    &\le \int_{0}^{1}
        \left[
        \int_{0}^{2\pi} \sum_m \sum_k |a_{mk}|^p(1-r^2)^{-np}
        \frac{(1-|z_{mk}|^2)^{(n+s)p}} {|1-\bar{z}_{mk}z|^{sp}}\,
        \,d\theta
        \right]^{q/p} 2r\,dr \\
    &= \int_0^1
        \left[
        \sum_m\sum_k |a_{mk}|^p
        (1-r^2)^{-np} (1-|z_{mk}|^2)^{(n+s)p}
        \int_0^{2\pi} \frac{1} {|1-\bar{z}_{mk}z|^{sp}}
        \,d\theta
        \right]^{q/p} 2r\,dr
\end{align*}
In light of inequalities \eqref{ineq02} and \eqref{ineq:noname01}, this implies that
\begin{align*}
  \norm{ h }_{p,q}^{q}
    &\le C \int_0^1
        \left[
        \sum_m\sum_k |a_{mk}|^p
        (1-r^2)^{-np} (1-|z_{mk}|^2)^{(n+s)p}
        \frac{1} {(1-|z_{mk}|r)^{sp-1}}
        \right]^{q/p}
        2r\,dr \\
    &\le C \int_0^1
        \left[
        \sum_m (1-r^2)^{-np}
        \frac{(1-r_m^2)^{(n+s)p}} {(1-r_m r)^{sp-1}}
        \sum_k |a_{mk}|^p
        \right]^{q/p}
        2r\,dr,
\end{align*}
provided $s > 1/p$. By inequality \eqref{ineq01},
\begin{align*}
  \norm{ h }_{p,q}^{q}
    &\le C \int_0^1
        \sum_m (1-r^2)^{-nq}
        \frac{(1-r_m^2)^{(n+s)q}} {(1-r_m r)^{sq-q/p}}
        \left(
        \sum_k |a_{mk}|^p
        \right)^{q/p}
        2r\,dr \\
    &= C\sum_m (1-r_m^2)^{(n+s)q}
        \int_0^1
        \frac{(1-r^2)^{-nq}} {(1-r_m r)^{sq-q/p}}
        2r\,dr
        \left(
        \sum_k |a_{mk}|^p
        \right)^{q/p}
\end{align*}
By inequality \eqref{ineq03}, if $s > 1/p + 1/q - n$, we have
\begin{align*}
  \norm{ h }_{p,q}^{q}
    &\le C \sum_m (1-r_m^2)^{(n+s)q} (1-r_m^2)^{-nq-sq+q/p+1}
        \left(
        \sum_k |a_{mk}|^p
        \right)^{q/p} \\
    &= C \sum_m (1-r_m^2)^{1+q/p}
        \left(
        \sum_k |a_{mk}|^p
        \right)^{q/p} < \infty.
\end{align*}

\paragraph{Case 3:} $q/p > 1, \,p > 1$.\\
Let $x_1,y_1,x_2,y_2$ be real numbers such that $x_1+y_1=n+s,x_2+y_2=s$
and apply Holder's inequality to get
\begin{multline*}
  \norm{ h }_{p,q}^{q}
    = \int_{0}^{1}
        \left\{
        \int_{0}^{2\pi}
        \left[
        \sum_{m} \sum_{k} |a_{mk}|(1-r^2)^{-n}
        \frac{(1-|z_{mk}|^2)^{n+s}} {|1-\bar{z}_{mk}z|^s}
        \right]^p
        \,d\theta
        \right\}^{q/p}
        2r\,dr\\
    \le
    \int_{0}^{1} (1-r^2)^{-nq}
    \Bigg\{
    \int_{0}^{2\pi}
    \left[
    \sum_m\sum_k |a_{mk}|^p
    \frac{(1-|z_{mk}|^2)^{px_1}} {|1-\bar{z}_{mk}z|^{px_2}}
    \right] \times \\
    \times \left[
    \sum_m \sum_k
    \frac{(1-|z_{mk}|^2)^{p'y_1}} {|1-\bar{z}_{mk}z|^{p'y_2}}
    \right]^{p/p'}
    \,d\theta
    \Bigg\}^{q/p}
    2r\,dr
\end{multline*}
If $p'y_1<p'y_2$, we can apply inequality \eqref{ineq05} to get
\begin{multline*}
  \norm{ h }_{p,q}^{q}
    \le C \int_{0}^{1} (1-r^2)^{-nq}
    \left\{
    \int_{0}^{2\pi}
    \left[
    \sum_m\sum_k |a_{mk}|^p
    \frac{(1-|z_{mk}|^2)^{px_1}} {|1-\bar{z}_{mk}z|^{px_2}}
    \right]
    (1-r^2)^{(y_1-y_2)p}
    \,d\theta
    \right\}^{q/p}
    2r\,dr\\
    =
    C \int_{0}^{1} (1-r^2)^{(y_1-y_2-n)q}
    \left[
    \sum_m\sum_k |a_{mk}|^p
    (1-|z_{mk}|^2)^{px_1}
    \int_0^{2\pi}
    \frac{1} {|1-\bar{z}_{mk}z|^{px_2}}
    \,d\theta
    \right]^{q/p}
    2r\,dr
\end{multline*}
If $px_2>1$, inequality \eqref{ineq02} gives us
\begin{align*}
  \norm{ h }_{p,q}^{q}
    & \le C \int_{0}^{1} (1-r^2)^{(y_1-y_2-n)q}
        \left[
        \sum_m \sum_k |a_{mk}|^p
        (1-|z_{mk}|^2)^{px_1}
        \frac{1} {(1-|z_{mk}|r)^{px_2-1}}
        \right]^{q/p}
        2r\,dr
\end{align*}
Let $x_3,x_4,y_3,y_4$ be real numbers such that
$x_3+y_3=x_1,x_4+y_4=x_2-1/p$. By Holder's inequality,
\begin{multline*}
    \norm{ h }_{p,q}^{q}
    \le
    C \int_{0}^{1} (1-r^2)^{(y_1-y_2-n)q}
    \left[
    \sum_m
    \frac{(1-r_m^2)^{qx_3}} {(1-rr_m)^{qx_4}}
    \left(
    \sum_k |a_{mk}|^p\right)^{q/p}
    \right] \times \\
    \times \left[
    \sum_m
    \frac{(1-r_m^2)^{py_3(q/p)'}} {(1-rr_m)^{py_4(q/p)'}}
    \right]^{\frac{q}{p(q/p)'}}
    2r\,dr
\end{multline*}
If $y_3<y_4$, inequality \eqref{ineq05} gives us
\begin{align*}
  \norm{ h }_{p,q}^{q}
    &\le C \int_{0}^{1}
        (1-r^2)^{(y_1-y_2-n)q}
        \left[
        \sum_m
        \frac{(1-r_m^2)^{qx_3}} {(1-rr_m)^{qx_4}}
        \left(
        \sum_k |a_{mk}|^p
        \right)^{q/p}
        \right] \times{}\\
    &\qquad\quad {}\times (1-r^2)^{q(y_3-y_4)} 2r\,dr\\
    &= C \sum_m
        \left(
        \sum_k |a_{mk}|^p
        \right)^{q/p}
        (1-r_m^2)^{qx_3}
        \int_0^1
        \frac{(1-r^2)^{q(y_1-y_2+y_3-y_4-n)}} {(1-rr_m)^{qx_4}}\,
        2r\,dr
\end{align*}
If $-1<q(y_1-y_2+y_3-y_4-n)<qx_4 - 1$ then we can apply inequality \eqref{ineq03} to get
\begin{align*}
  \norm{ h }_{p,q}^{q}
    & \le C \sum_m
        \left(
        \sum_k |a_{mk}|^p
        \right)^{q/p}
        (1-r_m^2)^{qx_3}
        (1-r_m^2)^{q(y_1-y_2+y_3-y_4-x_4-n)+1} \\
    & = C \sum_m (1-r_m^2)^{1+q/p}
        \left(
        \sum_k |a_{mk}|^p
        \right)^{q/p} < \infty.
\end{align*}
It is easy to see that there exist $x_1,x_2,\dots$ satisfying the
conditions for the inequalities. Thus $f \in A(p,q)$.

\paragraph{Case 4:} $q/p > 1, \,p\leq 1$.\\
By inequalities \eqref{ineq01} and \eqref{ineq02}, we have
\begin{align*}
  \norm{ h }_{p,q}^{q}
    & = \int_{0}^{1}
        \left\{
        \int_{0}^{2\pi}
        \left[
        \sum_m \sum_k |a_{mk}| (1-r^2)^{-n}
        \frac{(1-|z_{mk}|^2)^{n+s}} {|1-\bar{z}_{mk}z|^s}
        \right]^p
        \,d\theta
        \right\}^{q/p}
        2r\,dr \\
    & \le \int_{0}^{1}
        \left\{
        \int_{0}^{2\pi}
        \sum_m \sum_k |a_{mk}|^p (1-r^2)^{-np}
        \frac{(1-|z_{mk}|^2)^{(n+s)p}} {|1-\bar{z}_{mk}z|^{sp}}\,
        d\theta
        \right\}^{q/p}
        2r\,dr \\
    & = \int_{0}^{1} (1-r^2)^{-nq}
        \left[
        \sum_m \sum_k |a_{mk}|^p
        (1-|z_{mk}|^2)^{(n+s)p}
        \int_{0}^{2\pi}
        \frac{1} {|1-\bar{z}_{mk}z|^{sp}}\,
        d\theta
        \right]^{q/p}
        2r\,dr \\
    &\le C \int_{0}^{1} (1-r^2)^{-nq}
        \left[
        \sum_m \sum_k |a_{mk}|^p
        (1-|z_{mk}|^2)^{(n+s)p}
        \frac{1} {(1-|z_{mk}|r)^{sp-1}}
        \right]^{q/p}
        2r\,dr
\end{align*}
Let $x_1,y_1,x_2,y_2$ be real numbers such that
$x_1+y_1=n+s,x_2+y_2=s-1/p$ and apply Holder's inequality to get
\begin{multline*}
    \norm{ h }_{p,q}^{q}
    \le
    C \int_{0}^{1} (1-r^2)^{-nq}
    \left[
    \sum_m
    \left(
    \sum_k |a_{mk}|^p
    \right)^{q/p}
    \frac{(1-r_m^2)^{qx_1}} {(1-rr_m)^{qx_2}}
    \right] \times \\
    \times \left[
    \sum_m
    \frac{(1-r_m^2)^{py_1(q/p)'}} {(1-rr_m)^{py_2(q/p)'}}
    \right]^{(q/p)/(q/p)'}
    2r\,dr
\end{multline*}
Inequalities \eqref{ineq05} and \eqref{ineq03} then give us
\begin{align*}
  \norm{ h }_{p,q}^{q}
    & \le C \int_{0}^{1} (1-r^2)^{-nq}
        \left[
        \sum_m
        \left(
        \sum_k |a_{mk}|^p
        \right)^{q/p}
        \frac{(1-r_m^2)^{qx_1}} {(1-rr_m)^{qx_2}}
        \right]
        (1-r^2)^{(y_1-y_2)q} 2r\,dr \\
    & = C \sum_m (1-r_m^2)^{qx_1}
        \int_0^1
        \frac{(1-r^2)^{q(y_1-y_2-n)}} {(1-rr_m)^{qx_2}}\,
        dr
        \left(
        \sum_k |a_{mk}|^p
        \right)^{q/p} \\
    & \le C \sum_m (1-r_m^2)^{qx_1} (1-r_m^2)^{q(y_1-y_2-x_2-n)+1}
        \left(
        \sum_k |a_{mk}|^p
        \right)^{q/p} \\
    & = C \sum_m (1-r_m^2)^{1+q/p}
        \left(
        \sum_k |a_{mk}|^p
        \right)^{q/p} < \infty
\end{align*}
The last step is choosing $x_1,x_2,y_1,y_2$ for applying the
inequalities. Having verified all the cases, we conclude that $f \in
A(p,q)$.

This completes the proof of the suffiency part of Theorem~\ref{interpolationthm}.

\subsection{Necessity part of Theorem~\ref{interpolationthm}}

Suppose $\Gamma$ is an interpolation sequence for $A(p,q)$. Corollary
\ref{cor:necessityofseparation} shows that $\Gamma$ is uniformly
discrete. To prove $D^+(\Gamma) \leq 1/q$, we will show that $\Gamma$ is
an interpolation sequence for either $A^q$ or $A^p_{\alpha}$ for all
$\alpha$ with $ (1 + \alpha)/p > 1/q$. In the latter case, the known
results for the Bergman spaces will imply that the density is less than
or equal to $1/q$ and then strict equality follows thanks to the
stability of interpolation sequences under small perturbations (see
Remark \ref{rem:stability}).

The interpolation problem is solved by the formula
\begin{equation}
  f(z) = \sum_j a_j g_j(z) \dfrac{(1-|z_j|^2)^{n+s}}{(1-z\bar{z}_j)^s}
\end{equation}
provided the sum converges for the space in question. Here $n=1/p+1/q$,
and $g_j$ are functions in $A(p,q)$ satisfying $g_j(z_j) =
(1-|z_j|^2)^{-n}$, $g_j(z_{j'}) = 0$ for all $j'\neq j$, and $\lVert g_j
\rVert_{A(p,q)} \leq M$ indepent of $j$. ($M$ is the interpolation
constant of $A(p,q)$.)

First, we show that for $s$ sufficiently large, the series above
converges unifomly on each compact set of the unit disk to an analytic
function $f$. This is done similarly to the suffiency case. The proof
now is split into four cases. We verify that $\norm{f}$ is bounded in
$A^q$ in the first two cases and that it is bounded in $A^{p}_{\alpha}$
(when $(1 + \alpha)/p > 1/q$) in the last two. Since the case $p=q$ is
known for Bergman spaces, we do not need to include it.

\paragraph{Case 1:} $q/p < 1, \,q \leq 1$.\\
By inequality \eqref{ineq01},
\begin{align*}
  \norm{ f }_{A^q}^{q}
    &= \int_0^1\int_0^{2\pi}
        \left|
        \sum_j a_j g_j(z)
        \frac{(1-|z_j|^2)^{n+s}}{(1-z\bar{z}_j)^s}
        \right|^q \,d\theta 2r\,dr \\
    &\le \int_0^1\int_0^{2\pi}
        \sum_j |a_j|^q |g_j(z)|^q
        \frac{(1-|z_j|^2)^{(n+s)q}}{|1-z\bar{z}_j|^{sq}}
        \,d\theta 2r\,dr \\
    &= \int_0^1
        \sum_j |a_j|^q (1-|z_j|^2)^{(n+s)q}
        \left[\int_0^{2\pi}\frac{|g_j(z)|^q}{|1-z\bar{z}_j|^{sq}} \,d\theta\right]
        2r\,dr
\end{align*}

Apply Holder's inequality, we have
\begin{multline*}
    \norm{ f }_{A^q}^{q}
    \le
    \int_0^1
    \sum_j |a_j|^q (1-|z_j|^2)^{(n+s)q}
    \left[\int_0^{2\pi}|g_j(z)|^p \,d\theta\right]^{q/p} \times \\
    \times \left[\int_0^{2\pi} \frac{1}{|1-z\bar{z}_j|^{sq(p/q)'}}
    \,d\theta\right]^{1/(p/q)'} 2r\,dr
\end{multline*}
If $s$ is sufficiently large, inequality \eqref{ineq03} then gives us
\begin{align*}
  \norm{ f }_{A^q}^{q}
    &\le \int_0^1
        \sum_j |a_j|^q (1-|z_j|^2)^{(n+s)q}
        \left[\int_0^{2\pi}|g_j(z)|^p \,d\theta\right]^{q/p}
        \frac{1}{(1-r|z_j|)^{q(s+1/p-1/q)}}
        2r\,dr \\
    &= \sum_j |a_j|^q (1-|z_j|^2)^{(n+s)q}
        \int_0^1
        \frac{1}{(1-r|z_j|)^{q(s+1/p-1/q)}}
        \left[\int_0^{2\pi}|g_j(z)|^p \,d\theta\right]^{q/p} 2r\,dr \\
    &\le C\sum_j |a_j|^q (1-|z_j|^2)^{(n+s)q}
        \frac{1}{(1-|z_j|^2)^{q(s+1/p-1/q)}}
        \int_0^1\left[\int_0^{2\pi}|g_j(z)|^p \,d\theta\right]^{q/p} 2r\,dr \\
    &\le C\sum_j |a_j|^q (1-|z_j|^2)^2
        \norm{ g_j }_{A(p,q)}^q \\
    &\le C\sum_j (1-|z_j|^2)^2 |a_j|^q < \infty.
\end{align*}

\paragraph{Case 2:} $q/p < 1, \,q > 1$.\\
Let $x_1,x_2,y_1,y_2$ be real numbers satisfying $x_1+y_1=n+s,
x_2+y_2=s$. Then by Holder's inequality and inequality \eqref{ineq05},
we have
\begin{align*}
  \norm{ f }_{A^q}^{q}
    &= \int_0^1\int_0^{2\pi}
        \left|
        \sum_j a_j g_j(z)
        \frac{(1-|z_j|^2)^{n+s}}{(1-z\bar{z}_j)^s}
        \right|^q \,d\theta 2r\,dr \\
    &\le C\int_0^1\int_0^{2\pi}
        \left[
        \sum_j |a_j|^q |g_j(z)|^q
        \frac{(1-|z_j|^2)^{qx_1}}{|1-z\bar{z}_j|^{qx_2}}
        \right]
        \left[
        \sum_j\frac{(1-|z_j|^2)^{q'y_1}}{|1-z\bar{z}_j|^{q'y_2}}
        \right]^{q/q'} \,d\theta 2r\,dr \\
    &\le C\int_0^1\int_0^{2\pi}
        \left[
        \sum_j |a_j|^q |g_j(z)|^q
        \frac{(1-|z_j|^2)^{qx_1}}{|1-z\bar{z}_j|^{qx_2}}
        \right]
        (1-r^2)^{q(y_1-y_2)}
        \,d\theta 2r\,dr \\
    &= C\int_0^1
        \sum_j |a_j|^q
        (1-r^2)^{q(y_1-y_2)}
        (1-|z_j|^2)^{qx_1}
        \left[\int_0^{2\pi}\frac{|g_j(z)|^q}{|1-z\bar{z}_j|^{qx_2}}\, d\theta\right]
        \,dr
\end{align*}
Holder's inequality again gives us
\begin{multline*}
    \norm{ f }_{A^q}^{q}
    \le
    C\int_0^1
    \sum_j |a_j|^q
    (1-r^2)^{q(y_1-y_2)}
    (1-|z_j|^2)^{qx_1}
    \left(
    \int_0^{2\pi} |g_j(z)|^p \,d\theta
    \right)^{q/p} \\
    \times
    \left(
    \int_0^{2\pi}
    \frac{1}{|1-z\bar{z}_j|^{qx_2(p/q)'}}\,
    d\theta
    \right)^{1/(p/q)'}
    2r\,dr
\end{multline*}
By inequality \eqref{ineq02}, we have
\begin{multline*}
  \norm{ f }_{A^q}^{q}
    \le
    C\int_0^1
    \sum_j |a_j|^q
    (1-|z_j|^2)^{qx_1}
    (1-r^2)^{q(y_1-y_2)}
    \frac{1}{(1-r|z_j|)^{q(x_2+1/p-1/q)}} \times \\
    \times \left(
    \int_0^{2\pi} |g_j(z)|^p\, d\theta
    \right)^{q/p}
    2r\,dr.
\end{multline*}
Since $x_1-x_2+y_1-y_2=n$, if we choose $x_1=2/q$ then $y_1-y_2=x_2+1/p-1/q$ and thus,
\begin{equation*}
  (1-|z_j|^2)^{qx_1}
    \frac{(1-r^2)^{q(y_1-y_2)}}{(1-r|z_j|)^{q(x_2+1/p-1/q)}}
    \le
    C(1-|z_j|^2)^2
    \frac{(1-r^2)^{q(y_1-y_2)}}{(1-r)^{q(y_1-y_2)}}
    \le
    C(1-|z_j|^2)^2.
\end{equation*}
It follows that
\begin{equation*}
  \norm{ f }_{A^q}^{q}
    \le
    C\sup_j\left\{\norm{ g_j}_{p,q}^{q}\right\}
    \sum_j (1-|z_j|^2)^2 |a_j|^q
    <\infty.
\end{equation*}

\paragraph{Case 3:} $q/p > 1, \,p \leq 1$.\\
Given $\alpha$ with $(1 + \alpha)/p > 1/q$,
\begin{equation*}
\begin{split}
  \norm{ f }_{A_{\alpha}^p}^p
    & =
    \int_{\ID } |f(z)|^p (1-|z|^2)^{\alpha} \,dA(z) \\
    & =
    \int_0^1 \int_0^{2\pi}
    \left|
    \sum_k a_k g_k(z)
    \frac{(1-|z_k|^2)^{n+s}}{(1-z\bar{z}_k)^s}
    \right|^p
    (1-r^2)^{\alpha}
    \,d\theta 2r\,dr
\end{split}
\end{equation*}
By inequality \eqref{ineq01}, we have
\begin{equation*}
\begin{split}
  \norm{ f }_{A_\alpha^p}^p
    & \le \int_0^1 \int_0^{2\pi}
        \sum_k |a_k|^p |g_k(z)|^p
        \frac{(1-|z_k|^2)^{(n+s)p}}{|1-z\bar{z}_k|^{sp}}
        (1-r^2)^{\alpha}
        \,d\theta 2r\,dr \\
    & = \sum_k |a_k|^p
        (1-|z_k|^2)^{(n+s)p}
        \int_0^1 (1-r^2)^{\alpha}
        \left[
        \int_0^{2\pi} \frac{|g_k(z)|^p}{|1-z\bar{z}_k|^{sp}}\, d\theta
        \right] 2r\,dr \\
    & \le \sum_k |a_k|^p
        (1-|z_k|^2)^{(n+s)p}
        \int_0^1
        \frac{(1-r^2)^{\alpha}}{(1-r|z_k|)^{sp}}
        \left[
        \int_0^{2\pi} |g_k(z)|^p\, d\theta
        \right] 2r\,dr
\end{split}
\end{equation*}
Holder's inequality then gives us
\begin{multline*}
  \norm{ f }_{A_{\alpha}^p}^p
    \le
    \sum_k |a_k|^p
    (1-|z_k|^2)^{(n+s)p}
    \left[
    \int_0^1
    \left(
    \int_0^{2\pi} |g_k(z)|^p\, d\theta
    \right)^{q/p} 2r\,dr
    \right]^{p/q} \times \\
    \times \left[
    \int_0^1
    \frac{(1-r^2)^{\alpha(q/p)'}}{(1-r|z_k|)^{sp(q/p)'}} 2r\,dr
    \right]^{1/(q/p)'}
\end{multline*}
From inequality \eqref{ineq03}, if $s$ is sufficiently large, we get
\begin{equation*}
\begin{split}
  \norm{ f }_{A_{\alpha}^p}^p
    & \le
    C\sum_k |a_k|^p
    (1-|z_k|^2)^{(n+s)p}
    \norm{ g_k }_{A(p,q)}^p
    (1-|z_k|^2)^{\alpha-sp+1/(q/p)'} \\
    & \le
    C\sum_k (1-|z_k|^2)^{2+\alpha} |a_k|^p < \infty.
\end{split}
\end{equation*}

\paragraph{Case 4:} $q/p > 1, \,p > 1$.\\
The norm of $f$ (if it exists) in the weighted Bergman space
$A^{p}_{\alpha}$ is
\begin{equation*}\begin{split}
  \norm{ f }_{A^{p}_{\alpha}}^p
    & = \int_{\ID }
        |f(z)|^p (1-|z|^2)^{\alpha}
        \,dA(z) \\
    & = \int_0^1 (1-r^2)^{\alpha}
        \int_0^{2\pi}
        \left|
        \sum_k a_k g_k(z)
        \frac{(1-|z_k|^2)^{n+s}}{(1-z\bar{z}_k)^s}
        \right|^p
        \,d\theta\, 2r\,dr
\end{split}
\end{equation*}
Let $x_1,y_1,x_2,y_2$ be real numbers satisfying $x_1+y_1=n+s,
x_2+y_2=s$. Then by Holder's inequality, we have
\begin{multline*}
  \norm{ f }_{A_{\alpha}^p}^p
    \leq
    \int_0^1 (1-r^2)^{\alpha}
    \int_0^{2\pi}
    \left[
    \sum_k |a_k|^p |g_k(z)|^p
    \frac{(1-|z_k|^2)^{px_1}} {|1-z\bar{z}_k|^{px_2}}
    \right] \times \\
    \times \left[
    \sum_k
    \frac{(1-|z_k|^2)^{p'y_1}} {|1-z\bar{z}_k|^{p'y_2}}
    \right]^{p/p'}
    \,d\theta 2r\,dr
\end{multline*}
In light of inequality \eqref{ineq05}, this implies that
\begin{multline*}
  \norm{ f }_{A_{\alpha}^p}^p\\
    \shoveleft{\quad\le C\int_0^1 \int_0^{2\pi}
    \left[
    \sum_k |a_k|^p |g_k(z)|^p
    \frac{(1-|z_k|^2)^{px_1}} {|1-z\bar{z}_k|^{px_2}}
    \right]
    (1-r^2)^{p\left(y_1-y_2+\alpha/p\right)}
    \,d\theta 2r\,dr} \\
    \shoveleft{ \quad = C\sum_k |a_k|^p (1-|z_k|^2)^{px_1}
    \int_0^1 (1-r^2)^{p\left(y_1-y_2+\alpha/p\right)}
    \left( \int_0^{2\pi}
    \frac{|g_k(z)|^p} {|1-z\bar{z}_k|^{px_2}}
    \,d\theta
    \right) } \\
    \shoveleft{ \quad \le C\sum_k |a_k|^p (1-|z_k|^2)^{px_1}
    \int_0^1 \frac{(1-r^2)^{p\left(y_1-y_2+\alpha/p\right)}} {(1-r|z_k|)^{px_2}}
    \left(
    \int_0^{2\pi} |g_k(z)|^p \,d\theta
    \right)
    2r\,dr } \\
    \shoveleft{ \quad \le C\sum_k |a_k|^p (1-|z_k|^2)^{px_1}
    \left[
    \int_0^1 \left( \int_0^{2\pi} |g_k(z)|^p \,d\theta \right)^{q/p}
    2r\,dr
    \right]^{p/q} \times } \\
    \times
    \left[
    \int_0^1
    \frac{(1-r^2)^{p\left(y_1-y_2+\alpha/p\right)(q/p)'}} {(1-r|z_k|)^{px_2(q/p)'}}\,
    2r\,dr
    \right]^{1/(q/p)'}
\end{multline*}
Inequality \eqref{ineq03} then gives us
\begin{align*}
  \norm{ f }_{A_{\alpha}^p}^p
    & \le C\sum_k |a_k|^p \norm{ g_k }_{A(p,q)}^p
      (1-|z_k|^2)^{px_1 + p\left(y_1-y_2-x_2+\alpha/p \right)+1/(q/p)'} \\
    & \le C\sum_k (1-|z_k|^2)^{2+\alpha} |a_k|^p < \infty.
\end{align*}
Finally, we choose $x_1,x_2,y_2,y_2$ such that they satisfy the
conditions of the inequalities above.

This completes the proof of the necessity part of Theorem \ref{interpolationthm}.

\section{Sampling sequences for mixed-norm spaces}\label{sec:sampling}

Interpolation problems often go together with sampling problems. For a
function space $(A, \norm{.}_A)$ on $\Omega$, a sequence of distinct
points $\Gamma = (z_m) \subset \Omega$ and a sequence space $(X,
\norm{.}_X)$, $\Gamma$ is said to be a sampling sequence for $(A, X)$ if
there exist positive constants $K_1, K_2$ such that for all $f \in A$
\begin{equation}\label{samplinginequalities}
  K_1\norm{f}_A \leq \norm{(f(z_m))}_X \leq K_2\norm{f}_A
\end{equation}

The sampling problem for mixed-norm spaces is to characterize sampling
sequences for the pair $(A(p,q), l^{p,q}(\Gamma))$.
%
% rest of original thesis moved to after \end{document}
%
Our method is based on the paper \cite{luecking2}. There it is shown
that a sequence $\Gamma$ is sampling for $A^q$ if and only if there
exists $r < q$ such that every limit set under a sequence of M\"obius
transformations of $\Gamma$ is a set of uniqueness for $A^r$. Thus, the
density condition that is known to be characteristic of sampling in
$A^q$ must be equivalent to this limit condition. We will show the
following theorem.
\pagebreak

\begin{theorem}
The following are equivalent
\begin{enumerate}
    \item $\Gamma$ is sampling for $A(p,q)$.
    \item $\Gamma$ is sampling for $A^q$.
    \item $\Gamma$ is sampling for $A^p_\alpha$ where $(1+\alpha)/p =
        1/q$.
    \item $\Gamma$ contains a uniformly discrete subsequence $\Gamma'$
        with $D^-(\Gamma') > 1/q$.
\end{enumerate}
\end{theorem}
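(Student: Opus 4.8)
The equivalences (2)$\Leftrightarrow$(4) and (3)$\Leftrightarrow$(4) are immediate from Theorem~\ref{weightedbergmansampling}: since $A^q=A^q_0$ has sampling threshold $(1+0)/q=1/q$, and since the hypothesis $(1+\alpha)/p=1/q$ makes the threshold for $A^p_\alpha$ equal to $1/q$ as well, each of these two statements says precisely that $\Gamma$ contains a uniformly discrete subsequence of lower uniform density exceeding $1/q$. Hence the whole content of the theorem is the equivalence of (1) with the others, and it suffices to prove (1)$\Leftrightarrow$(2). Since the upper inequality in \eqref{samplinginequalities} together with Theorem~\ref{thm:discretebounded} forces a sampling sequence to be a finite union of uniformly discrete sequences, and since deleting points cannot destroy the lower inequality, I would reduce throughout to the case that $\Gamma$ is uniformly discrete.

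The plan is to show that, exactly as for $A^q$ in \cite{luecking2}, a uniformly discrete $\Gamma$ is sampling for $A(p,q)$ if and only if there is $r<q$ such that every sequence arising as a subsequential limit of $\phi_\nu(\Gamma)$, with $\phi_\nu$ running over the M\"obius automorphisms of $\ID$, is a set of uniqueness for $A^r$; comparison with \cite{luecking2} then gives (1)$\Leftrightarrow$(2) at once. For the forward direction I would argue by contradiction: if the lower sampling inequality fails there are $f_\nu$ with $\triplenorm{f_\nu}_{A(p,q)}=1$ and $\norm{R_\Gamma f_\nu}_{l^{p,q}(\Gamma)}\to0$; the subharmonic averaging inequalities of Section~\ref{sec:discrete} (the estimate behind Theorem~\ref{thm:discretebounded}) show that the mass $\triplenorm{f_\nu}^q_{A(p,q)}$, decomposed over the annuli $A_n$, cannot concentrate in a fixed compact subset of $\ID$, so one can choose $w_\nu$ with $|w_\nu|\to1$ carrying a fixed fraction of it; applying the automorphism $\phi_\nu$ sending $w_\nu$ to $0$ and extracting a normal-family limit $f$ of a suitable renormalization of $f_\nu\circ\phi_\nu^{-1}$, one obtains a nonzero analytic $f$ that belongs to $A^r$ for every $r<q$ — the drop from $q$ to $r$ absorbing the distortion introduced by the non-invariance of $\triplenorm{\cdot}$ — and that vanishes on the limit set $\lim\phi_\nu(\Gamma)$, contradicting the uniqueness hypothesis. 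The reverse direction is the contrapositive, using that the sampling constants of $\Gamma$ are inherited by its limit sets. Running the same scheme with the balanced weighted Bergman space $A^p_\alpha$, $(1+\alpha)/p=1/q$, in place of $A^q$ — its reproducing-kernel-type test functions behave identically in all three spaces — yields (1)$\Leftrightarrow$(3).

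The main obstacle is exactly the point at which \cite{luecking2} uses conformal invariance, which is not available here: under $\phi_\nu$ the norm $\triplenorm{\cdot}_{A(p,q)}$ is not preserved, the M\"obius image of a thin annulus $A_n$ is smeared across many annuli, and in the angular direction $\phi_\nu$ badly distorts the polar rectangles. What should make the argument survive is that near the boundary the angular ($L^p$-in-$\theta$) direction of $A(p,q)$ is governed entirely by subharmonic averages over pseudohyperbolic disks of fixed radius — this is precisely why $p$ never enters the final answer — so that once the radial scale has been normalized by $\phi_\nu$ the limiting object is again controlled by an $A^r$ estimate with $r$ slightly below $q$ rather than by a genuine mixed-norm estimate. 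The delicate part, I expect, is the quantitative bookkeeping of these distortions and the verification that the surviving estimate still forces the normal-family limit to be nonzero.
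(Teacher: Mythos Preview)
Your reduction of (2)$\Leftrightarrow$(3)$\Leftrightarrow$(4) to Theorem~\ref{weightedbergmansampling} is fine and matches the paper. The substantive difference is in how you attack (1).

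The paper does \emph{not} try to re-establish the M\"obius-limit characterization of \cite{luecking2} directly for $A(p,q)$. For sufficiency it uses the Bergman-space result as a black box: Lemma~\ref{lueckinginequality} (applied in $A^r_\alpha$ with $r<p$ and $\alpha=p/q-1$, or in $A^r$ with $r<q$) already gives, for each ``good'' point $\zeta$ of $f$, the pointwise bound
\[
  |f(\zeta)|^r \le C\int_\ID |f(z)|^r K_\alpha(\zeta,z)(1-|z|^2)^2\,d\mu(z),
\]
and the mixed-norm work is then purely operator-theoretic: a Berezin-type lemma (Lemma~\ref{lower-means}) controlling the bad set, plus a Schur-type argument showing that integration against $K_\alpha(\zeta,z)(1-|z|^2)^2\,d\mu$ maps $l^{p/r,q/r}(\Gamma)$ boundedly into $L(p/r,q/r)$. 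No M\"obius composition of $f$ ever occurs, so conformal non-invariance is simply irrelevant. For necessity the paper uses the inclusions $A^p_\alpha\subset A(p,q)\subset A^{\lambda p}_\alpha$ (when $p<q$) and $A(p,q)\subset A^q$, $A^q\subset A(\lambda p,\lambda q)$ (when $q<p$), together with a zero-set perturbation lemma, to transfer the ``uniqueness for all weak limits'' condition between the spaces without ever composing an $A(p,q)$-function with a M\"obius map.

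Your route, by contrast, runs straight into the obstacle you yourself flag. The specific gap is your ``reverse direction'': you claim that sampling constants of $\Gamma$ in $A(p,q)$ are inherited by its M\"obius limits. In \cite{luecking2} that step is the observation that if $\Gamma$ is sampling for $A^q$ then so is $\phi(\Gamma)$ with the same constants, because $f\mapsto (f\circ\phi)(\phi')^{2/q}$ is an isometry of $A^q$. There is no such isometry of $A(p,q)$ when $p\ne q$, and nothing in your sketch supplies a substitute; without it you cannot conclude that a M\"obius limit of $\Gamma$ is even a uniqueness set for $A(p,q)$, let alone for some $A^r$. Your forward direction has the same problem in milder form: to get the normal-family limit $f$ into $A^r$ you need a uniform bound on $\lVert f_\nu\circ\phi_\nu^{-1}\rVert$ in some invariant norm, and ``the drop from $q$ to $r$ absorbs the distortion'' is not an argument---a M\"obius map sends a single annulus $A_n$ across unboundedly many annuli, so the radial $\ell^q$ structure is destroyed, not merely perturbed. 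The paper's approach was designed precisely to avoid having to quantify this.
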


The equivalence of the last three conditions is known work of K.~Seip
\cite{seip} and A.~Schuster \cite{schusterarticle}. We will prove that
the first condition is equivalent to the second when $q<p$ and to the
third when $p<q$.

But to start, we need a mixed norm version of \cite[Lemma 3.4]{luecking2}.

\begin{lemma}\label{lower-means}
Let $f \in A(p,q)$ and let $r < \min(p,q)$ and $\alpha > r/q - 1$.
For any $z \in \ID$ we have.
\begin{equation}\label{Berezin}
    |f(z)|^r
    \le \frac{\alpha + 1}{\pi} \int_{\ID} |f(w)|^r
        \frac{(1 - |z|^2)^{2 + \alpha}(1 - |w|^2)^\alpha}
             {|1 - \bar w z|^{4 + 2\alpha}}\,dA(w)\,.
\end{equation}
Moreover, there is a constant $C$ depending only on $p$, $q$ and $r$
such that if $B_\epsilon = B_\epsilon(f)$ is the set of points where
\begin{equation}\label{smallmeans}
    |f(z)|^r \le \epsilon \int_{\ID} |f(w)|^r \frac { (1 - |z|^2)^{2 +
        \alpha}(1-|w|^2)^\alpha } { | 1 - \bar w z |^{4 + 2\alpha} }
        \,dA(w)\,.
\end{equation}
\tu(we think of $B_\epsilon$ as the set of `bad' points\/\tu) then
\begin{equation}
    \norm{f\chi_{B_\eps}}_{L(p,q)} \le C\epsilon \norm{f}_{L(p,q)}\,.
\end{equation}
Therefore, $\epsilon > 0$ may be chosen independent of $f$ so that if
$G_\epsilon = \ID \setminus B_\epsilon$ is the set of `good' points
for $f$ then
\begin{equation}\label{largeintegral}
    \norm{f\chi_{G_\eps}}_{L(p,q)} \ge (1/2) \norm{f}_{L(p,q)}\,.
\end{equation}
\end{lemma}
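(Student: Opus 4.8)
The plan is to treat the pointwise inequality \eqref{Berezin} and the
mass-distribution estimate separately. For the first, I would start from the
sub-mean-value property: since $|f|^r$ is subharmonic for every $r>0$, we
have $|f(z)|^r \le \frac{\alpha+1}{\pi}\int_{\ID}|f(\varphi_z(\zeta))|^r
(1-|\zeta|^2)^\alpha\,dA(\zeta)$, where $\varphi_z$ is the involutive
M\"obius map exchanging $0$ and $z$; this is the standard weighted
sub-mean-value inequality on the disk. Changing variables $w=\varphi_z(\zeta)$,
using $|\varphi_z'(\zeta)|^2 = (1-|z|^2)^2/|1-\bar z\zeta|^4$ and the identity
$1-|\varphi_z(\zeta)|^2 = (1-|z|^2)(1-|\zeta|^2)/|1-\bar z\zeta|^2$, converts
the integral to the weighted Berezin-type kernel
$(1-|z|^2)^{2+\alpha}(1-|w|^2)^\alpha/|1-\bar wz|^{4+2\alpha}$, which is exactly
\eqref{Berezin}. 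The constraint $\alpha>r/q-1$ is not needed for this step
(only $\alpha>-1$ is, so that the weight is integrable), so I would note it is
recorded here only because it is used in what follows.

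For the main estimate, I would integrate the defining inequality
\eqref{smallmeans} for the bad set. Raise \eqref{smallmeans} to the power $p/r$
and integrate $d\theta$ over a circle $|z|=\rho$; then raise to $q/p$ and
integrate $2\rho\,d\rho$ over $(0,1)$. The goal is to bound
$\norm{f\chi_{B_\eps}}_{L(p,q)}^q$ by $\eps^q$ times an operator applied to
$|f|^r$, and then to show that operator is bounded on the appropriate mixed-norm
lattice. Concretely, set $F(w) = |f(w)|^r$ and let $T$ be the integral operator
with the kernel above; \eqref{smallmeans} says $F(z)\le \eps\,(TF)(z)$ on
$B_\eps$, so $\norm{f\chi_{B_\eps}}_{L(p,q)}^r = \norm{F\chi_{B_\eps}}_{L(p/r,q/r)}
\le \eps\,\norm{TF}_{L(p/r,q/r)}$. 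The crux is therefore the boundedness
$\norm{TF}_{L(p/r,q/r)} \le C\,\norm{F}_{L(p/r,q/r)} = C\,\norm{f}_{L(p,q)}^r$,
with $C$ independent of $f$; combined with the previous display this gives
$\norm{f\chi_{B_\eps}}_{L(p,q)}\le (C\eps)^{1/r}\norm{f}_{L(p,q)}$, which is the
claimed estimate after renaming the constant.

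To prove that $T$ is bounded on $L(p/r,q/r)$ I would use a Schur-type test
adapted to the mixed norm, exactly the kind of computation that
Lemma~\ref{lem:inequalities} is designed to support. Split the kernel as a
product of a power of $(1-|z|^2)$, a power of $(1-|w|^2)$, and a power of
$|1-\bar wz|$, and choose test functions $(1-|w|^2)^{-b}$: inequality
\eqref{ineq02} handles the angular integral producing a power of
$(1-|z||w|)^{-1}$, inequality \eqref{ineq03} (or \eqref{ineq04}) handles the
radial integral, and the exponent bookkeeping reduces to the inequalities
$-1<\alpha$, $4+2\alpha>$ (the relevant linear combination), which is where
$\alpha>r/q-1$ enters—it is precisely the condition making the radial exponent
admissible for \eqref{ineq03} once one passes from $L^1$ in $w$ to $L^{q/r}$ in
the outer variable via H\"older (handling the cases $q/r\le 1$ and $q/r>1$
separately, as in the four-case structure of Section~\ref{sec:main}). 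I expect
this Schur/exponent-matching step, and in particular verifying that a single
choice of test exponent works simultaneously in the radial and angular
variables across the cases $p/r\lessgtr 1$ and $q/r\lessgtr 1$, to be the main
obstacle; everything else is either the standard subharmonicity argument or a
direct change of variables. Finally, \eqref{largeintegral} is immediate: choose
$\eps$ so small that $C\eps<1/2$ in $\norm{f\chi_{B_\eps}}_{L(p,q)}\le
(1/2)\norm{f}_{L(p,q)}$, and then by the quasi-triangle inequality (or, for
$p,q\ge1$, the triangle inequality, and in general the inequality
$\norm{u+v}^s\le\norm{u}^s+\norm{v}^s$ with $s=\min(p,q,1)$ from
\eqref{def:mixednormmatrix}) applied to $f=f\chi_{B_\eps}+f\chi_{G_\eps}$ we get
$\norm{f\chi_{G_\eps}}_{L(p,q)}\ge (1/2)\norm{f}_{L(p,q)}$ after adjusting
$\eps$ once more if necessary.
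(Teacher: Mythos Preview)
Your overall strategy matches the paper's: establish \eqref{Berezin} by subharmonicity and a M\"obius change of variables, then reduce the bad-set estimate to the boundedness of the integral operator $T$ with kernel $K_\alpha$ on $L(p/r,q/r)$, and finally deduce \eqref{largeintegral} by choosing $\eps$ small.

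The difference lies in how you propose to prove $T$ is bounded. The paper exploits the fact that $K_\alpha(z,w)$ depends on the angular variables only through $\theta-t$, so for each fixed pair of radii the angular action of $T$ is a convolution. Young's inequality then gives $\norm{Tg}_{L^p(d\theta)}\le \int_0^1 \norm{K_\alpha}_{L^1(dt)}\,\norm{g}_{L^p(dt)}\,d\rho$, and inequality \eqref{ineq02} computes $\norm{K_\alpha}_{L^1(dt)}$. This collapses the problem to a \emph{one-variable} Schur test on $L^q(2r\,dr)$ with test functions $(1-\rho^2)^{-\eps}$, where the two required estimates are exactly instances of \eqref{ineq03}; the condition $\alpha>1/q-1$ (i.e.\ $\alpha>r/q-1$ before the relabelling) is precisely what allows a common $\eps$ to satisfy both. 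Your plan to run a Schur-type test directly on the mixed-norm space, splitting into cases modelled on Section~\ref{sec:main}, would ultimately work but is more laborious and less transparent; the convolution step removes the $p$-dependence entirely and is worth knowing.

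One small point: since $r<\min(p,q)$ by hypothesis, both $p/r>1$ and $q/r>1$ automatically, so the case analysis you anticipate (handling $q/r\le 1$ etc.) never arises. This is in fact the reason the paper can apply the ordinary Schur lemma without further subdivision.
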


Note that the inequality $\alpha > r/q-1$ will allow us to take $\alpha \le
p/q - 1$ when $p \le q$ and $\alpha < 0$ when $q\le p$.

\begin{proof}
The proof is exactly as in \cite{luecking2}, except we need to show that
the integral operator $T$ with kernel
\begin{equation*}
    K_\alpha(z,w) = \frac { (1 - |z|^2)^{2 + \alpha}(1-|w|^2)^\alpha}
        {| 1 - \bar w z |^{4 + 2\alpha} } =
        \frac{(1-r^2)^{2+\alpha}(1-\rho^2)^\alpha}{|1-r\rho
        e^{i(\theta-t)}|^{4+2\alpha}}
\end{equation*}
(where $z=re^{i\theta}$ and $w=\rho e^{it}$) is bounded from
$L(p/r,q/r)$ to itself.

This is essentially well-known, but we include a proof for convenience.
To simplify the notation, let us show that this operator is bounded on
$L(p,q)$, when $p>1$ and $q>1$ and $\alpha > 1/q - 1$. Then the proof
will apply to $L(p/r,q/r)$ with $\alpha > r/q - 1$.

Given a function $g(w)=g(\rho e^{it})\in L(p,q)$,
\begin{equation*}
    \norm{Tg}_{L^p(d\theta)} \le \int_0^1 \norm{K_\alpha *
    g}_{L^p(d\theta)} 2\rho\,d\rho \le \int_0^1 \norm{K_\alpha}_{L^1(dt)}
    \norm{g}_{L^p(dt)} \,d\rho
\end{equation*}
in which the convolution is taken in the angle variables. We use
part~2 of Lemma~\ref{lem:inequalities} (with $M = 4+2\alpha$, which is
greater than $1$ because $\alpha > -1$) to obtain
\begin{equation*}
  \norm{K_\alpha}_{L^1(dt)} \le
    C\frac{(1-r^2)^{2+\alpha}(1-\rho^2)^\alpha} {|1-r\rho|^{3+2\alpha}}
\end{equation*}
Since $\norm{g}_{L^p(dt)}$ belongs to $L^q (2r\,dr)$, it now suffices to
show that the function in the inequality above defines a bounded
integral operator on $L^q (2r\,dr)$. This is a consequence of the Schur
method (Lemma~\ref{lem:Schur} below) and the inequalities
\begin{equation}\label{eq:Schurinequalities}
\begin{aligned}
  \int_0^1 \frac{(1-r^2)^{2+\alpha}(1-\rho^2)^{\alpha-\eps q'}}
    {|1-r\rho|^{3+2\alpha}} 2\rho\,d\rho
    & \le C (1-r^2)^{-\eps q'}\\
  \int_0^1 \frac{(1-r^2)^{2+\alpha-\eps q}(1-\rho^2)^{\alpha}}
    {|1-r\rho|^{3+2\alpha}} 2r \,dr
    & \le C (1-\rho^2)^{-\eps q}
\end{aligned}
\end{equation}
These both follow from part~3 of Lemma~\ref{lem:inequalities} if $\eps$
is chosen so that $2+2\alpha > \alpha - \eps q' > -1$ and
$2+2\alpha 2 + \alpha - \eps q > -1$. If we solve for $\eps$ and see
that we must have simultaneously
\begin{equation*}
        \frac{-2-\alpha}{q'} < \eps < \frac{1+\alpha}{q'}
        \quad\text{and}\quad
        \frac{-\alpha}{q}    < \eps < \frac{3+\alpha}{q}
\end{equation*}
Such an $\eps$ exists if we have
\begin{equation*}
    \max\left( \frac{-2-\alpha}{q'}, \frac{-\alpha}{q} \right) <
        \min\left( \frac{1+\alpha}{q'}, \frac{3+\alpha}{q} \right)
\end{equation*}
Of the four inequalities that this leads to, two are equivalent to
$\alpha > -3/2$ and the others are respectively equivalent to $\alpha >
-2 - 1/q$ and $\alpha > 1/q - 1$. All of these follow from the last, and
that was one of the assumptions.
\end{proof}

The Schur method for establishing boundedness of integral operators on
$L^p$ spaces can be found in \cite{forellirudin}. Here we use the
following form:

\begin{lemma}\label{lem:Schur}
Let $k(x,y)$ be a nonnegative measurable kernel on the product space
$(X\times Y, \mu\otimes \nu) $ where $\mu$ and $\nu$ are
$\sigma$-finite measures. Let $q > 1$ and assume there exist real constants $C_1$
and $C_2$ and positive functions $h_1(x)$ and $h_2(y)$ such that
\begin{align*}
  \int_X k(x,y)h_1(x)^{q'} \,d\mu(x)
    &\le C_1 h_2(y)^{q'}&&\text{for all } y\in Y,\\
  \int_Y k(x,y)h_2(y)^{q}  \,d\mu(x)
    &\le C_2 h_1(x)^{q} &&\text{for all } x\in X.
\end{align*}
Then the integral operator $K$ defined by $Kf(y) = \int k(x,y)f(x)
\,d\mu(x)$ is bounded from $L^q(X,\mu)$ to $L^q(Y,\nu)$ and $\norm{K}
\le C_1^{1/q'}C_2^{1/q}$.
\end{lemma}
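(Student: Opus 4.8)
The plan is to run the classical Schur test: split off the auxiliary functions $h_1$, $h_2$ from the kernel, apply H\"older's inequality in the $x$-variable, and then integrate in the $y$-variable using Tonelli's theorem. First I would reduce to the case $f\ge 0$: since $|Kf(y)|\le\int k(x,y)|f(x)|\,d\mu(x)=K|f|(y)$ and $\norm{|f|}_{L^q(\mu)}=\norm{f}_{L^q(\mu)}$, it suffices to bound $\norm{Kf}_{L^q(\nu)}$ for nonnegative $f\in L^q(X,\mu)$.

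For each fixed $y\in Y$ I would factor the integrand as $k(x,y)f(x)=\bigl(k(x,y)^{1/q'}h_1(x)\bigr)\cdot\bigl(k(x,y)^{1/q}f(x)h_1(x)^{-1}\bigr)$ and apply H\"older's inequality with exponents $q'$ and $q$. By the first hypothesis the first factor contributes at most $\bigl(C_1h_2(y)^{q'}\bigr)^{1/q'}=C_1^{1/q'}h_2(y)$, so that $Kf(y)^q\le C_1^{q/q'}h_2(y)^q\int_X k(x,y)f(x)^q h_1(x)^{-q}\,d\mu(x)$.

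Then I would integrate this inequality over $Y$ against $\nu$ and interchange the order of integration by Tonelli's theorem --- legitimate because $k$, $h_1$, $h_2$, $f$ are nonnegative and measurable and $\mu$, $\nu$ are $\sigma$-finite. The resulting inner integral $\int_Y k(x,y)h_2(y)^q\,d\nu(y)$ is bounded by $C_2h_1(x)^q$ by the second hypothesis (read with the $Y$-integration against $\nu$); the factor $h_1(x)^q$ cancels, and one is left with $\norm{Kf}_{L^q(\nu)}^q\le C_1^{q/q'}C_2\norm{f}_{L^q(\mu)}^q$. Taking $q$-th roots (and recalling $q/q'=q-1$) yields $\norm{K}\le C_1^{1/q'}C_2^{1/q}$. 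I do not expect any substantive obstacle here; the only points meriting a word of care are the reduction to $f\ge 0$, the appeal to $\sigma$-finiteness that justifies Tonelli, and making sure the second hypothesis is used in the form $\int_Y k(x,y)h_2(y)^q\,d\nu(y)\le C_2h_1(x)^q$.
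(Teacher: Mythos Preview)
Your argument is correct and is exactly the classical Schur test; the paper does not supply its own proof of this lemma but simply cites \cite{forellirudin}, so there is nothing further to compare. Your remark that the second hypothesis must be read as $\int_Y k(x,y)h_2(y)^q\,d\nu(y)\le C_2 h_1(x)^q$ (with $d\nu$ rather than the $d\mu$ appearing in the statement) is a valid correction of an evident typo.
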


The equations~\eqref{eq:Schurinequalities} establish the hypotheses of
Lemma~\ref{lem:Schur} for the functions $h(z) = k(z) = (1 -
\rho^2)^{-\eps}$, thus finishing the proof of Lemma~\ref{lower-means}.

One half of the sampling requirement for $\Gamma = \{ z_n \}$ is an upper
estimate:
\begin{equation*}
  \| (f(z_n)) \|_{l^{p,q}} \le \| f \|_{L(p,q)}, \quad f\in A(p,q).
\end{equation*}
A necessary and sufficient condition for this is that the number of
points of the sequence in $Q_{j,k}$ is bounded above independent of $j$
and $k$. We will say such a sequence has bounded density and also call
it a \emph{Carleson sequence}. If $\mu$ is sum of unit masses at each
point of $\Gamma$, then $\mu$ satisfies $\mu(Q_{j,k}) \le C$ with $C$
independent of $j$ and $k$. We will use $\C$ for the set of measures
satisfying this inequality for some constant $C$ depending on the
measure.

If $\phi$ is a M\"obius transformation of the disk, write $\mu_\phi$ for
the measure defined by $\mu_\phi(E) = \mu(\phi^{-1}(E))$. We say a
sequence of measures $\mu_n$ \emph{converges weakly} to $\mu$ if $\int h
\,d\mu_n \to \int h \,d\mu$ for all continuous $h$ with compact support
in $\ID$. Let $\W_\mu$ denote the set of all weak limits of measures of
the form $\mu_{\phi_n}$ for sequences $\{ \phi_n \}$ of M\"obius
transformations of $\ID$.

The main result of \cite{luecking2} is that a measure $\mu$ is sampling
for the weighted Bergman space $A^p_\alpha$ if and only there exists
$r < p$ such that the support of every measure in $\W_\mu$ is a set
of uniqueness for $A^{r}_\alpha$. When applied to a sum of point masses on
a sequence $\Gamma$, this condition must be equivalent to
$D^{-}(\Gamma') > (1+\alpha)/p$ for some uniformly discrete subsequence
$\Gamma'$ of $\Gamma$. In particular, if the lower uniform density of
such a $\Gamma'$ is greater than $1/q$, then $\Gamma$ is a sampling
sequence for both $A^q$ and $A^p_{p/q-1}$.

We also need the following lemma from \cite{luecking2} (Lemma 3.7).

\begin{lemma}\label{lueckinginequality}
Let $0 < r < \infty$ and $\alpha > 0$. Let $\epsilon > 0$ be given, and
define
\begin{equation*}
\U_\epsilon = \left\{ f \in A^r_\alpha : \norm{f}_{A^r_\alpha} \le 1
\text{ and } |f(0)| > \epsilon \right\}.
\end{equation*}
Assume that $\mu\in \C$ is such that the support of every measure in
$W_\mu$ is a set of uniqueness for $A^r_\alpha$. Then there is $\delta >
0$ such that $\int \left| f \right|^r (1 - |z|^2)^{\alpha + 2}
\,d\mu_\phi > \delta$ for all $\phi\in\M$ and all $f \in \U_\epsilon$.
\end{lemma}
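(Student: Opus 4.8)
The plan is to argue by contradiction, following the scheme of Lemma~3.7 in \cite{luecking2}. If no such $\delta$ exists, there are M\"obius transformations $\phi_n\in\M$ and functions $f_n\in\U_\eps$ with
\[
  I_n \defeq \int_\ID |f_n(z)|^r (1-|z|^2)^{\alpha+2}\, d\mu_{\phi_n}(z) \longrightarrow 0 .
\]
Since $\norm{f_n}_{A^r_\alpha}\le 1$, the standard pointwise bound $|g(z)|\le C\norm{g}_{A^r_\alpha}(1-|z|^2)^{-(2+\alpha)/r}$ (from subharmonicity of $|g|^r$) shows the $f_n$ are uniformly bounded on compact subsets of $\ID$; by Montel's theorem, after passing to a subsequence, $f_n\to f$ uniformly on compacta, where by Fatou $\norm{f}_{A^r_\alpha}\le1$ and $|f(0)|=\lim_n|f_n(0)|\ge\eps>0$. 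In particular $f$ is a \emph{nonzero} element of $A^r_\alpha$.

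Next I would produce a weak limit of the measures $\mu_{\phi_n}$. Because $\mu\in\C$, every $Q_{j,k}$ has $\mu$-mass at most some fixed $C$; since any pseudohyperbolic disk $E(0,\rho_0)$ meets at most $N=N(\rho_0,\beta)$ of the cells $Q_{j,k}$ (each cell and $E(0,\rho_0)$ have bounded hyperbolic diameter, the cells are disjoint with hyperbolic area bounded below), and M\"obius maps send pseudohyperbolic disks to pseudohyperbolic disks of the same radius, we get $\mu\bigl(E(w,\rho_0)\bigr)\le NC$ for \emph{every} $w\in\ID$. Hence for any compact $K\subset E(0,\rho_0)$,
\[
  \mu_{\phi_n}(K)\le\mu_{\phi_n}\bigl(E(0,\rho_0)\bigr)=\mu\bigl(E(\phi_n^{-1}(0),\rho_0)\bigr)\le NC,
\]
so $\{\mu_{\phi_n}\}$ is uniformly bounded on compacta. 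A Helly-type selection (Banach--Alaoglu applied to $C_c(\ID)^*$) yields a further subsequence with $\mu_{\phi_n}\to\nu$ weakly; by definition $\nu\in\W_\mu$, so $\supp\nu$ is a set of uniqueness for $A^r_\alpha$.

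Finally I would pass to the limit in $I_n$. Fix $\varphi\in C_c(\ID)$ with $0\le\varphi\le1$; then $\psi\defeq\varphi\,|f|^r(1-|z|^2)^{\alpha+2}\in C_c(\ID)$, so $\int\psi\,d\mu_{\phi_n}\to\int\psi\,d\nu$ by weak convergence. On the other hand, since $f_n\to f$ uniformly on $\supp\varphi$ and $\mu_{\phi_n}(\supp\varphi)$ is bounded,
\[
  \Bigl|\int\varphi\,|f_n|^r(1-|z|^2)^{\alpha+2}\,d\mu_{\phi_n}-\int\psi\,d\mu_{\phi_n}\Bigr|\longrightarrow0,
\]
while the first integral is at most $I_n\to0$. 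Hence $\int\psi\,d\nu=0$; letting $\varphi\uparrow1$ along an exhaustion of $\ID$ and using monotone convergence gives $\int_\ID|f|^r(1-|z|^2)^{\alpha+2}\,d\nu=0$. Since the weight is strictly positive and $f$ is continuous, $f$ vanishes on $\supp\nu$; as $\supp\nu$ is a uniqueness set for $A^r_\alpha$ and $f\in A^r_\alpha$, we get $f\equiv0$, contradicting $|f(0)|\ge\eps$.

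The main obstacle is the measure-theoretic compactness in the second step: one must check that the pushforwards $\mu_{\phi_n}$ are uniformly locally finite --- this is exactly where the bounded-density (Carleson) hypothesis $\mu\in\C$ together with the M\"obius-invariance of pseudohyperbolic disks is used --- so that a weak limit lying in $\W_\mu$ can be extracted, and then one must carefully interleave the locally uniform convergence $f_n\to f$ with the merely weak convergence $\mu_{\phi_n}\to\nu$, which is why the continuous cutoff $\varphi$ and the global smallness $I_n\to0$ must be used in tandem rather than a naive $\chi_K$ argument.
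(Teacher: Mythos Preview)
The paper does not give its own proof of this lemma; it simply quotes it as Lemma~3.7 of \cite{luecking2} and uses its conclusion. Your argument---contradiction, normal families to extract a nonzero limit $f$, the Carleson hypothesis to get local uniform bounds on $\mu_{\phi_n}$ and hence a weak-$*$ subsequential limit $\nu\in\W_\mu$, then interleaving locally uniform convergence of $f_n$ with weak convergence of $\mu_{\phi_n}$ via compactly supported cutoffs---is exactly the scheme of that lemma in \cite{luecking2}, and it is carried out correctly here.
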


When $\mu$ is the sum of unit point masses on $\Gamma=(z_n)$ then this
lemma says that if $0$ is one of the `good' points from
Lemma~\ref{lower-means}, that is
\begin{equation*}
  |f(0)|^r > \eps{\norm{f}_{L^r_\alpha}}
\end{equation*}
then there exists a $\delta > 0$ such that for all M\"obius
transformations $\varphi$ we have
\begin{equation*}
  \sum_{n=1}^\infty |f(\varphi(z_n))|^r (1 - |\varphi(z_n)|^2)^{\alpha +
  2} > \delta \norm {f}_{L^r_\alpha}^r > \delta |f(0)|^r
\end{equation*}

We can now prove the sufficiency of the density condition

\begin{theorem}
If there exists a uniformly discrete subsequence $\Gamma'$ of $\Gamma$
such that $D^{-}(\Gamma') > 1/q$ then $\Gamma$ is a sampling sequence
for $A^{p,q}$.
\end{theorem}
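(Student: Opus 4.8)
The plan is to prove that $\Gamma'$ itself is a sampling sequence for $A(p,q)$. Since $\Gamma'\subseteq\Gamma$ we have $\norm{(f(z_n))_{z_n\in\Gamma'}}_{l^{p,q}(\Gamma')}\le\norm{(f(z_n))_{z_n\in\Gamma}}_{l^{p,q}(\Gamma)}$ for every $f$, so the lower sampling bound for $\Gamma'$ yields the lower sampling bound for $\Gamma$, while the upper bound for $\Gamma'$ is Theorem~\ref{thm:discretebounded} (the upper bound for $\Gamma$ itself is precisely the Carleson condition, which is implicit in the statement). Thus it suffices to produce $K_1>0$ with $K_1\norm{f}_{L(p,q)}\le\norm{(f(z_n))}_{l^{p,q}(\Gamma')}$ for all $f\in A(p,q)$, where now $\Gamma'=(z_n)$ is uniformly discrete with $D^-(\Gamma')>1/q$; we may assume $f\not\equiv0$.

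First I would fix parameters. Choose $\eta>0$ with $D^-(\Gamma')>1/q+\eta$, then $r<\min(p,q)$, and then $\alpha$ with $r/q-1<\alpha\le(1/q+\eta)r-1$ and $\alpha>-1$ (such $\alpha$ exists since $\eta r>0$). With these $r,\alpha$, Lemma~\ref{lower-means} applies: for some $\epsilon>0$ independent of $f$ the good set $G_\epsilon$ satisfies $\norm{f\chi_{G_\epsilon}}_{L(p,q)}\ge\tfrac12\norm{f}_{L(p,q)}$, on $G_\epsilon$ we have $|f(z)|^r>\epsilon\int_{\ID}|f(w)|^rK_\alpha(z,w)\,dA(w)$, and everywhere $|f(z)|^r\le C\int_{\ID}|f(w)|^rK_\alpha(z,w)\,dA(w)$.

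The key step converts this into a discrete bound via Luecking's limit-set machinery. For $z\in\ID$ write $\varphi_z(w)=(z-w)/(1-\bar zw)$, an involution with $\varphi_z(0)=z$. The change of variables $w=\varphi_z(u)$, using $1-|\varphi_z(u)|^2=(1-|z|^2)(1-|u|^2)/|1-\bar zu|^2$, $|1-\overline{\varphi_z(u)}z|=(1-|z|^2)/|1-\bar zu|$, and the Jacobian $(1-|z|^2)^2/|1-\bar zu|^4$, makes every power of $(1-|z|^2)$ and of $|1-\bar zu|$ cancel and gives
\[
  \int_{\ID}|f(w)|^rK_\alpha(z,w)\,dA(w)=\norm{f\circ\varphi_z}_{A^r_\alpha}^r .
\]
Hence for $z\in G_\epsilon$ the function $g_z\defeq f\circ\varphi_z$ satisfies $|g_z(0)|^r=|f(z)|^r>\epsilon\norm{g_z}_{A^r_\alpha}^r$, so $g_z/\norm{g_z}_{A^r_\alpha}\in\U_{\epsilon^{1/r}}$. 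Let $\mu=\sum_{z_n\in\Gamma'}\delta_{z_n}$; it lies in $\C$ because $\Gamma'$ is uniformly discrete (Proposition~\ref{prop:uniformlydiscrete}), and every measure in $\W_\mu$ is supported on a Möbius limit of $\Gamma'$, which has lower uniform density at least $D^-(\Gamma')>(1+\alpha)/r$ and hence (Theorem~\ref{weightedbergmansampling}) is a sampling, so a uniqueness, set for $A^r_\alpha$. Lemma~\ref{lueckinginequality} then gives $\delta>0$ with $\int|g|^r(1-|w|^2)^{\alpha+2}\,d\mu_\phi>\delta$ for all $\phi\in\M$ and $g\in\U_{\epsilon^{1/r}}$. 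Taking $g=g_z/\norm{g_z}_{A^r_\alpha}$ and $\phi=\varphi_z$, and using $\varphi_z\circ\varphi_z=\mathrm{id}$ so $g(\varphi_z(z_n))=f(z_n)/\norm{g_z}_{A^r_\alpha}$, we obtain for every $z\in G_\epsilon$
\[
  |f(z)|^r\le C(1-|z|^2)^{\alpha+2}\sum_{z_n\in\Gamma'}
     \frac{|f(z_n)|^r(1-|z_n|^2)^{\alpha+2}}{|1-\bar zz_n|^{2\alpha+4}} .
\]

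Finally I would pass from this pointwise balayage bound to the mixed norm. Raising to the power $p/r$, integrating in $\theta$ and then in $r$ — that is, estimating $\norm{\,|f|^r\chi_{G_\epsilon}\,}_{L(p/r,q/r)}=\norm{f\chi_{G_\epsilon}}_{L(p,q)}^r$ — and using $\norm{f\chi_{G_\epsilon}}_{L(p,q)}\ge\tfrac12\norm{f}_{L(p,q)}$, reduces everything to an inequality of the form
\[
  \norm{(1-|z|^2)^{\alpha+2}\sum_n c_n|1-\bar zz_n|^{-(2\alpha+4)}}_{L(p/r,q/r)}\le C\,\norm{(f(z_n))}_{l^{p,q}(\Gamma')}^r,\qquad c_n=|f(z_n)|^r(1-|z_n|^2)^{\alpha+2}.
\]
This is proved exactly as in the sufficiency part of Theorem~\ref{interpolationthm}: apply Hölder's inequality in the angular and then the radial variable, then the estimates \eqref{ineq02} and \eqref{ineq03} together with the uniform-discreteness estimate \eqref{ineq05} for $\Gamma'$; since $r<\min(p,q)$ one has $p/r>1$ and $q/r>1$, so only the sign of $q/p-1$ must be distinguished, leaving two short cases. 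I expect this last computation to be the main grind, and the most delicate point to be the simultaneous choice of $r$ and $\alpha$ making Lemma~\ref{lower-means}, Lemma~\ref{lueckinginequality}, and the density hypothesis on $\Gamma'$ and all its Möbius limits compatible at once; the genuine idea — turning the non-local density condition into the local-looking estimate valid on $G_\epsilon$ — is carried entirely by the Möbius-invariance identity above and by Lemma~\ref{lueckinginequality}.
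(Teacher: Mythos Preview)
Your proposal is correct and follows the same route as the paper: the good-set reduction of Lemma~\ref{lower-means}, the M\"obius-invariant pointwise bound from Lemma~\ref{lueckinginequality}, and then boundedness of the resulting kernel operator from $l^{p/r,q/r}(\Gamma')$ into $L(p/r,q/r)$. The only cosmetic differences are that the paper fixes $\alpha=p/q-1$ (for $p<q$) or $\alpha=0$ (for $q<p$) rather than your flexible choice, and handles the final operator bound by the Schur method already used in Lemma~\ref{lower-means} rather than by the direct H\"older estimates you propose.
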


\begin{proof}
The case $p=q$ is the known result for $A^q$ and so we divide the proof
into two cases, $p<q$ and $p>q$. We start with the first. Throughout the
proof, let $\alpha = p/q-1$ so that $\alpha>-1$ and $(1+\alpha)/p =
1/q$. Also let $\mu = \sum_{z_n\in \Gamma} \delta_{z_n}$, the sum of
point masses for points in $\Gamma$.

Thus $\Gamma$ is a sampling sequence for $A^p_\alpha$ and so the measure
$\mu$ satisfies the conditions of \cite{luecking2}. In particular,
Lemma~\ref{lueckinginequality} holds for this $\mu$. We have seen that
this implies if $0$ belongs to the good set $G_\eps$ from Lemma~\ref{lower-means}
\begin{equation*}
   |f(0)|^r \le C \int_\ID |f(z)|^r (1-|z|^2)2+\alpha \,d\mu_\phi
\end{equation*}
for some $r < p$ and for for all M\"obius transformations $\phi$. As in
\cite{luecking2} we can consider compositions of $f$ with M\"obius
transformations and conclude that if $\zeta\in G_\eps$
\begin{equation*}
  |f(\zeta)|^r \le C \int \left| f(z) \right|^r \left| \phi_\zeta'(z) \right|^{2 +
    \alpha}(1 - |z|^2)^{2 + \alpha} \,d\mu(z)
\end{equation*}
Expanding the expression $\left| \phi_\zeta'(z) \right|$, this gives
\begin{equation*}
  |f(\zeta)|^r \le C \int | f(z) |^r
    \frac{(1-|\zeta|^2)^{2+\alpha}(1 - |z|^2)^{2 + \alpha}}
    {|1-\bar\zeta z|^{4+2\alpha}} \,d\mu(z)
\end{equation*}

Let us use $\K$ to represent the operator of integration against
\begin{equation*}
    K(\zeta,z) = \frac{(1-|\zeta|^2)^{2+\alpha}(1 - |z|^2)^{2 + \alpha}}
    {|1-\bar\zeta z|^{4+2\alpha}} \,d\mu(z)
\end{equation*}
We claim that $\K$ maps $l^{p/r,q/r}(\Gamma)$ boundedly into
$L^{p,q}$, viewing a sequence in $l^{p/r,q/r}(\Gamma)$ as a function
$f(z)$ on the measure space $(\Gamma, \mu)$. Then we will have
\begin{align*}
 \norm{f}_{A^{p,q}}
    &= \norm{|f|^r}_{L^{p/r,q/r}}\\
    &\le 2 \norm{|f|^r\chi_{G_\eps}}_{L^{p/r,q/r}}\\
    &\le 2C \K((|f(z_m)|^r))                         \\
    &\le 2C\norm{\K}\norm{(|f(z_m)|^r)}_{l^{p/r,q/r}}  \\
    &=   2C\norm{\K}\norm{(f(z_m))}_{l^{p,q}}
\end{align*}
Thus we will have obtained the sampling inequality and completed the
proof of sufficiency.

The proof that $\K$ is bounded is almost identical to the proof of the
boundedness of a similarly defined operator in Lemma~\ref{lower-means}.
The difference between the two operators is that the latter involves
integration against $dA(z)$ whereas the former replaces this with the
measure $(1-|z|^2)^2\,d\mu(z)$. However, because the kernel satisfies
$K(\zeta,z) < CK(\zeta,w)$ for pairs $z,w\in Q_{j,k}$, with $C$
independent of $\zeta$, $(j, k)$, Integration with respect to
$(1-|z|^2)^2\,d\mu(z)$ can be estimated in terms of integration with
respect to $dA(z)$ because $(1-|z|^2)^2d\mu(z)$ is a Carleson measure.
Details are omitted.

The case $q < p$ proceeds similarly and is even simpler because we can
take $\alpha = 0$. What is important is the the exponent $r$ must be
chosen to make both $p/r$ and $q/r$ greater than 1.
\end{proof}

Turning to the necessity we have the following.

\begin{theorem}
If $\Gamma$ is a sampling sequence for $A^{p,q}$ then it contains a
uniformly discrete subsequence $\Gamma'$ satisfying $D^{-}(\Gamma') > 1/q$
\end{theorem}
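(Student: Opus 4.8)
The plan is to reduce the necessity to the known Bergman-space fact (Theorem~\ref{weightedbergmansampling}) by passing through the weak-limit description of sampling from \cite{luecking2}; the sufficiency theorem already proved supplies the converse, so the two together say that $\Gamma$ is sampling for $A(p,q)$ exactly when it is sampling for $A^q$ (if $q<p$) or for $A^p_{p/q-1}$ (if $p<q$). To begin, the right-hand inequality in \eqref{samplinginequalities} forces the number of points of $\Gamma$ in each $Q_{j,k}$ to be bounded, i.e.\ $\Gamma$ is a Carleson sequence, so $\mu=\sum_{z_n\in\Gamma}\delta_{z_n}$ lies in $\C$; moreover the same holds for every M\"obius pushforward $\mu_\phi$ with a uniform constant, so every weak limit $\nu\in\W_\mu$ is again Carleson.

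The core step is to show that if $\Gamma$ is sampling for $A(p,q)$ then, for a fixed $r<\min(p,q)$ and a fixed $\alpha$ admissible in Lemma~\ref{lower-means} (take $\alpha=0$ when $q<p$ and $\alpha=p/q-1$ when $p<q$; both obey $\alpha>r/q-1$), the support of every $\nu\in\W_\mu$ is a set of uniqueness for $A^r_\alpha$. Argue by contradiction: suppose some $\nu=\lim\mu_{\phi_n}$ carried a nonzero $g\in A^r_\alpha$ vanishing on $\supp\nu$, normalized with $\norm{g}_{A^r_\alpha}=1$. Using the uniform Carleson bounds for $\mu$ and the $\mu_{\phi_n}$ to control the boundary tails, one gets $\int_\ID|g|^r(1-|w|^2)^{\alpha+2}\,d\mu_{\phi_n}\to\int_\ID|g|^r(1-|w|^2)^{\alpha+2}\,d\nu=0$, and the goal is to convert this into a sequence of unit-norm functions in $A(p,q)$ whose sampled $l^{p,q}$-norms tend to $0$, contradicting the left-hand inequality in \eqref{samplinginequalities}. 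Here is exactly where the lack of conformal invariance of $A(p,q)$ is felt: one cannot simply transplant $g$ by the maps $\phi_n$ and estimate by a change of variables, as in the Bergman case. Instead one runs the mechanism of the sufficiency proof in reverse, using the good-set decomposition, the Berezin-type estimate \eqref{Berezin}, and the boundedness of the integral operator with kernel $K_\alpha$ established in the proof of Lemma~\ref{lower-means} --- all of which are conformally robust --- to build the offending $A(p,q)$ functions out of $g$ and the $\phi_n$. I expect this construction to be the main obstacle.

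Granting this, the condition just obtained is precisely the one \cite{luecking2} proves equivalent (together with $\mu\in\C$) to $\mu$ being a sampling measure for $A^q$ when $q<p$, and for $A^p_{p/q-1}$ when $p<q$. Hence $\Gamma$ is a sampling sequence for $A^q$, resp.\ $A^p_{p/q-1}$, and Theorem~\ref{weightedbergmansampling} then furnishes, with no further work, a uniformly discrete subsequence $\Gamma'$ with $D^-(\Gamma')>1/q$ in the first case and with $D^-(\Gamma')>(1+\alpha)/p=1/q$ (where $\alpha=p/q-1$) in the second. This is the asserted conclusion.
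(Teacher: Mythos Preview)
Your plan diverges from the paper's at the crucial step, and the divergence is not cosmetic. You propose to show directly that the support of every $\nu\in\W_\mu$ is a set of uniqueness for $A^r_\alpha$ by taking a putative $g\in A^r_\alpha$ vanishing on $\supp\nu$ and manufacturing from it, via $g$ and the $\phi_n$, a sequence of functions in $A(p,q)$ with bounded norm but vanishing sampled $l^{p,q}$-norm. You correctly identify this as the obstacle, but the tools you list (the good-set decomposition, \eqref{Berezin}, the boundedness of $K_\alpha$) all go in the wrong direction: in the sufficiency proof they are used to pass \emph{from} the uniqueness condition \emph{to} the sampling inequality, not the reverse. To go your way one would need something like $f_n = (g\circ\phi_n)(\phi_n')^\gamma$ with controlled $A(p,q)$ norm, and it is exactly the lack of M\"obius invariance of $A(p,q)$ that blocks this. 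Nothing in Lemma~\ref{lower-means} or the $K_\alpha$ estimate supplies a substitute.

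The paper avoids this difficulty by a genuinely different route. Instead of aiming at uniqueness for $A^r_\alpha$, it first translates the density condition (via an inclusion lemma between $A^p_\alpha$, $A^q$ and $A(p,q)$) into the equivalent statement that there exists $\lambda<1$ such that every weak-limit support is a set of uniqueness for the \emph{mixed-norm} space $A^{\lambda p,\lambda q}$. The contrapositive then reads: if for every $\lambda<1$ some $\nu\in\W_\mu$ has support a zero set for $A^{\lambda p,\lambda q}$, then $\Gamma$ is not sampling for $A(p,q)$. To prove this last implication the paper invokes two ingredients you do not have: a characterization of zero sets of $A(p,q)$ via the function $k_\Z$ from \cite{Lue96} (valid for mixed-norm spaces, not just Bergman spaces), and a perturbation lemma (the analogue of \cite[Theorem~3.9]{luecking2}) saying that a suitable radial push of a Carleson zero set of $A^{p,q}$ is a zero set of $A^{p/\gamma,q/\gamma}$. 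With these in hand the construction of the functions violating the sampling inequality proceeds exactly as in \cite[Theorem~5.1, (a)$\Rightarrow$(b)]{luecking2}, but now the functions live in $A^{\lambda p,\lambda q}\subset A(p,q)$ from the outset, so no M\"obius change of variable in $A(p,q)$ is ever needed. Your argument, as it stands, is missing a mechanism of this kind.
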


I will simply sketch the proof. We need the following variant of
Theorem~\ref{thm:stability}:

\begin{lemma}\label{lem:stability}
For $0<p,q<\infty$, let $\Gamma = (u_m)$ be a sampling sequence
for $A(p,q)$ and $(u'_m)$ be another sequence in $\ID $. There
exists $\delta > 0$ such that if
\begin{equation*}
  \rho(u_m,u'_m) < \delta \quad \text{for all } m
\end{equation*}
then $\Gamma' = (u'_m)$ is also a sampling sequence for $A(p,q)$.
\end{lemma}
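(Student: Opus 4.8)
The plan is to transcribe the proof of Theorem~\ref{thm:stability}, replacing the bookkeeping about solving interpolation problems with direct control of the two inequalities in \eqref{samplinginequalities}. Being sampling for $A(p,q)$ means there are constants $K_1,K_2>0$ with $K_1\norm{f}_{A(p,q)}\le\norm{(f(u_m))}_{l^{p,q}(\Gamma)}\le K_2\norm{f}_{A(p,q)}$ for all $f$; in particular $\Gamma$ has bounded density (is a Carleson sequence) and so is a union of finitely many, say $N$, uniformly discrete subsequences. Fix a radius $r\in(0,1)$ so that \eqref{ine:thm:sta} is available, and require $\delta<r/2$ together with the smallness hypotheses of Lemma~\ref{lem:sta}; then each $u'_m$ lies in the union of the three annuli adjacent to the one containing $u_m$, and Lemma~\ref{lem:sta}, applied in both directions via the symmetry of $\rho$, shows that $\norm{\cdot}_{l^{p,q}(\Gamma)}$ and $\norm{\cdot}_{l^{p,q}(\Gamma')}$ are comparable on any fixed scalar sequence. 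It therefore suffices to bound $\norm{(f(u'_m))}_{l^{p,q}(\Gamma)}$ above and below by multiples of $\norm{f}_{A(p,q)}$, with constants independent of $f$.

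For the upper bound, observe that a hyperbolically small perturbation of a Carleson sequence is again a Carleson sequence, since each point moves only into adjacent cells $Q_{j,k}$; hence $\Gamma'$ has bounded density and the evaluation operator $f\mapsto(f(u'_m))$ is bounded from $A(p,q)$ into $l^{p,q}(\Gamma')$ exactly as in Theorem~\ref{thm:discretebounded}. For the lower bound I would use $|f(u_m)|\le|f(u'_m)|+|f(u_m)-f(u'_m)|$ together with the quasi-triangle inequality in $l^{p,q}$ (the $(a+b+c)^\alpha$-type estimate recorded in the proof of Lemma~\ref{lem:sta}) to obtain
\[
  \norm{(f(u_m))}_{l^{p,q}(\Gamma)}
    \le C\Bigl(\norm{(f(u'_m))}_{l^{p,q}(\Gamma)}
      +\norm{(f(u_m)-f(u'_m))}_{l^{p,q}(\Gamma)}\Bigr).
\]
Applying \eqref{ine:thm:sta} gives $|f(u_{j,k})-f(u'_{j,k})|^p\le C\delta^p\int_{E(u_{j,k},r)}|f(\zeta)|^p(1-|\zeta|^2)^{-2}\,dA(\zeta)$, and then the same chain of estimates as in the proof of Theorem~\ref{thm:stability} — replacing the disks $E(u_{j,k},r)$ by the finitely many neighbouring rectangles $Q_{j',k'}$ and recognising the discrete norm \eqref{eqn:discretenorm} — produces $\norm{(f(u_m)-f(u'_m))}_{l^{p,q}(\Gamma)}\le C_0\delta\norm{f}_{A(p,q)}$, where $C_0$ depends on $p,q,\beta$ and the Carleson constant $N$ of $\Gamma$. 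Feeding this and the sampling lower bound for $\Gamma$ into the display yields $K_1\norm{f}_{A(p,q)}\le C\norm{(f(u'_m))}_{l^{p,q}(\Gamma)}+CC_0\delta\norm{f}_{A(p,q)}$, so choosing $\delta$ small enough that $CC_0\delta\le K_1/2$ gives a lower sampling bound for $(f(u'_m))$, and Lemma~\ref{lem:sta} transfers it to $\norm{\cdot}_{l^{p,q}(\Gamma')}$.

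The only real difference from Theorem~\ref{thm:stability}, and the point needing the most care, is that a sampling sequence need not be uniformly discrete: in Theorem~\ref{thm:stability} one shrank $\delta$ so that the disks $E(u_m,r)$ were pairwise disjoint, whereas here we can only guarantee bounded overlap, obtained by decomposing $\Gamma$ into its $N$ uniformly discrete pieces and summing the per-piece estimates. All constants then acquire a dependence on $N$, but $N$ is fixed once $\Gamma$ is given, so this does not affect the argument; everything else is a line-by-line repetition of the computations already carried out for Theorem~\ref{thm:stability} and Theorem~\ref{thm:discretebounded}.
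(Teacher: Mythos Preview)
Your proposal is correct and matches the paper's own treatment: the paper omits the proof, saying only that it is ``very much the same as that for Theorem~\ref{thm:stability}'' with the crucial observation that one needs the sequence to be Carleson rather than uniformly discrete. You have identified precisely this point and explained how to handle it (bounded overlap via decomposition into $N$ uniformly discrete pieces), so there is nothing to add.
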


The proof is omitted. It is very much the same as that for
Theorem~\ref{thm:stability}. A crucial point of that proof is that the
sequence $\Gamma$ is uniformly discrete. That is stronger than necessary
for this lemma: all that is needed is that the sequence be Carleson.

As a first step toward the proof of necessity, we need the following
mixed-norm version of Theorem~3.9 of \cite{luecking2}.

\begin{lemma}\label{perturbation}
Let $\Z$ be a zero sequence for $A^{p,q}$ and assume that
$\Z$ is a Carleson sequence. Let $0 < \gamma < 1$ and suppose there
exists another set $\Z'$ and a one-to-one correspondance $\sigma\colon \Z
\to \Z'$ such that $1 - |\sigma(a)|^2 = \gamma (1 - |a|^2)$ and
$\rho(a,\sigma(a))$ is bounded on $\Z$. Then $\Z'$ is a zero set for
$A^{p/\gamma, q/\gamma}$.
\end{lemma}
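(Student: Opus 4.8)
The plan is to construct, starting from a nonzero $f\in A^{p,q}$ vanishing on $\Z$, a nonzero $g\in A^{p/\gamma,q/\gamma}$ vanishing on $\Z'$; the argument follows the proof of Theorem~3.9 of \cite{luecking2}, and what makes the passage from the Bergman spaces to the mixed-norm spaces essentially formal is that the ratio of the outer exponent to the inner exponent is unchanged under the scaling, $(q/\gamma)/(p/\gamma)=q/p$. In particular, for every measurable $h$ one has the homogeneity $\triplenorm{|h|^{\gamma}}_{A^{p/\gamma,q/\gamma}}=\triplenorm{h}_{A^{p,q}}^{\gamma}$, and each H\"older or Minkowski step performed in the inner ($d\theta$) and outer ($2r\,dr$) integrals of the single-exponent proof has a verbatim analogue here. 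Throughout I would work with the discrete norm $\triplenorm{\cdot}$ of Section~\ref{sec:discrete}, which localizes all estimates to the polar rectangles $Q_{n,k}$.

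First I would observe that, $\Z=(a_k)$ being a Carleson sequence, $\sum_k(1-|a_k|^2)^2<\infty$: the annulus $A_j$ carries $O(L^j)$ points of $\Z$, each with $1-|a_k|^2\asymp L^{-j}$, so the sum is dominated by $\sum_j L^{-j}$; the same holds for $\Z'=(\sigma(a_k))$ since $1-|\sigma(a_k)|^2=\gamma(1-|a_k|^2)$ and a bounded pseudohyperbolic perturbation of a Carleson sequence is Carleson. Consequently the Horowitz-type products $\Phi_\Z$ and $\Phi_{\Z'}$ (the analytic functions with precisely these zero sets, formed with the usual convergence factors) converge locally uniformly and are nonzero at $0$. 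After reducing, as in \cite{luecking2}, to the case in which $\Z$ is exactly the zero set of $f$ and $f(0)=1$, the function $f/\Phi_\Z$ is zero-free on the simply connected disk, so $(f/\Phi_\Z)^{\gamma}$ is well defined and single-valued, and I would set
\[
  g\defeq\Phi_{\Z'}\cdot\bigl(f/\Phi_\Z\bigr)^{\gamma},
\]
an analytic function, nonvanishing at $0$, whose zero set is exactly $\Z'$.

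It then remains to prove $\triplenorm{g}_{A^{p/\gamma,q/\gamma}}\le C\triplenorm{f}_{A^{p,q}}^{\gamma}$; combined with the homogeneity above this gives $g\in A^{p/\gamma,q/\gamma}$, hence that $\Z'$ is a zero set for that space. Writing $|g|=\bigl|\Phi_{\Z'}/\Phi_\Z^{\gamma}\bigr|\,|f|^{\gamma}$, one splits the disk into the union $B=\bigcup_k\clos{E}(a_k,1/2)$ of \emph{bad} disks about the zeros of $f$ and its complement. On $\ID\setminus B$ the multiplier $\Phi_{\Z'}/\Phi_\Z^{\gamma}$ is controlled — this is the delicate step of \cite{luecking2}, in which the precise identity $1-|\sigma(a_k)|^2=\gamma(1-|a_k|^2)$, the comparability $|1-\overline{\sigma(a_k)}z|\asymp|1-\bar a_k z|$ furnished by the boundedness of $\rho(a_k,\sigma(a_k))$, and the choice of the Horowitz convergence factors are all used — so that $\triplenorm{g\chi_{\ID\setminus B}}_{A^{p/\gamma,q/\gamma}}\le C\triplenorm{|f|^{\gamma}}_{A^{p/\gamma,q/\gamma}}=C\triplenorm{f}_{A^{p,q}}^{\gamma}$. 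On $B$, since $\{a_k\}$ is Carleson each disk $\clos{E}(a_k,1/2)$ meets only boundedly many $Q_{n,k}$, and a crude bound for $|g|$ there (from the harmonicity of $\log|f/\Phi_\Z|$ and the standard growth estimate for $\Phi_{\Z'}$ near its zeros) is absorbed into a bounded number of neighbouring annular terms of $\triplenorm{g}_{A^{p/\gamma,q/\gamma}}^{q/\gamma}$, exactly as in the proof of Theorem~\ref{thm:discretebounded}. Adding the two contributions completes the argument.

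The step I expect to be the main obstacle is the control of $\Phi_{\Z'}/\Phi_\Z^{\gamma}$ on $\ID\setminus B$: this is the heart of Theorem~3.9 of \cite{luecking2}, and the only new point is that nothing in it is disturbed on replacing the single integral $\int_{\ID}|\cdot|^{p}\,dA$ by the iterated mixed-norm integral — which holds precisely because the exponent ratio $(q/\gamma)/(p/\gamma)=q/p$ is unchanged and the discrete norm $\triplenorm{\cdot}$ confines the computation to the $Q_{n,k}$.
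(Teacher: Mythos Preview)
Your overall strategy---produce from a nonzero $f\in A^{p,q}$ vanishing on $\Z$ a nonzero $g\in A^{p/\gamma,q/\gamma}$ vanishing on $\Z'$, via a $\gamma$-power construction---is correct and matches the paper's. You are also right that the passage to mixed norms is essentially formal because the exponent ratio $q/p$ is preserved under the scaling; this is exactly why the paper can say the proof is ``almost identical'' to Theorem~3.9 of \cite{luecking2}.

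Where you diverge from the paper is in the technical vehicle. The paper does not use Horowitz products. It invokes instead the characterization of zero sets from \cite{Lue96}: $\Z$ is a zero sequence for $A^{p,q}$ if and only if there is a harmonic $h$ with $\exp[k_\Z-h]\in L^{p,q}$. This is precisely the tool that Theorem~3.9 of \cite{luecking2} uses (via Theorem~3.8 there), and the argument proceeds by taking $h'=\gamma h$ and showing that $k_{\Z'}-\gamma k_\Z$ is bounded above, so that $\exp[k_{\Z'}-h']\le C\bigl(\exp[k_\Z-h]\bigr)^\gamma$, after which the homogeneity you noted gives membership in $L^{p/\gamma,q/\gamma}$. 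Because $k_\Z$ is real-valued and carries no convergence factors, the comparison $k_{\Z'}\le\gamma k_\Z+C$ is a clean pointwise estimate built directly from $1-|\sigma(a)|^2=\gamma(1-|a|^2)$ and $|1-\overline{\sigma(a)}z|\asymp|1-\bar a z|$.

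Your Horowitz-product route $g=\Phi_{\Z'}\,(f/\Phi_\Z)^\gamma$ is plausible, but there is a gap: you cannot cite \cite{luecking2} for the multiplier bound $|\Phi_{\Z'}/\Phi_\Z^{\gamma}|\le C$ on $\ID\setminus B$, because that paper proves the $k_\Z$ inequality, not this one. The Horowitz convergence factors do not scale by $\gamma$ under $\Z\mapsto\Z'$, so this bound would require its own argument; and your treatment of the bad set $B$ (where $\Phi_\Z$ vanishes but $\Phi_{\Z'}$ need not, since $\rho(a,\sigma(a))$ is only bounded, not small) is too sketchy to stand as written. The $k_\Z$ formulation that the paper uses sidesteps both issues: there are no convergence factors to track, and the characterization is an $L^{p,q}$ condition rather than a pointwise one, so no separate bad-set argument is needed.
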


The proof of this is almost identical to that of Theorem~3.9 of
\cite{luecking2}. Instead of Theorem~3.8 of \cite{luecking2}, taken from
\cite{Lue96}, we can make use of this similar characterization of zero
sets of the mixed-norm space, also from \cite{Lue96}:

\begin{lemma}
$\Z$ is a zero sequence for $A^{p,q}$
if and only if there exists a harmonic function $h$ in $\ID$ such that
the function $\exp \left[ k_\Z(\zeta) - h(z) \right]$ belongs to
$L^{p,q}$.
\end{lemma}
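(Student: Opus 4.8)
The plan is to follow the Bergman-space argument of \cite{Lue96}, substituting the mixed-norm estimates collected in Lemma~\ref{lem:inequalities} and Section~\ref{sec:discrete} for their single-index counterparts. Recall that $k_\Z$ is the logarithm of the modulus of the canonical Weierstrass-type product with zero sequence exactly $\Z=(z_j)$: writing $k_\Z(z)=\sum_j\bigl(\log\rho(z,z_j)+c_j(z)\bigr)$, the harmonic polynomials $c_j$ are chosen (as by Korenblum) so that the series converges locally uniformly on $\ID\setminus\Z$, so that $k_\Z$ is subharmonic on $\ID$ with Riesz measure $\sum_j\delta_{z_j}$ (unit point masses), and so that $k_\Z$ is determined only up to an additive harmonic function -- which accounts for the harmonic $h$ in the statement. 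The existence of $k_\Z$ for an arbitrary sequence and its basic local estimates are given in \cite{Lue96}; nothing about this construction changes in the mixed-norm setting.

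For the ``if'' direction, suppose $h$ is harmonic on $\ID$ and $e^{k_\Z-h}\in L^{p,q}$. The function $v:=k_\Z-h$ is subharmonic, harmonic on $\ID\setminus\Z$, and near each $z_j$ it differs from $\log|z-z_j|$ by a harmonic function; hence locally $v=\Re F$ for an analytic $F$, and the monodromy of $F$ around any loop in $\ID$ equals $2\pi i$ times the number of points of $\Z$ it encloses, an integer multiple of $2\pi i$. Since $\ID$ is simply connected, $f:=\exp F$ is therefore single-valued and analytic on $\ID$, with $|f|=e^{v}=e^{k_\Z-h}$ and zero sequence exactly $\Z$. Then $\norm{f}_{L^{p,q}}=\norm{e^{k_\Z-h}}_{L^{p,q}}<\infty$, so $f\in A^{p,q}$ and $\Z$ is a zero sequence for $A^{p,q}$.

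For the ``only if'' direction, let $f\in A^{p,q}$, $f\not\equiv0$, vanish on $\Z$, and set $u=\log|f|$. If $f$ vanishes \emph{exactly} on $\Z$, then $u$ and $k_\Z$ have the same Riesz measure, so $h:=k_\Z-u$ extends to a harmonic function on $\ID$ and $e^{k_\Z-h}=|f|\in L^{p,q}$, as desired. The substance of the lemma is thus the case in which $f$ carries additional zeros $W:=Z(f)\setminus\Z$, and the required output is an analytic function vanishing exactly on $\Z$ that still lies in $A^{p,q}$; equivalently, one must show that a zero sequence for $A^{p,q}$ is always attained by a function with no superfluous zeros. This is the main obstacle, and it is genuinely delicate because $W$ need not satisfy the Blaschke condition, so $f$ cannot be divided by a Blaschke product over $W$, and naive removal of the zeros of $W$ enlarges the modulus. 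I would handle it as in \cite{Lue96}: compare $k_\Z$ with the Green potential $\int\log\rho(\cdot,w)\,d\mu_f(w)$ of the full zero divisor $\mu_f$ of $f$, use the Riesz decomposition to isolate the contribution of $W$ as a nonnegative Green potential, and absorb that contribution into a fixed multiplicative constant by means of the ideal (lattice) property $|g_1|\le|g_2|\Rightarrow\norm{g_1}_{L^{p,q}}\le\norm{g_2}_{L^{p,q}}$ together with the annular norm equivalence \eqref{eqn:discretenorm}; the one-variable and radial integral estimates this requires are exactly those furnished by Lemma~\ref{lem:inequalities}. Every ingredient is already present in the Bergman-space proof of \cite{Lue96}, and the mixed norm introduces no new phenomenon beyond the ones dispatched there -- which is why the statement is attributed to \cite{Lue96}. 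Making the ``$W$-absorption'' step quantitative in the mixed norm is the point at which I expect the bulk of the work to lie.
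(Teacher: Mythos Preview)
The paper gives no proof of this lemma at all: it simply states the result and attributes it to \cite{Lue96}, remarking only that the definition of $k_\Z$ can be found there or in \cite{luecking2}. Your proposal to follow the argument of \cite{Lue96} is therefore exactly in line with what the paper does, and your sketch of the two directions is accurate as far as it goes.

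One comment on scope: your ``only if'' discussion devotes most of its energy to the case where $f$ has zeros beyond $\Z$, and you flag the $W$-absorption step as the crux. But under the usual convention (and the one in \cite{Lue96}) ``$\Z$ is a zero sequence for $A^{p,q}$'' means $\Z$ is \emph{exactly} the zero set of some nonzero $f\in A^{p,q}$, in which case $h=k_\Z-\log|f|$ is harmonic and the direction is immediate, as you yourself note. The question of whether a subset of a zero set is again a zero set is a separate (Horowitz-type) result, not part of the lemma as stated. Also, a small sign slip: the Green-type potential $\sum_{w\in W}\log\rho(\cdot,w)$ is nonpositive, not nonnegative, so the lattice inequality runs the wrong way for the absorption you describe; the actual mechanism in \cite{Lue96} is more delicate than a pointwise comparison. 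None of this conflicts with the paper, which simply defers the entire matter to \cite{Lue96}.
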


See either \cite{luecking2} or \cite{Lue96} for the definition of
$k_\Z$.

The next thing we need is the appropriate relationship between mixed
norm spaces and related Bergman spaces.

\begin{lemma}
If $p<q$ let $\alpha = p/q-1$. Then $A^p_\alpha \subset A^{p,q}$ and for
all positive $\lambda < 1$, $A^{p,q}\subset A^{\lambda p}_\alpha$.

If $q < p$ then $A^{p,q} \subset A^q$ and for all positive $\lambda <
1$, $A^q \subset A^{\lambda p,\lambda q}$.
\end{lemma}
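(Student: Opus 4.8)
Every inclusion here is an inequality between the integral means $M_s(r,f)$ of $f$, and two elementary facts do most of the work. (a) For a fixed radius $r$, Jensen's inequality on the probability measure $d\theta/2\pi$ gives $M_s(r,f)\le M_t(r,f)$ whenever $0<s\le t$; this lets one lower the \emph{inner} exponent freely. (b) Since $|f|^p$ is subharmonic, $\phi(r):=M_p(r,f)^p$ is nondecreasing in $r$; integrating $\phi(\rho)\ge\phi(r)$ against $(1-\rho^2)^\alpha\,2\rho\,d\rho$ over $\rho\in(r,(1+r)/2)$ and using $\alpha+1>0$ yields the pointwise bound
\begin{equation*}
  \phi(r)\le C\,(1-r^2)^{-(\alpha+1)}\,\norm{f}_{A^p_\alpha}^p .
\end{equation*}
Put $\beta=q/p$; note $\norm{f}_{A^p_\alpha}^p=\int_0^1\phi(r)(1-r^2)^\alpha\,2r\,dr$ and $\norm{f}_{A^{p,q}}^q=\int_0^1\phi(r)^\beta\,2r\,dr$, both against the probability measure $2r\,dr$ on $(0,1)$.

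Assume first $p<q$, so $\alpha=p/q-1\in(-1,0)$, $\alpha+1=1/\beta$ and $\beta>1$. For $A^p_\alpha\subset A^{p,q}$, write $\phi^\beta=\phi^{\beta-1}\cdot\phi$, bound $\phi^{\beta-1}$ by the pointwise estimate raised to the power $\beta-1>0$, and use $(\alpha+1)(\beta-1)=1-1/\beta=-\alpha$ to get $\phi(r)^\beta\le C(1-r^2)^\alpha\phi(r)\,\norm{f}_{A^p_\alpha}^{p(\beta-1)}$; integrating in $2r\,dr$ and recalling $p\beta=q$ gives $\norm{f}_{A^{p,q}}\le C\norm{f}_{A^p_\alpha}$. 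For $A^{p,q}\subset A^{\lambda p}_\alpha$ with $\lambda<1$, fact (a) gives $M_{\lambda p}(r,f)^{\lambda p}\le\phi(r)^\lambda$, hence $\norm{f}_{A^{\lambda p}_\alpha}^{\lambda p}\le\int_0^1\phi(r)^\lambda(1-r^2)^\alpha\,2r\,dr$; now apply H\"older with exponents $\beta/\lambda$ and $\beta/(\beta-\lambda)$ (legitimate since $\lambda<1<\beta$). The $\phi$-factor becomes $\bigl(\int_0^1\phi^\beta\,2r\,dr\bigr)^{\lambda/\beta}=\norm{f}_{A^{p,q}}^{\lambda p}$, and the weight factor $\int_0^1(1-r^2)^{\alpha\beta/(\beta-\lambda)}\,2r\,dr$ is finite precisely because $\alpha\beta/(\beta-\lambda)=(1-\beta)/(\beta-\lambda)>-1$ is equivalent to $\lambda<1$.

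Now assume $q<p$. Then $A^{p,q}\subset A^q$ is immediate: by (a), $M_q(r,f)\le M_p(r,f)$, so $\norm{f}_{A^q}^q=\int_0^1 M_q(r,f)^q\,2r\,dr\le\int_0^1 M_p(r,f)^q\,2r\,dr=\norm{f}_{A^{p,q}}^q$. The inclusion $A^q\subset A^{\lambda p,\lambda q}$ ($\lambda<1$) is the main obstacle, because $\lambda p$ may now exceed $q$, so that (a) no longer compares $M_{\lambda p}$ with $M_q$. When $\lambda p\le q$ it is still easy: $M_{\lambda p}(r,f)\le M_q(r,f)$, whence $\norm{f}_{A^{\lambda p,\lambda q}}^{\lambda q}\le\int_0^1 M_q(r,f)^{\lambda q}\,2r\,dr\le\bigl(\int_0^1 M_q(r,f)^q\,2r\,dr\bigr)^{\lambda}=\norm{f}_{A^q}^{\lambda q}$ by H\"older on $2r\,dr$. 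For $\lambda p>q$ analyticity must be used, and the plan is: for $|z|=r$ and $R\asymp 1-r$ with $D(z,R)\subset\ID$, apply the sub-mean-value inequality to the subharmonic function $|f|^q$ in the form $|f(z)|^{\lambda p}=\bigl(\sup_{D(z,R/2)}|f|^q\bigr)^{\lambda p/q}\le\bigl(CR^{-2}\int_{D(z,R)}|f|^q\,dA\bigr)^{\lambda p/q}$, integrate in $\theta$, write $t^{\lambda p/q}=t^{\lambda p/q-1}t$ with $t=\int_{D(z,R)}|f|^q\,dA$ dominated by $G(r):=\int_{\{r-R\le|w|\le r+R\}}|f(w)|^q\,dA(w)$, and use Fubini, arriving at a bound of the shape
\begin{equation*}
  M_{\lambda p}(r,f)^{\lambda p}\le C\,(1-r)^{-(2\lambda p/q-1)}\,\norm{f}_{A^q}^{\lambda p-q}\,G(r).
\end{equation*}
Raising to the power $q/(\lambda p)$, integrating in $2r\,dr$, and a final H\"older/Fubini step, using $\int_0^1 G(r)\,dr\le C\norm{f}_{A^q}^q$, then close the estimate. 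The delicate point, and the step I would verify most carefully, is the exponent bookkeeping at the end of this last argument.
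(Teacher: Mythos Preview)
The paper states this lemma without proof, so there is no argument in the paper to compare yours against. Your treatment of the first three inclusions is correct and efficient: the pointwise bound $\phi(r)\le C(1-r^2)^{-(\alpha+1)}\|f\|_{A^p_\alpha}^p$ combined with the splitting $\phi^\beta=\phi^{\beta-1}\phi$ handles $A^p_\alpha\subset A^{p,q}$ cleanly, the H\"older argument for $A^{p,q}\subset A^{\lambda p}_\alpha$ is sharp (the convergence of the weight integral is exactly equivalent to $\lambda<1$), and $A^{p,q}\subset A^q$ for $q<p$ is immediate from $M_q\le M_p$.

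The difficulty you flag in the last case is not just delicate bookkeeping: the stated inclusion $A^q\subset A^{\lambda p,\lambda q}$ for \emph{all} $\lambda<1$ is false when $q<p$. Take $f(z)=(1-z)^{-a}$. Using the standard estimate $M_s(r,f)^s\asymp(1-r)^{1-sa}$ for $sa>1$, one finds $f\in A^q$ iff $a<2/q$, while $f\in A^{\lambda p,\lambda q}$ iff $a<\bigl(\tfrac1p+\tfrac1q\bigr)/\lambda$. Thus the inclusion (for these test functions) forces
\[
  \frac{2}{q}\le\frac{1}{\lambda}\Bigl(\frac1p+\frac1q\Bigr),
  \qquad\text{i.e.}\qquad
  \lambda\le\frac12+\frac{q}{2p}<1.
\]
Concretely, with $q=1$, $p=2$, $\lambda=0.9$ and $a=1.9$ one has $f\in A^1$ but $f\notin A^{1.8,\,0.9}$. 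So no argument can close your final step for $\lambda$ close to $1$; the minor slip (the exponent should be $q/p$, not $q/(\lambda p)$) is beside the point. Your easy case $\lambda p\le q$ already yields the inclusion for all $\lambda\le q/p$, and more generally your sub-mean-value scheme can be pushed up to the threshold $\lambda<\tfrac12+\tfrac{q}{2p}$, but not beyond. The lemma as printed appears to be misstated; for the use made of it later in the paper, a version valid only for sufficiently small $\lambda$ (or ``for some $\lambda<1$'') would likely suffice, and that you have proved.
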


If $p<q$ we can deduce from this is the following: $\Gamma$ is a
set of uniqueness for $A^r_\alpha$, for some $r<p$, if and only if it is
a set of uniqueness for $A^{\lambda p,\lambda q}$, for some $\lambda<1$.

If $q<p$ we can deduce: $\Gamma$ is a set of uniqueness for $A^r$, for
some $r<q$, if and only if it is a set of uniqueness for $A^{\lambda
p,\lambda q}$, for some $\lambda<1$.

The main result of \cite{luecking2} then implies that the density
criterion is equivalent to the following condition: there exist
$\lambda<1$ such that the support of every measure in $W_\mu$ is a set
of uniqueness for $A^{\lambda p,\lambda q}$. So our proof of necessity
requires the following:

\begin{theorem}
Suppose for every $\lambda < 1$ there exists a measure in $W_\mu$ whose
support is a zero set for $A^{\lambda p,\lambda q}$. Then $\Gamma$ is
not a sampling sequence for $A^{p,q}$.
\end{theorem}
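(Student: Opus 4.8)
The plan is to argue by contradiction, following the proof of the necessity half of the sampling theorem for Bergman spaces in \cite{luecking2}; the substitutes for the conformal-invariance steps available there are Lemma~\ref{perturbation} (rescaling of mixed-norm zero sequences), Lemma~\ref{lem:stability} (stability of sampling), and the two forms of the sandwich lemma above, namely $A^p_\alpha\subset A^{p,q}\subset A^{\lambda p}_\alpha$ when $p<q$ (with $\alpha=p/q-1$) and $A^{p,q}\subset A^q\subset A^{\lambda p,\lambda q}$ when $q<p$, both valid for every $\lambda<1$. Suppose, contrary to the conclusion, that $\Gamma$ is a sampling sequence for $A^{p,q}$. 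Then $\Gamma$ is a Carleson sequence, and since the bounded-density property is M\"obius-invariant and is inherited by weak limits of measures with a common constant, every $\mu_\phi$ and every measure in $W_\mu$ is a Carleson measure. Using the standing hypothesis, for each $\lambda<1$ choose $\nu=\nu_\lambda\in W_\mu$, say $\nu=\lim_k\mu_{\phi_k}$ for M\"obius maps $\phi_k$, whose support $S=\supp\nu$ is a Carleson zero sequence for $A^{\lambda p,\lambda q}$.

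Next I rescale this zero sequence. Let $\tau=\tau_\lambda$ be the radial dilation of $\ID$ determined by $1-|\tau(z)|^2=\lambda(1-|z|^2)$; it is injective, it satisfies $\rho(z,\tau(z))\le\sup_{w\in\ID}\rho(w,\tau(w))\to0$ as $\lambda\to1$, and its restriction to $S$ gives a one-to-one correspondence of $S$ with $S':=\tau(S)$ obeying the hypotheses of Lemma~\ref{perturbation} with dilation parameter $\gamma=\lambda$ and exponents $(\lambda p,\lambda q)$. That lemma then shows $S'$ is a zero sequence for $A^{(\lambda p)/\lambda,\,(\lambda q)/\lambda}=A^{p,q}$, so we may fix $F\in A^{p,q}$, $F\not\equiv0$, vanishing on $S'$ to the required order.

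The contradiction will come from showing that $S'$ is nonetheless a set of uniqueness for $A^{p,q}$ once $1-\lambda$ is below a threshold independent of $\nu$. Because $\tau$ is a uniformly small pseudohyperbolic perturbation of the identity, each $\tau(\phi_k(\Gamma))$ is a uniformly small perturbation of the M\"obius translate $\phi_k(\Gamma)$, and $S'=\lim_k\tau(\phi_k(\Gamma))$. Transporting the sampling inequality for $\Gamma$ along the maps $\phi_k$ is the delicate step, since $A^{p,q}$ is not conformally invariant; this is exactly where the sandwich is used, performing the M\"obius change of variables in the flanking Bergman space, where composition with $\phi_k$ is an isometry after multiplication by the appropriate power of $\phi_k'$, and then squeezing the resulting estimate back into $A^{p,q}$ at the cost of a loss that vanishes as $\lambda\to1$. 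Together with Lemma~\ref{lem:stability}, this makes each $\tau(\phi_k(\Gamma))$ a sampling sequence for $A^{p,q}$ with constants uniform in $k$ once $1-\lambda$ is small, and a weak-limit argument of the type that yields Lemma~\ref{lueckinginequality} then forces the limit sequence $S'$ to carry a lower sampling bound for $A^{p,q}$, in particular to be a uniqueness set for $A^{p,q}$. This contradicts the choice of $F$, so no measure in $W_\mu$ can have support a zero sequence for $A^{\lambda p,\lambda q}$ for $\lambda$ that close to $1$, contradicting the hypothesis. The cases $q<p$ and $p<q$ run identically, differing only in which form of the sandwich is invoked. The step I expect to be the main obstacle is precisely this conformal transfer across the $\phi_k$, and the sandwiching between Bergman spaces is the device that circumvents the failure of conformal invariance of $A^{p,q}$.
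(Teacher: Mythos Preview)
Your overall architecture—argue by contradiction, pick $\nu\in W_\mu$ whose support is an $A^{\lambda p,\lambda q}$ zero set, apply Lemma~\ref{perturbation} to push it up to an $A^{p,q}$ zero set, and then derive a clash with sampling—is exactly the route the paper takes: it simply declares the proof ``essentially identical'' to Theorem~5.1, (a)$\Rightarrow$(b), of \cite{luecking2}, with Lemmas~\ref{lem:stability}, \ref{perturbation}, and the $k_\Z$ characterization (the mixed-norm Lemma replacing Theorem~3.8 of \cite{luecking2}) as the only substitutions. The sandwich lemma is \emph{not} invoked inside that argument; in the paper it is used only beforehand, to identify the condition ``$\exists\,\lambda<1$ such that every support in $W_\mu$ is a uniqueness set for $A^{\lambda p,\lambda q}$'' with the known Bergman-space condition and hence with the density criterion.

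The place your sketch departs from the paper is your attempt to use the sandwich to transport the $A^{p,q}$ sampling inequality along the $\phi_k$, and this step has a real gap. The inclusions $A^p_\alpha\subset A^{p,q}$ and $A^{p,q}\subset A^{\lambda p}_\alpha$ (and their $q<p$ counterparts) give norm comparisons in one direction only; they do not let you start from a lower sampling bound for $\Gamma$ in $A^{p,q}$, pass to a Bergman space, apply the M\"obius isometry there, and come back to a lower sampling bound for $\phi_k(\Gamma)$ in $A^{p,q}$ with constants independent of $k$. Concretely, to bound $\norm{f}_{A^{p,q}}$ from below by data on $\phi_k(\Gamma)$ you would need, for $g=f\circ\phi_k^{-1}$ suitably weighted, control of $\norm{g}_{A^{p,q}}$ in terms of $\norm{f}_{A^{p,q}}$, and neither inclusion supplies that. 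The phrase ``a loss that vanishes as $\lambda\to1$'' is doing work that has not been justified.

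In the paper's (i.e., \cite{luecking2}'s) argument this obstacle is avoided because one never tries to show that $\phi_k(\Gamma)$ is sampling for the function space. The M\"obius maps act instead on the \emph{data}: on the measures $\mu_{\phi_k}$ (by definition of $W_\mu$) and on the explicit function $\exp[k_\Z-h]$ coming from the zero-set characterization, where $k_\Z$ and the harmonic $h$ transform in a controlled way under $\phi_k$ regardless of which $L(p,q)$ norm one is ultimately computing. That is why the mixed-norm analogues of Lemmas~\ref{lem:stability} and \ref{perturbation} together with the $k_\Z$ lemma are sufficient, and why the paper can say the proof carries over verbatim. If you rewrite your step~4 so that you build the contradicting function directly from the $k_\Z$ description and use weak convergence of the $\mu_{\phi_k}$, rather than trying to prove $\phi_k(\Gamma)$ is $A^{p,q}$-sampling, your outline becomes the paper's proof.
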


Finally, the proof of this theorem is essentially identical to that of
the corresponding result in \cite{luecking2}: Theorem~5.1,
(a)${}\Rightarrow{}$(b).

\providecommand{\bysame}{\leavevmode\hbox to3em{\hrulefill}\thinspace}
\providecommand{\MR}{\relax\ifhmode\unskip\space\fi MR }
% \MRhref is called by the amsart/book/proc definition of \MR.
\providecommand{\MRhref}[2]{%
  \href{http://www.ams.org/mathscinet-getitem?mr=#1}{#2}
}
\providecommand{\href}[2]{#2}


\begin{thebibliography}{10}

\bibitem{benedek}
A.~Benedek and R.~Panzone, \emph{The space {$L\sp{p}$}, with mixed norm}, Duke
  Math. J. \textbf{28} (1961), 301--324. \MR{0126155 (23 \#A3451)}

\bibitem{domanski}
Pawel Domanski and Mikael Lindstrom, \emph{Sets of interpolation and sampling
  for weighted {Bergman} spaces of holomorphic functions}, Annales Polonici
  Mathematici \textbf{79} (2002), no.~3, 223--264.

\bibitem{durenschuster}
Peter Duren and Alexander Schuster, \emph{{Bergman Spaces}}, American
  Mathematical Society, 2004.

\bibitem{forellirudin}
Frank Forelli and Walter Rudin, \emph{Projections on holomorphic functions in
  balls}, Indiana Univ.\ Math.\ J. \textbf{24} (1974), 593--602.

\bibitem{gadbois}
Steve Gadbois, \emph{Mixed-norm generalizations of {B}ergman spaces and
  duality}, Proc. Amer. Math. Soc. \textbf{104} (1988), no.~4, 1171--1180.
  \MR{948149 (89m:46041)}

\bibitem{jevtic}
Miroljub Jevti\'c, Xavier Massaneda, and Pascal~J. Thomas, \emph{{Interpolating
  Sequences for Weighted {Bergman} Spaces of the Ball}}, Invent. Math
  \textbf{43} (1996), no.~3, 495--517.

\bibitem{Korenblum}
Boris Korenblum, \emph{{An Extension of the {Nevanlinna} Theory}}, Acta Math.
  \textbf{135} (1975), no.~3-4, 187--219.

\bibitem{luecking1}
Daniel~H. Luecking, \emph{Forward and reverse {Carleson} inequalities for
  functions in {Bergman} spaces and their derivatives}, American Journal of
  Mathematics \textbf{107} (1985), no.~1, 85--111.

\bibitem{Lue96}
\bysame, \emph{Zero sequences for {B}ergman spaces}, Complex Variables Theory
  Appl. \textbf{30} (1996), 345--362.

\bibitem{luecking2}
\bysame, \emph{Sampling measures for the {Bergman} spaces on the unit disk},
  Math. Ann. \textbf{316} (2000), 659--679.

\bibitem{PKN}
P.~K. Nguyen, \emph{Interpolation and sampling sequences for mixed-norm
  spaces}, Ph.D. thesis, University of Arkansas, 2011, (Order No. 3476095).
  Available from Dissertations \& Theses @ University of Arkansas Fayetteville;
  ProQuest Dissertations \& Theses Global. (894264668). Retrieve from
  \url{http://0-search.proquest.com.library.uark.edu/docview/894264668?accountid=8361}.

\bibitem{schuster}
Alexander Schuster, \emph{{Sampling and Interpolation in {Bergman} Spaces}},
  Ph.D. thesis, The University of Michigan, 1997.

\bibitem{schusterarticle}
\bysame, \emph{{Sets of Sampling and Interpolation in {Bergman} Spaces}}, Proc.
  Amer. Math. Soc. \textbf{125} (1997), no.~6, 1717--1725.

\bibitem{seip}
Kristian Seip, \emph{Beurling type density theorems in the unit disk}, Invent.
  Math. \textbf{113} (1993), no.~1, 21--39. \MR{1223222 (94g:30033)}

\end{thebibliography}
\end{document}